\newcommand{\G}{\mathcal{G}}
\newcommand{\Spec}{\mathrm{Spec}}
\newcommand{\ord}{\mathrm{ord}}
\newcommand{\Mm}{\mathrm{\underline{Max}\; mult}_X}
\newcommand{\mm}{\mathrm{max\, mult}_X}
\newcommand{\Diff}{\mathrm{Diff}}
\newcommand{\Sing}{\mathrm{Sing}}
\DeclareMathOperator{\mult}{mult}
\newcommand{\R}{\mathcal{R}}
\newcommand{\Gn}{\G^{(n)}}
\newcommand{\Gne}{\G^{(n-e)}}
\newcommand{\Gd}{\G^{(d)}}
\newcommand{\Ovne}{\mathcal O_{V^{(n-e)}}}  
\newcommand{\Vn}{V^{(n)}}
\newcommand{\Vne}{V^{(n-e)}}
\newcommand{\Vd}{V^{(d)}}
\newcommand{\Ovd}{\mathcal O_{V^{(d)}}}  
\theoremstyle{plain}
\newtheorem{Thm}{Theorem}[section]         
\newtheorem{Cor}[Thm]{Corollary}
\newtheorem{Prop}[Thm]{Proposition}
\theoremstyle{definition}
\newtheorem{Parrafo}[Thm]{\ }
\newtheorem{Def}[Thm]{Definition}  
\newtheorem{Def-Prop}[Thm]{Definition-Proposition}
\theoremstyle{remark}
\newtheorem{Rem}[Thm]{Remark}
\newtheorem{Ex}[Thm]{Example}
\numberwithin{equation}{Thm} 
\begin{document}

\title{Nash multiplicity sequences and Hironaka's order function}
\author{A. Bravo, S. Encinas, B. Pascual-Escudero
\thanks{The authors were partially supported by MTM2015-68524-P. The third author was supported by BES-2013-062656.}}
%
\begingroup
    \renewcommand{\thefootnote}{}
    \footnotetext{\emph{AMS Subject Classification:} 14E15, 14E18.}
    \footnotetext{\emph{Key words:} Rees algebras. Resolution of Singularities. Arc Spaces.}
\endgroup

\AtEndDocument{\bigskip{\footnotesize
  \textsc{Depto. Matem\'aticas,
Facultad de Ciencias, Universidad Aut\'onoma de Madrid 
and Instituto de Ciencias Matemáticas CSIC-UAM-UC3M-UCM, Canto Blanco 28049 Madrid, Spain} \par  
  \textit{E-mail address}, A. Bravo: \texttt{ana.bravo@uam.es} \par
  \textit{E-mail address}, B. Pascual-Escudero: \texttt{beatriz.pascual@uam.es} \par
  \addvspace{\medskipamount}
  \textsc{Depto. Matem\'atica Aplicada,
and IMUVA, Instituto de Matem\'aticas.
Universidad de Valladolid.
} \par
  \textit{E-mail address}, S. Encinas: \texttt{sencinas@maf.uva.es}
}}

\maketitle
\normalsize

\begin{abstract}
When $X$ is a  $d$-dimensional variety defined over a  field  $k$ of characteristic zero,  a constructive resolution   of singularities   can be achieved by successively lowering  the maximum multiplicity    via blow ups at smooth equimultiple centers.  This is done by stratifying the maximum multiplicity locus of $X$  by means of the so called {\em resolution functions}. The most important of these functions is what we know as {\em Hironaka's order function in dimension $d$}.   Actually, this function can   be   defined for    varieties   when the base field is  perfect; however if the characteristic of $k$ is positive, the function is, in general,  too coarse  and does not provide enough information so as to define a resolution. It is very natural to ask what  the meaning of this function is in this case, and to try to find refinements that could lead, ultimately, to a resolution. In this paper we show that   Hironaka's order function in dimension $d$ can be read in terms of the {\em Nash multiplicity sequences}  introduced by  Lejeune-Jalabert. Therefore, the function is intrinsic to the variety and has a geometrical meaning in terms of its space of arcs. 
\end{abstract} 

\section*{Introduction}

After Hironaka's paper on resolution of singularities (\cite{Hir}),  
the work of J. Nash on the theory of arcs on an algebraic variety  $X$ was in part  motivated by the question of how much of a resolution of singularities of $X$ is intrinsic to the variety itself (\cite{Nash}). In general, a resolution of singularities of a variety is not unique, yet one may be able to identify elements in the space of arcs of $X$ that give some indication on its desingularization. This paper is motivated by this question in the context of algorithmic resolution of singularities. 

\

Let $X$ be an algebraic variety defined over a field of characteristic zero. An algorithmic resolution of singularities of $X$ consists on a procedure to construct a sequence of blow ups at regular centers, 
$$X=X_0\leftarrow X_1 \leftarrow \ldots \leftarrow X_m$$
so that $X_m$ is non singular (see \cite{V1}, \cite{V2}, \cite{B-M}, \cite{E_V97}, \cite{E_V_}, \cite{E_Hau}).  To define such a sequence one needs to stratify the points of $X$ according to the complexity of the singularities. This is done by means of what we know as {\em resolution invariants}. The first measure of the singularity at a given point $\xi\in X$ can be, for example, the multiplicity (see \cite{V}). As it turns out, this number is too coarse  and needs to be refined. Thus more invariants have to be defined:   the next invariant at $\xi\in X$ is known as {\em Hironaka's order function at $\xi$  in dimension $d$}, where $d$ is the dimension of $X$. This is a rational number obtained after describing the multiplicity stratum through $\xi$   by a  set of     equations with weights via some (local) embedding in a smooth $V$ in a neighborhood of $\xi\in X$. We denote it by $\text{ord}_{\xi}^{(d)}(X)$. All other invariants involved in resolution derive from this one (see \cite{E_V97}, \cite{Br_V2}). 

\ 

In \cite{Br_E_P-E} we showed that $\text{ord}_{\xi}^{(d)}(X)$  can be obtained by using the information provided by the arcs on $X$ with center  $\xi$, or more precisely, it can be read from  the so called {\em Nash multiplicity sequences} of arcs with center $\xi$, introduced by Lejeune-Jalabert in \cite{L-J}. Therefore, this number used in algorithmic resolution is indeed intrinsic to $X$. Moreover it       has a geometrical meaning in terms of the  arcs of $X$ with center $\xi$ and the rate at which their graphs {\em separate from the stratum of points with the same multiplicity as $\xi$}.    See Example~\ref{ejemplo_car}.  

\

We    do not know whether there is a  theorem of resolution for varieties  defined over a field of positive characteristic (there are only  positive answers for dimension less than or equal to three, see \cite{Abhy1}, \cite{Abhy2}, \cite{Benito_V}, \cite{CJS_prep}, \cite{Cos_Pilt1}, \cite{Cos_Pilt2},   \cite{MR3617780}, \cite{Lipman3}). However,  it is still  possible to  define Hironaka's order function in any  dimension $d$ at a singular point $\xi\in X$ whenever $X$ is defined over a perfect field. It is very natural to ask what the meaning of this invariant is in this case. 

\

In this manuscript we give a (characteristic free) proof of the fact that this invariant can be read in terms of the Nash multiplicity sequence of arcs with center $\xi\in X$,  extending the results in \cite{Br_E_P-E},  and   giving an interpretation of the meaning of this number in any characteristic.  The strategy followed in the present  paper differs from the one in \cite{Br_E_P-E}, where we strongly used the characteristic zero hypothesis on the base field.  

\ 

In the following paragraphs we give more details on how Hironaka's order function is defined and how the Nash multiplicity sequence of an arc is constructed. 

\

{\bf Arc spaces, singularities, and Nash multiplicity sequences} 

\

The spaces of arcs and jets of an algebraic variety $X$ often encode information about its singularities, and during the last  few decades, they  have been widely studied by several authors (see for instance   \cite{deF_Doc},  \cite{D_L}, \cite{E_M04},   \cite{E_M09},   \cite{E_M_Y}, \cite{I}, \cite{I_K},  \cite{L-J_Mou_Re}, \cite{L-J_Re1},   \cite{Mou4},  \cite{Mus1}, \cite{Mus2}     or \cite{Veys}  among many others).

\

It is in this context of arc spaces where the \textit{Nash multiplicity sequence} appears. It was defined by M. Lejeune-Jalabert \cite{L-J} as a non-increasing sequence of positive integers attached to a germ of a curve inside a germ of a hypersurface.  This sequence of numbers can somehow be interpreted as a refinement of the multiplicity of the hypersurface at a given point: it can be seen as the {\em multiplicity along a given arc}. 

\

 M. Hickel generalized this notion  in  \cite{Hickel05}  by defining  a sequence of blow ups  that allows us to compute Nash multiplicity sequences and study their behaviour for arbitrary varieties. Given a variety $X$ defined over a field $k$, fix an arc $\varphi$ with center a (non-necessarily closed) point   $\xi$ of multiplicity $m$ (which we may assume to be the maximum multiplicity at points of $X$). Now   $\varphi$ naturaly induces another arc  $\Gamma_0$ on $X_0=X\times \mathbb{A}^1_k$ related to its  graph. Then one  can define   a sequence of blow ups at points:
\begin{equation}
\label{dirigido}
\xymatrix@R=0pt@C=30pt{\text{Spec}(K[|t|]) \ar[dddd]^{\Gamma_0} \ar[ddddr]^{\Gamma_1} \ar[ddddrrr]^{\Gamma_r} \\
	   \\   \\  \\ 
	X_0=X\times \mathbb{A}^1_k & X_1 \ar[l]  & \ldots \ar[l] & X_r \ar[l]\mbox{,}\\
	\xi _0=(\xi ,0) & \xi _1 & \ldots & \xi _r 
}\end{equation}

where $\xi _i$ is the center of the arc  $\Gamma_i$,   the lifting  of   $\Gamma_{i-1}$  to $X_i$, for $i=1, \ldots, r$, and $K$ is some field containing $k$. 
 The Nash multiplicity sequence of $\varphi$  is then the sequence
\begin{equation}
	\label{Nash_sequence_3}
	m=m_0\geq m_1\geq \ldots \geq m_r\geq 1\mbox{,}
	\end{equation}
in which $m_i$ is the multiplicity of $X_i$ at $\xi _i$ for $i=0,\ldots ,r$ (see section \ref{RAandNash} for details). We will refer to diagram (\ref{dirigido}) as {\em  the sequence of blow ups directed by $\varphi$}.  

\

In this paper we will be interested in the number of blow ups needed until  the Nash multiplicity drops below $m$ for the first time. This number will be finite whenever the generic point of  $\varphi$ is not contained in the stratum of (maximum) multiplicity $m$ of $X$, $\Mm$.  We will call this the {\em persistance of $\varphi$ in $X$}  and will denote it by $\rho _{X,\varphi }$. In other words,     $\rho _{X,\varphi }$ is such that $m=m_0=\ldots =m_{\rho _{X,\varphi }-1}>m_{\rho _{X,\varphi }}$ in the sequence (\ref{Nash_sequence_3}) above.

\

 We will also define a refinement of  $\rho _{X,\varphi }$, the \textit{order of contact} of $\varphi $ with $\Mm$,   and denote it by 
$r_{X,\varphi }$. This is a rational number whose integral part is $\rho _{X,\varphi }$ (see Proposition \ref{thm:r_elim_amalgama}). Normalizing $r_{X,\varphi }$ by the order of the arc (see Definition \ref{arco_def}) we obtain:
\begin{equation} \label{formula_ri}
\bar{r}_{X,\varphi }:=\frac{r_{X,\varphi }}{\nu_t(\varphi )}\in \mathbb{Q}_{\geq 1}\mbox{,}
\end{equation}
and 
\begin{equation}\label{eq:set_ri}
\Phi _{X,\xi }=\left\{ \overline{r}_{X,\varphi } \right\} _{\varphi }\subset \mathbb{Q}_{\geq 1}\mbox{,}
\end{equation}
where $\varphi $ runs over all arcs in $X$ with center $\xi $. Note that the set 
$\Phi _{X,\xi }$ is an invariant of $X$ at $\xi$. As we will see, the infimum (actually the minimum) of this set is related to Hironaka's order function.

\

{\bf Algorithmic resolution, local presentations, and Hironaka's order function}

\

Let $X$ be an algebraic variety defined over a perfect field $k$. One way to approach an algorithmic resolution of singularities of an algebraic variety $X$ is by classifying its singular points according to their complexity. As a first step one can consider the multiplicity at each point  of $X$ (recall that an irreducible algebraic variety is regular if and only if the multiplicity at each point equals one).   This defines an upper semicontinuous function: 
$$\begin{array}{rrcl}
\text{mult}_X:  & X & \longrightarrow & {\mathbb N} \\
& \xi & \mapsto & \text{mult}_{{\mathfrak m}_{\xi}} {\mathcal O}_{X,\xi}.
\end{array}$$
In what follows, we will denote by $\text{max mult}_X$ the maximum value of $\text{mult}_X$, and by $ \text{\underline{Max} mult}_X$ the closed set of points in $X$ where this maximum is achieved. The multiplicity function has the following nice property: if $Y\subset \text{\underline{Max} mult}_X$ is a regular center, then after blowing up at $Y$, $X\leftarrow X_1$,  one has that  $\text{max mult}_{X}\geq  \text{max mult}_{X_1}$ (see \cite{Dade}). Thus one could try to approach a resolution of singularities of $X$ by  finding a finite sequence of blow ups 
\begin{equation}
\label{bajada}
X=X_0 \leftarrow X_1\leftarrow \ldots \leftarrow X_{m-1} \leftarrow X_m
\end{equation} 
at suitable equimultiple centers $Y_i\subset \text{\underline{Max} mult}_{X_i}$
so that 
\begin{equation}
\label{bajada_m}
\text{max mult}_{X_0}=\text{max mult}_{X_1}=\ldots =\text{max mult}_{X_{m-1}}>\text{max mult}_{X_m}.
\end{equation}

\

A sequence like (\ref{bajada}) is said to be a {\em simplification of the multiplicity of $X$}. Iterating this procedure one achieves the case in which $\max\mult_{X_N}=1$ for some $N$, and this is equivalent to    $X_N$ being    regular.  
In general, for a given $X$, $\text{\underline{Max} mult}_X$ is not regular, so, in order to define the centers $Y_i$  in (\ref{bajada}) one is forced to refine the multiplicity by considering additional information on $X$. This is usually done by considering {\em local presentations for the multiplicity}. 

\

Roughly speaking, a local presentation  for the multiplicity consists of a local (\'etale) embedding of $X$ into a smooth variety $V$, in a neighborhood of $\xi\in\text{\underline{Max} mult}_X$,  together with a finite set of {\em weighted equations} whose set of zeroes coincides with 
$\text{\underline{Max} mult}_X$, and so that this description is {\em stable by blow ups at regular equimultiple centers}, at least if the maximum multiplicity of the transforms of $X$ remains constant. 

\

To clarify this statement a bit, we can think of the case where $X\subset V $ is locally a hypersurface defined by some element $f\in {\mathcal O}_V$. Then the multiplicity of $X$ at a point $\xi$ (say $m$) is given by the usual order of $f$ at the regular local ring ${\mathcal O}_{V,\xi}$, and therefore, at least locally:
$$\text{\underline{Max} mult}_X=\{\eta\in X: \text{ord}_{\eta} f\geq  m\}.$$

In \cite{V} it is shown that if $X$ is an arbitrary variety of dimension $d$ defined over a perfect field then locally, in an (\'etale) neighborhood of $\xi\in\text{\underline{Max} mult}_X$, there is an embedding in a smooth scheme $V$, elements $f_1,\ldots, f_r\in {\mathcal O}_V$ and positive integers 
$n_1, \ldots, n_r$ so that: 
\begin{enumerate}
	\item[(i)]   The subset $\text{\underline{Max} mult}_X$ can be expressed in terms of the hypersurfaces defined by the $f_i$:
	\begin{equation}
	\label{presentacion_local_1}
	\text{\underline{Max} mult}_X=\cap_{i=1}^r \{\eta\in V: \text{ord}_{\eta}f_i\geq n_i\}, 
	\end{equation}
	where $n_i$ is the maximum multiplicity of $f_i$ for $i=1,\ldots, r$; 
	\item[(ii)] The previous description is stable under blow ups at regular centers $Y\subset \text{\underline{Max} mult}_X$, i.e., if $V\leftarrow V_1$ is the blow up at $Y$, $X_1$ is the strict transform of $X$ and $f_{i,1}$ denotes the strict transform of $f_i$ in $V_i$, then $\text{max mult}_{X_1}=\text{max mult}_{X}$ if and only if 
	$$\cap_{i=1}^r \{\eta\in V: \text{ord}_{\eta}f_{i,1}\geq n_i\}\neq \emptyset$$ 
	and in this case:
	$$\text{\underline{Max} mult}_{X_1}=\cap_{i=1}^r \{\eta\in V: \text{ord}_{\eta}f_{i,1}\geq n_i\}.$$
\end{enumerate}

The embedding $X\subset V$ together with the expression (\ref{presentacion_local_1}) is what we call a {\em local presentation for the multiplicity} (see section \ref{Presentaciones_locales} for a more precise definition of what a local presentation is).  

\

Rees algebras turn out to be a convenient  tool to codify the information in a local presentation (equations and weights). It is in terms of  Rees algebras that {\em Hironaka's order function in dimension $d$} is defined, $\ord^{(d)}(X)$. This is the most important invariant in constructive resolution of singularities in characteristic zero.

\

When the characteristic of the base field is zero, it can be shown that, in fact, one can find a suitable (finite) projection to a smooth $d$-dimensional space $V'$, say $X\to V'$, and a collection of equations and weights on $V'$ that also give a local presentation of (a homeomorphic image of) the maximum multiplicity locus of $X$ 
(see \ref{EliminationProperties}). This means that $\Mm$ can be {\em represented in dimension $d$},  and this is done via a conveniently defined ${\mathcal O}_{V'}$-Rees algebra: {\em the elimination algebra} (\ref{eliminacion}). The key point is that the local presentation is stable after transformations (\ref{eliminacion_suclocal} (3)).

\

When the characteristic of the base field is positive, a finite projection as before, $X\to V'$, can be defined, and it is also possible to give a collection of equations and weights that somehow {\em approximate}  (a homemorphic image of) $\Mm$ in $V'$, again via a conveniently defined ${\mathcal O}_{V'}$-Rees algebra which we also  refer to as {\em the elimination algebra}. Therefore, we can also define Hironaka's order function in dimension $d$, $\ord^{(d)}(X)$. However,  in this context this invariant is too coarse  and does not provide enough information  to define a simplification of the multiplicity of $X$. In particular, in this case the local presentation is not stable after transformations (\ref{Inclusion_Estricta}). It is very natural to ask what the meaning of Hironaka's order function is in this case. In addition it would be very interesting to find new invariants that help refining $\ord^{(d)}(X)$. 

\

{\bf About the results in this paper}

\

The contents of this paper are motivated by  the previous question. 
In \cite{Br_E_P-E} we showed that, when the characteristic is zero,   $\ord^{(d)}_{\xi}(X)$  can be read by means of the Nash multiplicity sequence of arcs through the point $\xi\in X$. There, we strongly used  the hypothesis on the characteristic, since Tschirnhausen transformations played  a key  role in our arguments, the reason being that the  elimination algebra in this case can be constructed using  the coeficients of the elements $f_i$  (see (\ref{presentacion_local_1})) after a Tschirnhausen transformation (in a suitable \'etale neighborhood).  Here we give a unified proof of the same result over arbitrary perfect fields using the fact that in arbitrary characteristic there is a strong link between the elements $f_i$ and the elimination algebra (see \ref{eliminacion_finita}). This is the content of Theorem \ref{Main_Theorem}:

\

{\bf Theorem \ref{Main_Theorem}.}  {\em Let $X$ be an algebraic variety of dimension $d$ defined over a perfect field $k$, and let $\xi $ be a point in $\Mm $. Consider the set  $ \Phi _{X,\xi }$   defined in (\ref{eq:set_ri}). Then:
	$$\mathrm{inf} \Phi _{X,\xi }=\mathrm{min} \Phi _{X,\xi }=\ord _{\xi }^{(d)}(X)\mbox{.}$$}

Thus, it follows that $\ord _{\xi }^{(d)}(X)$    is intrinsic to $X$ and it can be read from the arcs in $X$ centered at $\xi$. In fact, it can be read from the persistance of  some   arc in $X$ (see (\ref{limite_ord_d})).  Moreover, the Theorem indicates  that it somehow measures how long it takes  at least  for an arc $\Gamma _0$  arising from $\varphi $ as explained before  to leave the maximum multiplicity stratum of $X\times \mathbb{A}^1_k$ after a suitable sequence of blow ups as in (\ref{dirigido}), giving this way a geometrical  meaning to Hironaka's order function in dimension $d$ in any characteristic. See Example \ref{ejemplo_car}.

\

{\bf How the paper is organized}  

\

In section \ref{Rees_Algebras} we recall the basics on Rees algebras when we use them as a tool in constructive resolution of singularities. As we will see, Rees algebras provide a convenient language when it comes to handling local presentations for the multiplicity, which is the content of section \ref{Presentaciones_locales}. Section \ref{ElimAlg}  is dedicated to \textit{elimination}: given a $d$-dimensional variety $X$ defined over a perfect field, a local presentation of $\Mm$ can be given by means of an embedding in a smooth scheme $V$, and a collecction of a finite set of equations with weights in $V$. However, in may situations, it is possible to give a local presentation of a homeomorphic image of $\Mm$ in some smooth $d$-dimensional  scheme. This can be done using the theory of elimination. Jets and arcs   are introduced in section 
\ref{Jet_Arcs}, while the notion of Nash multiplicity sequence, the persistance and the order of contact are given in section \ref
{RAandNash}. Finally, Theorem \ref{Main_Theorem} is proven in section \ref{Nash_Hironaka}. 

\vspace{0.2cm}

{\em Acknowledgements.} We profited from conversations with C. Abad, A. 
Benito and O. E. Villamayor.

\section{Rees algebras}\label{Rees_Algebras}

The stratum  defined by the  maximum value of the multiplicity function of a variety can be encoded using equations and weights.
The same occurs with the Hilbert-Samuel function.
Rees algebras are natural objects to work with this setting, with the advantage that we can  perform algebraic operations on them such as taking the  integral  closure or the saturation by the action of  differential operators.   

\begin{Def}
Let $R$ be a Noetherian ring. A \textit{Rees algebra $\mathcal{G}$ over $R$} is a finitely generated graded $R$-algebra
$$\mathcal{G}=\bigoplus _{l\in \mathbb{N}}I_{l}W^l\subset R[W]$$
for some ideals $I_l\in R$, $l\in \mathbb{N}$ such that $I_0=R$ and $I_lI_j\subset I_{l+j}\mbox{,\; } \forall l,j\in \mathbb{N}$. Here, $W$ is just a variable in charge of the degree of the ideals $I_l$. Since $\mathcal{G}$ is finitely generated, there exist some $f_1,\ldots ,f_r\in R$ and positive integers (weights) $n_1,\ldots ,n_r\in \mathbb{N}$ such that
\begin{equation}\label{def:Rees_alg_generadores}
\mathcal{G}=R[f_1W^{n_1},\ldots ,f_rW^{n_r}]\mbox{.}
\end{equation}
\end{Def}

Note that this definition is more general than the (usual) one considering only algebras of the form $R[IW]$ for some ideal $I\subset R$, which we call Rees rings, where all generators have weight one.

\vspace{0.4cm}

Rees algebras can be defined over Noetherian schemes in the obvious manner.

\vspace{0.2cm}

\textbf{Notation:} Let $\mathcal{G}_1,\mathcal{G}_2\subset R[W]$ be two Rees algebras. We denote by $\mathcal{G}_1\odot \mathcal{G}_2$ the smallest Rees algebra containing both. If $\mathcal{G}_1'\subset R_1[W]$, $\mathcal{G}_2'\subset R_2[W]$ for two different rings $R_1, R_2$, by abuse of notation we will sometimes denote by $\mathcal{G}'_1\odot \mathcal{G}_2'$ the Rees algebra $\mathcal{G}_1\odot \mathcal{G}_2$, where $\mathcal{G}_i$, for $i=1,2$, is the extension of $\mathcal{G}_i'$ to a Rees algebra over some  ring $R$ containing both $R_1$ and $R_2$. 

\vspace{0.2cm}

\begin{Parrafo}{\bf Notation and assumptions.} In what follows,  we will assume $k$ to be a perfect field. In general, $R$ will be a smooth $k$-algebra, and $V$ will be a smooth scheme over $k$, unless otherwise specified. We will often work locally: for many computations, we will assume that we fix a point and an open subset of $V$ containing it, so that we can reduce  to the affine case, $V=\mathrm{Spec}(R)$.
\end{Parrafo}
\vspace{0.2cm}

One can attach to a Rees algebra a closed set as follows:

\begin{Parrafo}
\textbf{The Singular Locus of a Rees Algebra.} (\cite[Proposition 1.4]{E_V}).
Let $\mathcal{G}$ be a Rees algebra over $V$. The \textit{singular locus} of $\mathcal{G}$, Sing$(\mathcal{G})$, is the closed set given by all the points $\xi \in V$ such that $\nu _{\xi }(I_l)\geq l$, $\forall l\in \mathbb{N}$, where $\nu _{\xi}(I)$ denotes the order of the ideal $I$ in the regular local ring $\mathcal{O}_{V,\xi }$. If $\mathcal{G}=R[f_1W^{n_1},\ldots ,f_rW^{n_r}]$, the singular locus of $\mathcal{G}$ can be computed as
$$\mathrm{Sing}(\mathcal{G})=\left\{ \xi \in \mathrm{Spec}(R):\, \nu _{\xi }(f_i)\geq n_i,\; \forall i=1,\ldots ,r\right\} \subset V\mbox{.}$$
\end{Parrafo}

Note that the singular locus of the Rees algebra on $V$ generated by $f_1W^{n_1}, \ldots ,f_rW^{n_r}$ does not coincide with the usual definition of the singular locus of the subvariety of $V$ defined by $f_1,\ldots ,f_r$.

\begin{Ex} \label{Ex:HiperRees}
Let $X\subset\Spec(R)=V$ be a hypersurface with $I(X)=(f)$ and let $b>1$ be the maximum value of the multiplicity of $X$.
If we set $\mathcal{G}=R[fW^b]$ then $\Sing(\mathcal{G})=\Mm$ is the set of points of $X$ having maximum multiplicity (see \ref{Def:LocPres} and  Theorem \ref{Th:PresFinita}   for a generalization of this description in the case where $X$ is an arbitrary algebraic variety with maximum multiplicity greater than 1). 
\end{Ex}

\begin{Rem}
Let $\mathcal{G}_1$ and $\mathcal{G}_2$ be two Rees algebras over $V$, then
$$\mathrm{Sing}\left( \mathcal{G}_1\odot \mathcal{G}_2 \right) =\mathrm{Sing}(\mathcal{G}_1)\cap \mathrm{Sing}(\mathcal{G}_2)\subset V\mbox{.}$$
\end{Rem}

\begin{Def}\label{def:transf_law}
Let $\mathcal{G}$ be a Rees algebra on $V$. A \textit{$\mathcal{G}$-permissible blow up}
$$V\stackrel{\pi }{\leftarrow} V_1\mbox{,}$$
is the blow up of $V$ at a smooth closed subset $Y\subset V$ contained in $\mathrm{Sing}(\mathcal{G})$ (a {\em permissible center for $\mathcal{G}$}). We denote then by $\mathcal{G}_1$ the (weighted) transform of $\mathcal{G}$ by $\pi $, which is defined as
$$\mathcal{G}_1:=\bigoplus _{l\in \mathbb{N}}I_{l,1}W^l\mbox{,}$$
where 
\begin{equation}\label{eq:transf_law}
I_{l,1}=I_l\mathcal{O}_{V_1}\cdot I(E)^{-l}
\end{equation}
for $l\in \mathbb{N}$ and $E$ the exceptional divisor of the blow up $V\longleftarrow V_1$.
\end{Def}

  As we will see in section \ref{Presentaciones_locales}, the problem of simplification of the maximum multiplicity of an algebraic variety can  be translated into the problem of {\em resolution of a suitably defined Rees algebra}. This motivates the following definition (see also Example \ref{ejemplo_hipersuperficie} below).

\begin{Def}\label{def:res_RA}
Let $\mathcal{G}$ be a Rees algebra over $V$. A \textit{resolution of $\mathcal{G}$} is a finite sequence of transformations
\begin{equation}\label{diag:res_Rees_algebra}
\xymatrix@R=0pt@C=30pt{
V=V_0 & V_1 \ar[l]_>>>>>{\pi _1} & \ldots \ar[l]_{\pi _2} & V_l \ar[l]_{\pi _l}\\
\mathcal{G}=\mathcal{G}_0 & \mathcal{G}_1 \ar[l] & \ldots \ar[l] & \mathcal{G}_l \ar[l]
}\end{equation}
at permissible centers $Y_i\subset \text{Sing} ({\mathcal G}_i)$, $i=0,\ldots, l-1$, such that $\mathrm{Sing}(\mathcal{G}_l)=\emptyset$, and such that the exceptional divisor of the composition $V_0\longleftarrow V_l$ is a union of hypersurfaces with normal crossings. Recall that a set of hypersurfaces $\{H_1,\ldots, H_r\}$ in a smooth $n$-dimensional $V$  has normal crossings at a point $\xi\in V$ if there is a regular system of parameters $x_1,\ldots, x_n\in {\mathcal O}_{V, \xi}$ such that if $\xi\in H_{i_1}\cap \ldots \cap H_{i_s}$,  and $\xi \notin H_l$ for $l\in \{1,\ldots, r\}\setminus \{i_1,\ldots,i_s\}$, then
 ${\mathcal I}(H_{i_j})_{\xi}=\langle x_{i_j}\rangle$  for $i_j\in \{i_1,\ldots, i_s\}$;
we say that $H_1,\ldots, H_r$ have normal crossings in V if they have normal crossings at each point of $V$. 
\end{Def}

\begin{Ex}\label{ejemplo_hipersuperficie}
With the setting of Example~\ref{Ex:HiperRees}, a resolution of the Rees algebra $\mathcal{G}=R[fW^b]$ gives a sequence of transformations such the multiplicity of the strict transform of $X$  has decreased:
\begin{gather*}
\xymatrix@R=0pt@C=30pt{
\mathcal{G}=\mathcal{G}_0 & \mathcal{G}_1 \ar[l] & \ldots \ar[l] & \mathcal{G}_{l-1} \ar[l] & \mathcal{G}_l \ar[l]\\
V=V_0 & V_1 \ar[l]_>>>>>{\pi _1} & \ldots \ar[l]_{\pi _2} & V_{l-1}\ar[l]_{\pi_{l-1}} & V_l \ar[l]_{\pi _l}\\
\  \; \; \; \; \; \cup & \cup & & \cup & \cup \\
X=X_0 & X_1 \ar[l]_>>>>>{\pi _1} & \ldots \ar[l]_{\pi _2} & X_{l-1} \ar[l]_{\pi_{l-1}} & X_l \ar[l]_{\pi _l}
} \\
b=\max\mult(X_0) =  \max\mult(X_1) =\cdots = \max\mult(X_{l-1})>\max\mult(X_l).
\end{gather*}
Here  each $X_i$ is the strict transform of $X_{i-1}$.
Note that the set of points of $X_l$ having multiplicity $b$ is $\Sing(\mathcal{G}_{l})=\emptyset$ (see \ref{LocalPresMult}).
\end{Ex}

\begin{Rem}
Resolution of Rees algebras is known to exists when $V$ is defined over a field of characteristic zero  (\cite{Hir}, \cite{Hir1}). In \cite{V1} and \cite{B-M} different algorithms of resolution of Rees algebras are presented (see also \cite{E_V97}, \cite{E_Hau}). An algorithmic  resolution  requires the definition of invariants associated to the points of the singular locus of a given Rees algebra so as to define a stratification of this closed set. The most important invariant involved in the resolution process is {\em Hironaka's order function}. 
\end{Rem}

\begin{Parrafo}\label{Def:HirOrd}
\textbf{Hironaka's order of a Rees Algebra.}
(\cite[Proposition 6.4.1]{E_V}) Let $\G$ be an ${\mathcal O}_V$-Rees algebra. 
We define the \textit{order of an element $fW^{n}\in \mathcal{G}$ at $\xi \in \mathrm{Sing}(\mathcal{G})$} as
$$\mathrm{ord}_{\xi }(fW^{n}):=\frac{\nu _{\xi }(f)}{n}\mbox{.}$$
We define the \textit{order of the Rees algebra $\mathcal{G}$ at $\xi \in \mathrm{Sing}(\mathcal{G})$} as the infimum of the orders of the elements of $\mathcal{G}$ at $\xi$, that is
$$\mathrm{ord}_{\xi }(\mathcal{G}):=\inf _{l\geq 0}\left\{ \frac{\nu_{\xi }(I_l)}{l}\right\} \mbox{.}$$
 This is what we call {\em Hironaka's order function of $\G$ at the point $\xi$}. 
If $\mathcal{G}=R[f_1W^{n_1},\ldots ,f_rW^{n_r}]$ and $\xi\in \mathrm{Sing}(\mathcal{G})$ then it can be shown (see \cite[Proposition 6.4.1]{E_V}) that: 
$$\mathrm{ord}_{\xi }(\mathcal{G})=\min _{i=1\ldots r}\left\{ \mathrm{ord}_{\xi }(f_iW^{n_i})\right\} \mbox{.}$$
\end{Parrafo}

The following two definitions  correspond to operations that  can be performed on  a given Rees algebra without changing  the singular locus   and Hironaka's order function. In fact, as we will see, Rees algebras linked by the these operations share the same algorithmic resolution (at least in characteristic zero). 

\begin{Def} \label{Diff_saturation} 
A Rees algebra $\mathcal{G}=\oplus _{l\geq 0}I_lW^l$ over $V$ is \textit{differentially closed} (or a \textit{Diff-algebra}) if there is an affine open covering $\{U_i\}_{i\in I}$ of $V$,  such that for every $D\in \mathrm{Diff}^{r}(U_i)$ and $h\in I_l(U_i)$, we have $D(h)\in I_{l-r}(U_i)$ whenever $l\geq r$ (where $\mathrm{Diff}^{r}(U_i)$ is the locally free sheaf of $k$-linear differential operators of order less than or equal to $r$). In particular, $I_{l+1}\subset I_l$ for $l\geq 0$. We denote by $\mathrm{Diff}(\mathcal{G})$ the smallest differential Rees algebra containing $\mathcal{G}$ (its \textit{differential closure}). (See \cite[Theorem 3.4]{V07} for the existence and construction.)   If $\beta: V\to V'$ is a smooth morphism, then we will say that $\G$ has a  {\em  $\beta$-relative differential} structure if $\G$ is closed by the action of the relative differential operators in  $\Diff_{V/V'}$.  
\end{Def}

\begin{Rem}(\cite[proof of Theorem 3.4]{V07}) If $\mathcal{G}$ is a Rees algebra over a smooth $V$, locally generated by a set $\left\{ f_1W^{n_1},\ldots ,f_rW^{n_r} \right\} \subset \mathcal{G}$, then $\mathrm{Diff}(\mathcal{G})$ is (locally) generated by the set 
$$\left\{ D(f_i)W^{n_i-\alpha } : D\in \mathrm{Diff}^{\alpha }\mbox{,\, }0\leq \alpha <n_i\mbox{,\, }i=1,\ldots ,r\right\} \mbox{.}$$
\end{Rem}

\vspace{0.2cm}

\begin{Def}
Two Rees algebras over a ring $R$ (not necessary smooth) are \textit{integrally equivalent} if their integral closure in $\mathrm{Quot}(R)[W]$ coincide. We say that a Rees algebra over $R$, $\mathcal{G}=\oplus _{l\geq 0}I_lW^l$ is \textit{integrally closed} if it is integrally closed as an $R$-ring in $\mathrm{Quot}(R)[W]$. We denote by $\overline{\mathcal{G}}$ the integral closure of $\mathcal{G}$.
\end{Def}

\begin{Rem} \label{order_int_diff} If $R$ is smooth over a perfect field $k$, then 
for a Rees algebra $\mathcal{G}\subset R[W]$ we have that
$\Sing(\mathcal{G})=\Sing(\overline{\mathcal{G}})=\Sing(\Diff(\mathcal{G}))$   (see \cite[Proposition 4.4 (1), (3)]{V3}). 
In fact for any point $\xi\in\Sing(\mathcal{G})$ we have
$\ord_{\xi}(\mathcal{G})=\ord_{\xi}(\overline{\mathcal{G}})=\ord_{\xi}(\Diff(\mathcal{G}))$  
(see   \cite[Remark 3.5, Proposition 6.4 (2)]{E_V}).       
\end{Rem}

\section{Local presentations} \label{Presentaciones_locales}

Let $X$ be an equidimensional algebraic variety of dimension $d$   defined over a perfect field $k$.
Consider the multiplicity function
\begin{align*}
\mult_X: X & \longrightarrow \mathbb{N} \\
\xi & \longrightarrow \mult_X(\xi)=\text{mult}_{{\mathfrak m}_{\xi}} {\mathcal O}_{X,\xi}
\end{align*}
where $\text{mult}_{{\mathfrak m}_{\xi}} {\mathcal O}_{X,\xi}$ stands for the multiplicity of the local ring $\mathcal{O}_{X,\xi}$ at the maximal ideal $\mathfrak{m}_{\xi}$.

It is known that the function $\mult_X$ is upper-semi-continuous (see \cite{Dade}). In particular, if $m=\max\mult_X$ is the maximum value of the multiplicity of $X$ then the set
$$\mathrm{\underline{Max}}\mult_X=\left\lbrace \xi \in X \mid \mult_X(\xi)\geq m\right\} =
\left\{ \xi \in X\mid \mult_{X}(\xi)=m\right\rbrace $$
is closed. It is also known  that the multiplicity function can not increase after a blow up $\phi:X'\to X$ with   regular   center $Y$
provided that $Y\subset \mathrm{\underline{Max}}\mult_X$ (cf. \cite{Dade}).  This means that $\mult_{X'}(\xi')\leq \mult_X(\xi)$ for  
$\xi=\phi(\xi')$, $\xi '\in X'$.

One could try to approach  a resolution of singularities by defining a sequence of blow ups   at regular equimultiple centers  
\begin{equation}
\label{simple_1}
\xymatrix{X=X_0 & X_1\ar[l] & \ar[l] \ldots & \ar[l] X_{l-1} & \ar[l] X_l}
\end{equation}
so that 
\begin{equation}
\label{simple_2}
m=\max\mult_{X_0}=\max\mult_{X_1} =\ldots =\max\mult_{X_{l-1}} >\max\mult_{X_l}. 
\end{equation}
  A sequence like (\ref{simple_1})  with the property (\ref{simple_2})  is a {\em simplification of the multiplicity of $X$}.  
\medskip

A {\em local presentation for the multiplicity}  is an expression of  the closed set $\left\{ \xi \in X\mid \mult_{X}(\xi)=m\right\rbrace$ in terms of the maximum multiplicity locus of a suitably chosen finite set of hypersurfaces defined in a smooth ambient space. This information is much   easier to handle (see Theorem \ref{Th:PresFinita} and \ref{PresFinita}).
These hypersurfaces will be defined in a suitable embedding of $X$ in a smooth space $V$.
Moreover we will require that this presentation  holds after  certain transformations that we specify  in the next definition:

\begin{Def} \label{Def:LocalSeq}
Let $V$ be a smooth scheme   defined over a perfect field $k$. 
A \emph{permissible transformation} is either: 
\begin{itemize}
\item A   permissible   blow up $V_1\to V$, i.e., the blow up at a smooth center $Y\subset V$; or

\item A smooth morphism $V_1\to V$.
\end{itemize}
A \emph{local sequence} is a sequence of permissible transformations, 
$$V=V_0\stackrel{\phi _1}{\longleftarrow} V_1\stackrel{\phi _2}{\longleftarrow}\ldots \stackrel{\phi _l}{\longleftarrow} V_l,$$
so that each $\phi _j$, $j=1,\ldots,l$, is either a permissible blow up at $Y_{i-1}\subset V_{i-1}$ or a smooth morphism.
\end{Def}

\begin{Def} \label{Def:GLocSeq}
Let $\mathcal{G}$ be a Rees algebra on a smooth scheme  $V$    over a perfect field $k$.  
A $\mathcal{G}$-local sequence is a  local    sequence as in Definition \ref{Def:LocalSeq},  
\begin{equation*}
\xymatrix@R=0pt@C=30pt{
V=V_0 & V_1 \ar[l]_>>>>>{\phi _1} & \ldots \ar[l]_{\phi _2} & V_l \ar[l]_{\phi _l}\\
\mathcal{G}=\mathcal{G}_0 & \mathcal{G}_1 \ar[l] & \ldots \ar[l] & \mathcal{G}_l, \ar[l]
}\end{equation*}
such that for every $i=1,\ldots,l$, 
\begin{itemize}
\item If $\phi_i$ is a blow up then $Y_{i-1}\subset \Sing  (\mathcal{G}_{i-1})$ and $\mathcal{G}_i$ is the transform of $\mathcal{G}_{i-1}$ as in Definition \ref{def:transf_law};

\item If $\phi_i$ is a smooth morphism then $\mathcal{G}_{i}$ is the pull-back of $\mathcal{G}_{i-1}$.
\end{itemize}
\end{Def}

\begin{Parrafo} \label{Def:LocPres} {\bf Local presentations for the mutiplicity.}   Let $X$ be an algebraic variety defined over a perfect field $k$,    and  let $m=\max\mult_X > 1$.
A \emph{global presentation} for the function $\mult_X$ is given by:  

\begin{enumerate}
\item[(i)] A closed embedding $X\subset V^{}$ where $V^{}$ is a smooth scheme of dimension $n >d$; 
\item[(ii)]  A collection of hypersurfaces $\mathfrak{H}_1,\mathfrak{H}_2,\ldots,\mathfrak{H}_r$ in $V^{}$, and weights
$b_1,b_2,\ldots,b_r\in\mathbb{N}$ with  $\text{max}\mult_{\mathfrak{H}_i}=b_i$ for $i=1,\ldots,r$ such that:    
\begin{enumerate}
\item[(a)] The closed set $\mathrm{\underline{Max}}\mult_X$ can  be expressed in terms of hypersurface multiplicities:
\begin{equation} \label{eq:LocPresHyp}
\mathrm{\underline{Max}}\mult_X=
\left\lbrace \xi\in V^{} \mid \mult_{\mathfrak{H}_i}(\xi)\geq b_i, \ i=1,2,\ldots,r \right\rbrace=\cap_{i=1}^r\text{\underline{Max} mult}_{\mathfrak{H}_i}; 
\end{equation}
\item[(b)]   Expression (\ref{eq:LocPresHyp}) is stable under local sequences: given any local sequence as in 
Definition~\ref{Def:LocalSeq}: 
\begin{equation} \label{eq:LocalSeq}
\xymatrix@R=0pt@C=30pt{
V^{}=V^{}_0 & V^{}_1 \ar[l]_>>>>>{\phi _1} & \ldots \ar[l]_{\phi _2} & V^{}_l \ar[l]_{\phi _l}\\
\  \; \; \; \; \; \cup & \cup & & \cup \\
X=X_0 & X_1 \ar[l] & \ldots \ar[l] & X_l \ar[l]
}\end{equation}
(where for $j=1,\ldots,l$, $X_j$ is the strict transform of $X_{j-1}$ and if $\phi_j$ is a blow up then the center is contained in 
$\mathrm{\underline{Max}}\mult_{X_{j-1}}$), then for $ j=0,1,\ldots,l$, 
\begin{equation} \label{eq:LocPresHyp2}
\left\lbrace \xi\in X_j \mid \mult_{X_j}(\xi)=m \right\rbrace=
\left\lbrace \xi\in V^{}_j \mid \mult_{\mathfrak{H}_{i,j}}(\xi)\geq b_i, \ i=1,2,\ldots,r \right\rbrace,
\end{equation}
where $\mathfrak{H}_{i,j}$ is the strict transform of $\mathfrak{H}_{i,j-1}$ in $V_j$ ($\mathfrak{H}_{i,0}=\mathfrak{H}_{i}$).
\end{enumerate} 
\end{enumerate}
A \emph{local presentation} for the function $\mult_X$   in a neighbourhood of  a point  $\xi\in \mathrm{\underline{Max}}\mult_X$ is a presentation satisfying conditions (i) and (ii) in  a suitable  (\'etale) open neighborhood  $U\subset X$   of $\xi$.
\end{Parrafo}

\begin{Rem}
Note that equality (\ref{eq:LocPresHyp2}) is equivalent to saying that for $j=0,1,\ldots,l-1$, 
$$
\mathrm{\underline{Max}}\mult_{X_{j}}=
\left\lbrace \xi\in V^{}_j \mid \mult_{\mathfrak{H}_{i,j}}(\xi)\geq b_i, \ i=1,2,\ldots,r \right\rbrace,
\qquad 
$$
and either $\left\lbrace \xi\in X_l \mid \mult_{X_l}(\xi)=m \right\rbrace=\emptyset$
(which means $\max\mult_{X_l}<m$), or 
$$
\mathrm{\underline{Max}}\mult_{X_{l}}=
\left\lbrace \xi\in V^{}_l \mid \mult_{\mathfrak{H}_{i,l}}(\xi)\geq b_i, \ i=1,2,\ldots,r \right\rbrace.
$$
\end{Rem}

\begin{Parrafo} \label{LocalPresMult}
\textbf{Rees algebras vs. local presentations.} Let  $X$ be an algebraic variety, let  $\xi\in \mathrm{\underline{Max}}\mult_X$ and suppose that there is a local presentation as in \ref{Def:LocPres} in an (\'etale) neighborhood  $U\subset X$ of $\xi$  which we denote again by $X$ for simplicity.     Then we may assume that 
$V=\Spec(R)$ for some smooth $k$ algebra $R$, and that  each hypersurface $\mathfrak{H}_i$ is defined by an equation $f_i\in R$, $i=1,\ldots,r$. Now, if we define  the $R$-Rees algebra, $\mathcal{G}=R[f_1W^{b_1},\ldots,f_r W^{b_r}]$, 
then the equality (\ref{eq:LocPresHyp})  can be  expressed as:   
\begin{equation} \label{eq:LocPresSing}
\mathrm{\underline{Max}}\mult_X=\Sing(\mathcal{G}). 
\end{equation}
Moreover, given a local sequence as in Definition \ref{eq:LocalSeq}, there is an induced $\G$-local sequence  and transformations of Rees algebras as in Definition \ref{Def:GLocSeq}, 
\begin{equation} \label{eq:LocalGSeq}
\xymatrix@R=0pt@C=30pt{
V=V_0 & V_1 \ar[l]_>>>>>{\phi _1} & \ldots \ar[l]_{\phi _2} & V_l \ar[l]_{\phi _l}\\
\  \; \; \; \; \; \cup & \cup & & \cup \\
X=X_0 & X_1 \ar[l] & \ldots \ar[l] & X_l \ar[l] \\
\mathcal{G}=\mathcal{G}_0 & \mathcal{G}_1 & \ldots  & \mathcal{G}_l
}\end{equation}
 and equality (\ref{eq:LocPresHyp2}) can be expressed as
\begin{equation} \label{eq:LocPresSing2}
\left\lbrace \xi\in X_j \mid \mult_{X_j}(\xi)=m \right\rbrace=
\Sing(\mathcal{G}_j), 
\qquad j=0,1,\ldots,l.
\end{equation}

From the previous discussion it follows  that finding a local presentation for the function $\mult_X$ at a point $\xi$ is equivalent to choosing a local (\'etale) embedding $X\subset V$ and a Rees algebra $\mathcal{G}$ in $V$ such that:
\begin{itemize}
\item $\mathrm{\underline{Max}}\mult_X=\Sing(\mathcal{G})$; 

\item For any local sequence as in (\ref{eq:LocalSeq}) or  (\ref{eq:LocalGSeq}) we have 
$$\left\lbrace \xi\in X_j \mid \mult_{X_j}(\xi)=m \right\rbrace=
\Sing(\mathcal{G}_j), \qquad j=0,1,\ldots,l.$$
\end{itemize}
As a consequence of the previous discussion, the problem of finding a simplification of the multiplicity of an algebraic variety can be translated into the problem of finding a resolution of a suitable Rees algebra in a smooth scheme.   In what follows, we will use the notation $(V,\G)$ for a given local presentation of the multiplicity as above. 
\end{Parrafo}

\begin{Thm} \label{Th:PresFinita}
\cite[7.1]{V}
Let $X$ be a reduced equidimensional scheme   defined over a perfect field $k$.
For every point $\xi\in X$ there exists a local presentation for the function $\mult_X$. 
\end{Thm}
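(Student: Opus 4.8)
The statement is that for a reduced equidimensional scheme $X$ over a perfect field $k$, every point $\xi \in X$ admits a local presentation for $\mult_X$ in the sense of \ref{Def:LocPres}. Since this is attributed to \cite[7.1]{V}, the plan is to reproduce the architecture of Villamayor's argument rather than invent something new. First I would reduce to a purely local problem: replace $X$ by a sufficiently small étale neighborhood $U$ of $\xi$, so that we may take $X = \Spec(B)$ with $B$ a reduced equidimensional $k$-algebra of dimension $d$, and $m = \mult_X(\xi) = \max\mult_X$. The key input is the classical description of the maximum multiplicity locus via a finite projection: there is an étale-local embedding $X \hookrightarrow V = \Spec(R)$ into a smooth $n$-dimensional scheme and a finite, generically étale projection $V \to V^{(d)}$ onto a smooth $d$-dimensional scheme, arranged so that $B$ is a finite $\mathcal{O}_{V^{(d)}}$-module generated (as an algebra) by elements whose minimal polynomials have degree $\le m$; one can moreover normalize these polynomials so they are of the form $T^m + a_1 T^{m-1} + \cdots + a_m$ with $a_1 = 0$ (a Tschirnhausen-type shift, available since $k$ is perfect and $m$ is not a zero divisor in the relevant sense, or more carefully by a separable choice of transversal parameter). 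This produces the hypersurfaces $\mathfrak{H}_i = \{f_i = 0\} \subset V$ and the weights $b_i$, with $b_i = \max\mult_{\mathfrak{H}_i}$.

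The second step is to verify condition (ii)(a), equation (\ref{eq:LocPresHyp}): that $\mathrm{\underline{Max}}\mult_X = \cap_i \mathrm{\underline{Max}}\mult_{\mathfrak{H}_i}$ as sets in $V$. This is where the finiteness of the projection and the degree-$m$ bound are used: a point of $X$ has multiplicity $m$ (the maximum) precisely when each generator $f_i$, viewed as a polynomial over the regular local ring of $V^{(d)}$, has order $\ge b_i$ at that point — i.e. when the singularity is "as degenerate as the degree allows" in every variable. One direction ($\subseteq$) is the statement that the multiplicity of $B$ at a point is bounded by, and here equals, the product structure coming from the polynomials; the other direction uses that $\mult$ is upper semicontinuous and $m$ is maximal. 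The third step is stability under permissible transformations, condition (ii)(b): given a local sequence of blow-ups at smooth centers inside $\mathrm{\underline{Max}}\mult$ and smooth morphisms, one must check that taking strict transforms of the $\mathfrak{H}_i$ continues to describe $\mathrm{\underline{Max}}\mult_{X_j}$ as long as the maximum multiplicity has not dropped. For smooth morphisms this is a straightforward base-change/pullback compatibility. For a blow-up at a center $Y \subset \mathrm{\underline{Max}}\mult_{X_{j}}$, one uses that $Y \subset \mathfrak{H}_i$ is permissible for each hypersurface, that the strict transform $f_{i,j+1}$ of $f_{i,j}$ satisfies $f_{i,j}\mathcal{O}_{V_{j+1}} = I(E)^{b_i} \cdot (f_{i,j+1})$ with $E$ the exceptional divisor precisely when the order along $Y$ is exactly $b_i$ (which holds on the maximum multiplicity locus), and then re-invokes the same order-vs-multiplicity dictionary on $X_{j+1}$. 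The persistence of the maximum multiplicity $m$ on $X_{j+1}$ is exactly the condition that the transformed intersection $\cap_i \mathrm{\underline{Max}}\mult_{\mathfrak{H}_{i,j+1}}$ is nonempty, matching the required equivalence.

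I expect the main obstacle to be the case (ii)(b) for blow-ups: showing that the dictionary "$\mult_{X_j} = m$ $\iff$ $\ord f_{i,j} \ge b_i$ for all $i$" is genuinely preserved, and in particular that the strict transforms of the $f_i$ are the right objects (as opposed to total or controlled transforms). This requires knowing that on $\mathrm{\underline{Max}}\mult_{X_j}$ the order of each $f_{i,j}$ is exactly $b_i$, not merely $\ge b_i$ — otherwise the weighted transform would not be the strict transform — which in turn goes back to the careful choice of the finite projection and the fact that the center is equimultiple. A secondary technical point is ensuring all of this can be done simultaneously and étale-locally around $\xi$, i.e. that finitely many $f_i$ suffice and that the étale neighborhood can be chosen so that the projection is finite on it. Since the full argument is exactly the content of \cite[\S7]{V}, I would at this point cite that reference for the detailed verification, having laid out why the three conditions of Definition \ref{Def:LocPres} hold.
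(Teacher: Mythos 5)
Your plan follows essentially the same route as the paper, which itself proves nothing new here: it cites \cite[7.1]{V} and records the architecture in \ref{PresFinita} (reduce to $X=\Spec(B)$ affine, choose an \'etale-local finite projection $\alpha:\Spec(B)\to\Spec(S)$ with $S$ smooth of dimension $d$, embed $X$ into $\Spec(S[x_1,\ldots,x_{n-d}])$, take as hypersurfaces the minimal polynomials $f_i$ of the algebra generators $\theta_i$ with weights their degrees, and defer the verification of conditions (ii)(a)--(b) of \ref{Def:LocPres} to \cite{V}). Two corrections are needed, one of them a step that would actually fail. First, the Tschirnhausen-type normalization you propose (writing $T^{m}+a_1T^{m-1}+\cdots+a_m$ and arranging $a_1=0$) is not available over a perfect field of characteristic $p$ when $p$ divides the degree of the minimal polynomial: killing the subleading coefficient requires dividing by the degree, and perfectness of $k$ does not help. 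The point of this paper, stressed in the introduction and in \ref{eliminacion_finita}, is that the characteristic-free construction avoids Tschirnhausen entirely (it only reappears in characteristic zero, and only to describe the elimination algebra), so this step must simply be dropped; fortunately it plays no role in \ref{Def:LocPres}. Second, you attribute the dictionary ``$\mult_{X}(\eta)=m$ if and only if $\nu_{\eta}(f_i)\geq b_i$ for all $i$'' to a ``degree-$m$ bound'' and a vague ``product structure'': in the paper the $f_i$ have degrees $m_i=\deg f_i$, which are the weights and need not all equal $m$, and what actually powers the dictionary is that the finite extension $S\subset B$ is arranged to have generic rank exactly $m=\mm$, so that Zariski's multiplicity formula \cite[Chapter~8, \S 10, Theorem~24]{Z-SII} applies at points of multiplicity $m$: such a point is the unique point of its fiber, its residue field is isomorphic to that of its image, and $\mathfrak{m}_{\alpha(\xi)}$ generates a reduction of $\mathfrak{m}_{\xi}$. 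This is the key lemma your outline leaves implicit, and it is also what guarantees in your step for (ii)(b) that the order of each $f_i$ along an equimultiple center is exactly $b_i$, so that weighted and strict transforms agree. With the normalization removed and this input made explicit, your plan coincides with the paper's treatment, which in any case rests on the detailed verification in \cite[\S\S 5--7]{V}.
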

In the following lines we present some of the ideas on which   the proof of Theorem \ref{Th:PresFinita} is based. We will be using some of them in the proof of Theorem \ref{Main_Theorem}.  
 
\begin{Parrafo} \label{PresFinita}
\textbf{Some ideas behind the proof of Theorem \ref{Th:PresFinita}.\cite[\S5, \S7]{V}}
Let $X=\Spec(B)$ be an affine   algebraic variety of dimension $d$ defined over a perfect field $k$,  and let $\xi\in \underline{\text{Max}} \text{ Mult}_X$.
Then  it can be shown that (maybe, after replacing $B$ and  $k$  by  suitable  \'etale extensions), 
there is a regular   $k$-algebra   $S$ and a finite extension $S\subset B$ of generic rank $m=\text{max mult}_X$, inducing a finite morphism $\alpha: \text{Spec}(B)\to \text{Spec}(S)$. Under these assumptions, $B=S[\theta_1,\ldots, \theta_{n-d}]$,  for some $\theta_1,\ldots, \theta_{n-d}\in B$ and some $n>d$. Observe that the previous extension induces a natural embedding $X\subset V^{(n)}:=\text{Spec}(R)$, where $R=S[x_1,\ldots, x_{n-d}]$.  Let $K(S)$, repectively $K(B)$, be the total rings of fractions  of $S$ and $B$.  Now, if  $f_i(x_i)\in K(S)[x_i]$ denotes the minimal polynomial of $\theta_i$ for $i=1,\ldots, (n-d)$, then it can be checked that in fact  $f_i\in S[x_i]$ and as a consequence $\langle f_1(x_1), \ldots, f_{n-d}(x_{n-d})\rangle\subset {\mathcal I}(X)$, the defining ideal of $X$ in $V^{(n)}$.  If each polinomial $f_i$ is of degree $m_i$,  it is proven that the differential Rees algebra
\begin{equation}
\label{G_Representa}
\Gn:=\Diff(R[f_1W^{m_1}, \ldots, f_{n-d}W^{m_{n-d}}])
\end{equation}
is a local presentation of $\underline{\text{Max}} \text{ mult}_X$   at $\xi$. Moreover, for each $i=1,\ldots ,n-d$, there is a commutative diagram:  
\begin{equation}\label{diagrama_presentacion}
\xymatrix{
 S[x_1,\ldots ,x_{n-d}] \ar[r] &     S[x_1,\ldots ,x_{n-d}]  /\langle f_1,\ldots, f_{n-d}\rangle \ar[r] & B \\
 S[x_i]  \ar[u]   \ar[r] &     B_i=S[x_i]/\langle f_i \rangle  \ar[u] \ar[ur] &    \\
 S \ar@/^2pc/[uu]^{\beta^*}  \ar[u]_{\beta_{H_i}^*}   \ar[ur]^{\alpha_{H_i}^*}   
\ar@/_2pc/[uurr]_{\alpha^*}  
& & 
}
\end{equation}
 
The inclusion $S\subset S[x_i]/\langle f_i\rangle$ induces a finite projection $\alpha_{H_i}: \text{Spec} (B_i)\to \text{Spec}(S)$ and $\G _i^{(d+1)}=\Diff (S[x_i][f_iW^{b_i}])\subset S[x_i][W]$ represents the multiplicity of  the hypersurface $H_i$ defined by $f_i$ in $V_i^{(d+1)}=\Spec(S[x_i])$. 

Finally, since the generic rank of the extension $S\subset B$ equals $m=\text{max mult}_X$, by Zariski's multiplicity formula for finite projections (cf., \cite[Chapter 8, \S 10, Theorem 24]{Z-SII}) it follows that: 
\begin{enumerate}
	\item The point $\xi$ is the unique point in the fiber over $\alpha(\xi)\in \text{Spec}(S)$; 
	\item The residue fields at $\xi$ and $\alpha(\xi)$ are isomorphic; 
	\item The defining  ideal of $\alpha(\xi)$ at $S$, $\mathfrak{m}_{\alpha(\xi)}$, generates a reduction of the maximal ideal of $\xi$, $\mathfrak{m}_{\xi}$, at $B_{\mathfrak{m}_{\xi}}$. 
\end{enumerate}
\end{Parrafo}

\begin{Rem}
In fact, the notion of local presentation as in (\ref{Def:LocPres}) can be given  for any upper-semi-continuous function on $X$, as long as the value of the function does not increase after the blow up at a smooth  center included in the stratum defined by the maximum value of the function.
\medskip

An example of a function having this property is the Hilbert-Samuel function,
$$\mathrm{HS}_X:X\to \mathbb{N}^{\mathbb{N}}$$
which is upper-semi-continuous  (see \cite{Bennett}); if   $\phi:X'\to X$  is the blow up   at  smooth center $Y\subset X$ such that the Hilbert-Samuel function is constant along $Y$ then  we have that   (see \cite{Hir2}),
$$\mathrm{HS}_{X'}(\xi')\leq \mathrm{HS}_X(\phi(\xi)), \qquad \forall \xi'\in X'.$$

Indeed, local presentations for the Hilbert-Samuel function also exist and, in characteristic zero, they are used by Hironaka to obtain resolution of singularities (see \cite{Hir1}).
\end{Rem}

Local presentations are not unique. For instance, once a local embedding $X\subset V$  is fixed, there may be different  ${\mathcal O}_V$-Rees algebras representing $\Mm$. However, it can be proven that they all lead to the same simplification of the multiplicity of $X$ (if it exists). This fact will be clarified in forthcoming paragraphs (see Corollary \ref{cor:res_weak_eq}). The previous discussion  motivates the next definition.

\begin{Def}\cite[Definition 3.5]{Br_G-E_V} Let $V$ be a smooth scheme over a perfect field $k$. 
We say that two $\mathcal{O}_{V}$-Rees algebras $\mathcal{G}$ and $\mathcal{H}$ are \textit{weakly equivalent} if:

\begin{enumerate}
	\item $\mathrm{Sing}(\mathcal{G})=\mathrm{Sing}(\mathcal{H})$;
	\item Any $\mathcal{G}$-local sequence over $V$
	$$\mathcal{G}=\mathcal{G}_0\longleftarrow \mathcal{G}_1\longleftarrow \ldots \longleftarrow \mathcal{G}_r$$
	induces an $\mathcal{H}$-local sequence over $V$
	$$\mathcal{H}=\mathcal{H}_0\longleftarrow \mathcal{H}_1\longleftarrow \ldots \longleftarrow \mathcal{H}_r$$
	and vice versa, and moreover the equality in (1) is preserved, that is
	\item $\mathrm{Sing}(\mathcal{G}_j)=\mathrm{Sing}(\mathcal{H}_j)$ for $j=0,\ldots ,r$.\\
\end{enumerate}
\end{Def}

\begin{Rem}
\ 
\begin{itemize}
\item \cite[Proposition 5.4]{E_V}
If $\mathcal{G}_1$ and $\mathcal{G}_2$ are two integrally equivalent Rees algebras over $R$, then they are weakly equivalent.

\item \cite[Section 4]{Br_G-E_V}
A Rees algebra $\mathcal{G}$ and its differential closure $\mathrm{Diff}(\mathcal{G})$ are weakly equivalent. This is a consequence of Giraud's Lemma (see \cite{Giraud}).

\item \cite[Theorem 3.11]{Br_G-E_V}
Let $\mathcal{G}_1$ and $\mathcal{G}_2$ be two Rees algebras over $V$. Then $\mathcal{G}_1$ and $\mathcal{G}_2$ are weakly equivalent if and only if $\overline{\mathrm{Diff}(\mathcal{G}_1)}=\overline{\mathrm{Diff}(\mathcal{G}_2)}$.
\end{itemize}
In fact, from Remark \ref{order_int_diff} it follows now that: 
\end{Rem}

\begin{Cor}\label{cor:ord_weak_eq}
Let $\mathcal{G}_1$ and $\mathcal{G}_2$ be two weakly equivalent Rees algebras over $V$. Then for all $\eta \in \mathrm{Sing}(\mathcal{G}_1)=\mathrm{Sing}(\mathcal{G}_2)$, we have $\mathrm{ord}_{\eta }\mathcal{G}_1=\mathrm{ord}_{\eta }\mathcal{G}_2$.
\end{Cor}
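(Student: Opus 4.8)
The statement to prove is Corollary~\ref{cor:ord_weak_eq}: if $\mathcal{G}_1$ and $\mathcal{G}_2$ are weakly equivalent over a smooth $V$ defined over a perfect field, then $\ord_\eta \mathcal{G}_1 = \ord_\eta \mathcal{G}_2$ at every point $\eta$ of the common singular locus. The plan is to reduce the equality of orders to an already-available coincidence of algebras, namely the characterization of weak equivalence quoted immediately above the corollary: $\mathcal{G}_1$ and $\mathcal{G}_2$ are weakly equivalent if and only if $\overline{\Diff(\mathcal{G}_1)} = \overline{\Diff(\mathcal{G}_2)}$ (\cite[Theorem 3.11]{Br_G-E_V}). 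So the first step is simply to invoke that theorem to get $\overline{\Diff(\mathcal{G}_1)} = \overline{\Diff(\mathcal{G}_2)}$ as $\mathcal{O}_V$-Rees algebras.

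Once that identification is in hand, the rest is a matter of chaining the invariance statements collected in Remark~\ref{order_int_diff}. That remark asserts that for a Rees algebra $\mathcal{G}$ over a smooth algebra over a perfect field, $\ord_\xi(\mathcal{G}) = \ord_\xi(\overline{\mathcal{G}}) = \ord_\xi(\Diff(\mathcal{G}))$ for every $\xi \in \Sing(\mathcal{G})$, and likewise the singular loci agree. Applying this to $\mathcal{G}_1$ gives $\ord_\eta(\mathcal{G}_1) = \ord_\eta(\Diff(\mathcal{G}_1)) = \ord_\eta(\overline{\Diff(\mathcal{G}_1)})$; applying it to $\mathcal{G}_2$ gives $\ord_\eta(\mathcal{G}_2) = \ord_\eta(\overline{\Diff(\mathcal{G}_2)})$; and the middle terms are literally equal by the previous paragraph. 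Chaining these equalities yields $\ord_\eta(\mathcal{G}_1) = \ord_\eta(\mathcal{G}_2)$. One should also note at the outset that the hypothesis $\Sing(\mathcal{G}_1) = \Sing(\mathcal{G}_2)$ (part (1) of weak equivalence) guarantees that $\eta$ lies in the singular locus of every algebra appearing, so all the invocations of Remark~\ref{order_int_diff} are legitimate.

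I do not anticipate a genuine obstacle here: the corollary is essentially a bookkeeping consequence of results already stated in the excerpt. The only point requiring a modicum of care is making sure the perfect-field hypothesis is actually available — it is, since $V$ is assumed smooth over a perfect field $k$ throughout this section — because the equality $\ord_\xi(\mathcal{G}) = \ord_\xi(\Diff(\mathcal{G}))$ in Remark~\ref{order_int_diff} (citing \cite[Proposition 4.4]{V3} and \cite[Remark 3.5, Proposition 6.4]{E_V}) relies on it. If one wanted to avoid leaning on \cite[Theorem 3.11]{Br_G-E_V} for the "$\Rightarrow$" direction, an alternative is to argue directly: weak equivalence includes closure under arbitrary $\mathcal{G}$-local sequences, in particular under sequences of point blow-ups and restrictions, and Hironaka's order can be recovered from the maximal length of a sequence of permissible monoidal transformations keeping a point in the singular locus (together with the transformation law), so the combinatorial data shared by the two algebras already pins down $\ord_\eta$; but the slick route via the cited theorem plus Remark~\ref{order_int_diff} is shorter and is the one I would write up.
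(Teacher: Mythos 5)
Your proposal is correct and follows essentially the same route as the paper, which states the corollary as an immediate consequence of \cite[Theorem 3.11]{Br_G-E_V} (weak equivalence is equivalent to $\overline{\Diff(\mathcal{G}_1)}=\overline{\Diff(\mathcal{G}_2)}$) combined with Remark \ref{order_int_diff}. The chaining $\ord_\eta(\mathcal{G}_i)=\ord_\eta(\Diff(\mathcal{G}_i))=\ord_\eta(\overline{\Diff(\mathcal{G}_i)})$ and the observation that the perfect-field/smoothness hypotheses make Remark \ref{order_int_diff} applicable are exactly the intended argument.
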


As a consequence: 

\begin{Cor} \cite[Remark 11.8]{Br_V2} \label{cor:res_weak_eq}
Let $\mathcal{G}_1$ and $\mathcal{G}_2$ be two weakly equivalent Rees algebras. Then a constructive resolution of $\mathcal{G}_1$ induces a constructive resolution of $\mathcal{G}_2$ and vice versa.
\end{Cor}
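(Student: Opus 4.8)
The plan is to reduce the statement to a single principle: a constructive resolution of a Rees algebra is completely determined by an upper semi-continuous \emph{resolution function} on the singular locus, and every ingredient of that function — Hironaka's order function in the successive dimensions produced by elimination, together with the combinatorial bookkeeping of the exceptional divisors — depends only on the weak equivalence class. Once this is granted, weakly equivalent algebras get the \emph{same} resolution function at every stage, hence the algorithm selects the same centers, and the two resolution processes are literally the same sequence of blow ups.

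First I would recall the shape of a constructive resolution in the sense of \cite{V1}, \cite{B-M}, \cite{E_V97}: to $\mathcal{G}$ on $V$ one attaches to each $\xi\in\Sing(\mathcal{G})$ a value $f_{\mathcal{G}}(\xi)$ in a totally ordered set, in such a way that $Y=\underline{\mathrm{Max}}\,f_{\mathcal{G}}$ is a regular permissible center; after the blow up at $Y$ the maximum of the resolution function drops, and after finitely many steps one reaches $\Sing=\emptyset$ with the accumulated exceptional locus a normal crossings divisor. The principal coordinate of $f_{\mathcal{G}}$ is $\ord^{(d)}(\mathcal{G})$, and the remaining coordinates are, on one hand, order functions $\ord^{(d-1)},\ord^{(d-2)},\dots$ of the elimination algebras obtained by successively projecting to smooth ambient spaces of lower dimension and their transforms, and, on the other hand, integers recording which components of the exceptional divisor pass through $\xi$.

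Next I would invoke the hypothesis. By weak equivalence $\Sing(\mathcal{G}_1)=\Sing(\mathcal{G}_2)$, and every $\mathcal{G}_1$-local sequence is a $\mathcal{G}_2$-local sequence and conversely, with coinciding singular loci at each stage; applying the definition to the tails of such a sequence shows that the transforms $\mathcal{G}_{1,j}$ and $\mathcal{G}_{2,j}$ are again weakly equivalent for every $j$. Hence Corollary~\ref{cor:ord_weak_eq} applies at every stage and gives $\ord_{\xi}(\mathcal{G}_{1,j})=\ord_{\xi}(\mathcal{G}_{2,j})$ for all $\xi\in\Sing(\mathcal{G}_{1,j})=\Sing(\mathcal{G}_{2,j})$. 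The combinatorial coordinates of $f$ only see the sequence of blow ups $V_j\leftarrow V_{j+1}$ and the resulting divisors in $V$, which is common data. Granting that the elimination-algebra coordinates are likewise invariants of the weak equivalence class, one concludes $f_{\mathcal{G}_{1,j}}=f_{\mathcal{G}_{2,j}}$ for all $j$; therefore the algorithm chooses the same center $Y_j$ at every step, the two sequences of blow ups coincide, and $\Sing(\mathcal{G}_{1,l})=\emptyset$ exactly when $\Sing(\mathcal{G}_{2,l})=\emptyset$. The normal crossings condition concerns the exceptional divisor of $V_0\leftarrow V_l$, which is identical in both cases; by symmetry a constructive resolution of $\mathcal{G}_2$ induces one of $\mathcal{G}_1$.

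The main obstacle is precisely the parenthetical step above: one must verify that the pieces of $f$ produced by descending to lower-dimensional ambient spaces — the elimination algebras and their successive transforms — behave well under weak equivalence, i.e. that weakly equivalent Rees algebras have weakly equivalent (or at least equi-ordered) elimination algebras. This can be handled either by using the characterization $\overline{\mathrm{Diff}(\mathcal{G}_1)}=\overline{\mathrm{Diff}(\mathcal{G}_2)}$ recorded in the Remark preceding the statement together with the compatibility of elimination with $\mathrm{Diff}$-saturation and integral closure, or by an induction on the dimension of the ambient space in which the inductive hypothesis is the statement itself applied to the elimination algebras. One should also note at the outset that, since resolution of Rees algebras (hence constructive resolution) is only known to exist over fields of characteristic zero, the assertion is implicitly made in that setting.
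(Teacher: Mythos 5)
Your argument is essentially the paper's own: the paper deduces the corollary in one line from Corollary \ref{cor:ord_weak_eq} together with the fact that, in characteristic zero, constructive resolution is given by satellite functions all of which derive from Hironaka's order function (citing \cite{E_V97} and \cite{Br_V2}), which is precisely the reduction you spell out, including the caveat that the statement lives in characteristic zero. The lower-dimensional invariance point you flag as the ``main obstacle'' is likewise delegated to the cited literature in the paper rather than proved there, so your proposal matches the intended proof.
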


Corollary \ref{cor:res_weak_eq} follows from Corollary \ref{cor:ord_weak_eq} and the fact that, in characteristic zero,  constructive resolution of Rees algebras is  given in terms of the so called {\em satellite functions}. All such  functions  derive form Hironaka's order function (see \cite{E_V97}).

\section{Elimination algebras}\label{ElimAlg}

  As indicated in the previous section, the problem of algorithmic simplification of the   multiplicity of an algebraic variety (and hence, that of algorithmic resolution) can be, ultimately, translated into a problem of resolution of Rees algebras via local presentations (see  \ref{LocalPresMult}). 
 Now suppose we are given a Rees algebra ${\mathcal G}$ on a smooth $n$-dimensional scheme $V$. Sometimes the resolution of $\G$ is {\em equivalent} to the resolution of another Rees algebra defined on a smooth scheme of lower dimension,   the latter, at least phylosophically, should be an easier problem to solve. 

\

For instance, let $k$ be a perfect field, and consider the Rees algebra ${\mathcal G}$ generated by $xW, y^3W^2$ over $V=\text{Spec} (k[x,y])$. Notice that there is a natural inclusion $k[y]\subset k[x,y]$ inducing a smooth projection  $\beta: \text{Spec} (k[x,y]) \to \text{Spec} (k[y])$. Set $Z=\text{Spec} (k[y])$. Now consider the Rees algebra   ${\mathcal R}=\G\cap k[y]=k[y][y^3W^2]$.  It can be checked that  $\Sing (\G)$ is homeomorphic to $\Sing (\R)$ via $\beta$. Moreover, both algebras are linked in a stronger way. It can be shown that any $\G$-local sequence over $V$ (as in Definition \ref{Def:GLocSeq}) induces an $\R$-local sequence over $Z$, together with vertical smooth projections,
\begin{equation} \label{Eq:SeqEliminaZR}
\xymatrix@R15pt{ (V_0, \G_0)= (V, \G) \ar[d]^{\beta} & \ar[l]  (V_1, \G_1)   \ar[d]^{\beta_1}  &  \ar[l] \ldots &  \ar[l]  (V_{m-1}, \G_{m-1} )   \ar[d]^{\beta_{m-1}}  & \ar[l]  (V_{m}, \G_{m} )   \ar[d]^{\beta_m} \\
(Z_0, \R_0)= (Z, \R) & \ar[l]  (Z_1, \R_1)  &  \ar[l] \ldots &  \ar[l]  (Z_{m-1}, \R_{m-1} )  & \ar[l]  (Z_{m}, \R_{m} )}
\end{equation}
and transformations of Rees algebras so that 
$\Sing (\G_i)$ is homeomorphic to $\Sing (\R_i)$ via $\beta_i$ for $i=1,\ldots, m$ (it is worth noticing that for the diagram to commute we may have to replace the transform of $V_i$, $V_{i+1}$,  by a suitable open subset containing $\Sing(\G_{i+1})$ for those $V_i\leftarrow V_{i+1}$ that correspond to blow ups).
Similarly, it can be shown that  any $\R$-local sequence on $Z$ induces a $\G$-local sequence on $V$ together with commutative diagrams as in (\ref{Eq:SeqEliminaZR}) and with the same properties as before.
Thus it follows that finding a resolution of $\G$ is equivalent to finding a resolution of $\R$, but this last problem is easier to solve. 

\

We would like to generalize   the previous setting to a more general one. Here is were elimination algebras come into play. In the following paragraphs we will explain how one can proceed to define an elimination algebra from a given one in a lower dimensional scheme (whenever certain technical conditions are satisfied).   As we will see,  in the previous example, $\R$ above is an elimination algebra of $\G$ over $Z$.

\

Suppose $V^{(n)}$ is an $n$-dimensional smooth scheme over a perfect field $k$, and let $\G^{(n)}$ be a Rees algebra over $V^{(n)}$.  As a first step to define an elimination algebra, given a suitable integer $e\geq 1$, we will search for smooth morphisms  from $V^{(n)}$ to some $(n-e)$-dimensional smooth scheme so that $\Sing (\Gn)$ be homeomorphic to its image via $\beta$. One way to accomplish this condition is by considering morphisms from $V^{(n)}$ which are somehow {\em transversal}  to  $\Gn$. The condition of transversality is expressed in terms of the {\em tangent cone of $\Gn$} at a given point of its singular locus (see Definition \ref{def83} below).   

\

Let    $\xi \in \Sing (\G^{(n)})$ be a closed point, and let $\text{Gr}_{{\mathfrak m}_{\xi}}({\mathcal O}_{V^{(n)},\xi})$ denote the graded ring of $\mathcal{O}_{V^{(n)},\xi}\simeq k'[Y_1,\ldots, Y_n]$, where $k'$ denotes the residue field at $\xi$.  Recall that $\text{Spec} (\text{Gr}_{{\mathfrak m}_{\xi}}({\mathcal O}_{V^{(n)},\xi}))={\mathbb T}_{{V^{(n)}},\xi}$, the tangent space of $V^{(n)}$ at $\xi$. 

\begin{Def}\label{tangent_cone}  {\rm Suppose $\xi\in \Sing(\Gn)$ is a closed point with $\ord_{\xi}(\Gn)=1$. The {\em initial ideal} or  {\em tangent ideal of $\G^{(n)}$ at $\xi$}, is defined as the homogeneous ideal of $\text{Gr}_{{\mathfrak m}_{\xi}}({\mathcal O}_{V^{(n)},\xi})$ generated by
\[
	\text{In}_{\xi}(I_l) := \frac{ I_l+{\mathfrak m}^{l+1}_{\xi}}{{\mathfrak m}^{l+1}_{\xi}}
\]
for all $l \geq 1$, and it is denoted by $\text{In}_{\xi}\G^{(n)}$. The {\em tangent cone of    $\Gn$ at $\xi$}  is the closed subset of ${\mathbb T}_{{V^{(n)}},\xi}$ defined by the initial ideal   of    $\Gn$ at $\xi$, and it is denoted by ${\mathcal C}_{\Gn,\xi}$. }
\end{Def}

\begin{Def}\label{tau}  \cite[4.2]{V07} 
Let $\Gn$ and $\xi$ be as in Definition \ref{tangent_cone}.  The {\em $\tau$-invariant of ${\Gn}$ at ${\xi}$} is the minimum number of variables  in $\text{Gr}_{{\mathfrak m}_{\xi}}({\mathcal O}_{V^{(n)},\xi})$   needed to generate 
$\operatorname{In}_{{\xi}}({\Gn})$. This in turn is the codimension of the largest linear
subspace  ${\mathcal L}_{{\Gn},{\xi}}\subset {\mathcal
C}_{{\Gn},{\xi}}$ such that $u+v\in {\mathcal C}_{{\Gn},{\xi}}$ for all $u\in {\mathcal C}_{{\Gn},{\xi}}$ and $v\in
{\mathcal L}_{{\Gn},{\xi}}$. The $\tau$-invariant of ${\Gn}$ at ${\xi}$ is denoted by  $\tau_{{\Gn},{\xi}}$. 
\end{Def} 

\begin{Rem} \label{tau_properties} Note that: 
\begin{enumerate} 
\item The ideal $\text{In}_{\xi} (\G^{(n)})$ can be defined at any point $\xi\in \Sing(\Gn)$, however it  is non zero if and only if $\ord_{\xi}(\Gn)=1$. It is in this case when the $\tau$-invariant is defined.  Moreover, for  $\xi\in \Sing(\Gn)$ it can be checked that $\ord_{\xi}(\Gn)=1$ if and only if $\tau_{{\Gn},{\xi}}\geq 1$.

\item Since  ${\Gn}\subset \Diff(\Gn)$, there is    an inclusion 
${\mathcal
C}_{\Diff({\Gn}),{\xi}} \subset {\mathcal
C}_{{\Gn},{\xi}}$. Moreover, 
 ${\mathcal C}_{\Diff({\Gn}),{\xi}}={\mathcal L}_{\Diff({\Gn}),{\xi}}={\mathcal L}_{{\Gn},{\xi}}$.  
 In particular, $\Gn$, $\overline{\Gn}$, and $\Diff({\Gn})$ share  the same $\tau$-invariant at any point $\xi\in \Sing (\Gn)$ (see for instance \cite[Remark 4.5, Theorem 5.2]{B}). 
\end{enumerate} 

\end{Rem}

\begin{Def}\label{def83}
{\rm Let $\Gn$ be a Rees algebra on a smooth
$n$-dimensional scheme $V^{(n)}$ over a perfect  field $k$, and let $\xi\in
\mbox{Sing }{\Gn}$ be a  closed point with $\tau_{{\Gn},{\xi}}\geq e\geq 1$. We   say that a local
smooth projection to a  $(n-e)$-dimensional (smooth) scheme  $V^{(n-e)}$, say $\beta : V^{(n)}   \to  V^{(n-e)}$, 
is {\em  $\Gn$-admissible locally at $\xi$} if the
following conditions hold:
\begin{enumerate}
\item The  point $\xi$ is not contained in any codimension-$e$-component  of    $\mbox{Sing }{\Gn}$;
  \item The Rees algebra ${\Gn}$ is a $\beta$-relative differential
algebra (see Definition \ref{Diff_saturation});  
\item {\em Transversality:} $\ker (d_{\xi}\beta)\cap   {\mathcal C}_{{\Gn},\xi}=\{0\}\subset {\mathbb T}_{{V},\xi}$ (where $d_{\xi}\beta$ denotes the differential of $\beta$ at the point $\xi$).
\end{enumerate}}
\end{Def}

\begin{Parrafo}  {\bf Some remarks on conditions (1-3)  in Definition  \ref{def83}.} \cite[\S 8]{Br_V})    
It can be shown that if conditions (1-3)  
hold  at some  point $\xi\in \mbox{Sing}(\Gn)$, then they hold in a neighborhood of $\xi$ in $\mbox{Sing} ({\Gn})$.  Regarding condition  (1),  it can be checked  that  if  $\tau_{{\Gn},{\xi}}\geq e\geq 1$, then  any codimension-$e$-component of ${\Sing}({\Gn})$ containing $\xi$ 
is smooth in a neighborhood of  $\xi$    (cf. \cite[Lemma 13.2]{Br_V}). Therefore this is a canonical center to blow up and a resolution is achieved in one step; hence there is no need to define  an elimination algebra in order to simplify the resolution of $\Gn$. In relation to  condition (2) it is worth noticing that any absolute differential Rees algebra satisfies this condition. Finally,  and regarding condition (3),  it can be shown that almost any smooth local projection defined in an (\'etale)  neighborhood of a  point $\xi \in {\Sing}({\Gn})$ with  $\tau_{{\Gn},{\xi}}\geq e\geq 1$  will satisfy this condition.

\end{Parrafo}

\begin{Def} \label{eliminacion} {\rm Let $\Gn$ be a Rees algebra on a smooth
$n$-dimensional scheme $V^{(n)}$ over a perfect field $k$, and let $\xi\in
\mbox{Sing }{\Gn}$ be a  closed point with $\tau_{{\Gn},{\xi}}\geq e\geq 1$. Let 
$\beta  : V^{(n)}    \to   V^{(n-e)}$ be a $\Gn$-admissible projection in an (\'etale)  neighborhood of $\xi$. Then the ${\Ovne}$-Rees algebra 
$$\Gne:=\Gn\cap {\Ovne}[W],$$
and any other with the same integral closure in ${\Ovne}[W]$,  is an {\em elimination algebra of $\Gn$ in $V^{(n-e)}$}}. 
\end{Def}

\begin{Rem} We underline here that elimination algebras are defined in a different way in \cite{V07} (there, they are defined for $e=1$) and \cite{Br_V} (where the construction is generalized to arbitrary positive integers $e\geq 1$). However, it can be shown, that, up to integral closure,  both definitions lead to the same ${\Ovne}$-Rees algebra (see \cite[Theorem 4.11]{V07}). 

\end{Rem}

\begin{Parrafo} \label{eliminacion_finita} {\bf Local presentations of the multiplicity and elimination algebras.} 
Consider the same  notation and setting as in \ref{PresFinita} for an affine algebraic variey $X=\text{Spec}(B)$ defined over a perfect field $k$ and a point $\xi\in \Mm$.  Recall   that there was a finite morphism $\alpha^*:  X\to V^{(d)}=\text{Spec}(S)$ inducing an    embbeding $X\subset \Vn=\text{Spec}(S[x_1,\ldots, x_{n-d}])$ and a differential  Rees algebra, 
$$\Gn =\G _1^{(d+1)} \odot \ldots \odot \G _{n-d}^{(d+1)}\subset S[x_1,\ldots ,x_{n-d}][W]\mbox{,}$$
which was a  local presentation of the maximum multiplicty of $X$ in a neighborhood of $\xi$. In the following lines we will show that the morphism $\beta: \Vn\to \Vd$ is $\Gn$-admissible and will give a description of  an elimination algebra $\Gd$ of $\Gn$ over  $\Vd$.

\

On the one hand,  it can be checked that for each $i\in \{1,\ldots, (n-d)\}$, the inequality  $\tau_{\G^{(d+1)}_i,\xi}\geq 1$ holds because the $f_i$ are monic polynomials in $x_i$ of degree  $m_i$    defining hypersurfaces of maximum multiplicity $m_i$. In addition, it can be shown that the morphisms  $\beta_{H_i}$ are $\G_i^{(d+1)}$-admissible. Thus, by Definition \ref{eliminacion}, up to integral closure, $\G_i^{(d)}=\G_i^{(d+1)}\cap S[W]$ is an elimination algebra of $\G_i^{(d+1)}$ on $\Vd=\text{Spec}(S)$. 

When the characteristic is zero, up to integral closure, $\G_i^{(d)}$ is the differential Rees algebra generated by the coefficients of the polynomial $f_i\in S[x_i]$ after a Tchirnhausuen transformation. When the characteristic is positive, 
$\G_i^{(d)}$ is generated by suitable symmetric polynomial functions evaluated on the coefficients of the $f_i$ (cf.  \cite{V00}, \cite[\S1, Definition 4.10]{V07}). 

\

Now we claim that $\beta: V^{(n)}\to V^{(d)}$ is $\Gn$-admissible and that, up to integral closure, 
\begin{equation}
\label{amalgama_eliminacion_1}
\G^{(d)}= \G _1^{(d)} \odot \ldots \odot \G _{n-d}^{(d)} \subset S[W]. 
\end{equation}
To prove the claim, first notice that $\tau_{\Gn,\xi} \geq (n-d)$, because the $f_i$ are monic polynomials in $x_i$ of degree  $m_i>1$    defining hypersurfaces of maximum multiplicity $m_i$ in different variables $x_1, \ldots, x_{n-d}$.   Also, since all the $\G_i^{(d+1)}$ are differential Rees algebras, so is $\Gn$. Therefore it can be checked that $\beta: V^{(n)}\to V^{(d)}$ is $\Gn$-admissible and as a consequence, up to integral closure, 
 $$\G _1^{(d)} \odot \ldots \odot \G _{n-d}^{(d)}\subset   \G^{(d)}:=\Gn\cap S[W]  \subset S[W].$$
 
 To show the  equality in (\ref{amalgama_eliminacion_1}) we will use Proposition \ref{finita_elim} below. First, by setting $h=1$ in the proposition it follows that $\G_i^{(d)}\subset {\G^{(d+1)}_i}_{|_{B_i}}$ is a finite extension of $B_i$-Rees algebras for $i=1,\ldots, (n-d)$. Therefore one can conclude that $\G _1^{(d)} \odot \ldots \odot \G _{n-d}^{(d)}\subset \left(\G _1^{(d+1)} \odot \ldots \odot \G _{n-d}^{(d+1)}\right)_{|_B}=\Gn_{|_B}$ is a finite extension of $B$-Rees algebras\footnote{By an abuse of notation, we mean here the extension of $\G _1^{(d)} \odot \ldots \odot \G _{n-d}^{(d)}$ to a $B$-Rees algebra. We will keep on doing this along the rest of the paper.}. Therefore, since $S\subset B$ is finite, $\G _1^{(d)} \odot \ldots \odot \G _{n-d}^{(d)} \subset \Gn_{|_B}\cap S[W]$ is also  a finite extension. 
 
Finally,  by Proposition  \ref{finita_elim},    $\Gd\subset \Gn_{|_B}\cap S[W]$ is   a finite extension of $S$-Rees algebras.  Thus, up to integral closure,  
 $$\G^{(d)}:=\Gn\cap S[W] =\G _1^{(d)} \odot \ldots \odot \G _{n-d}^{(d)} \subset S[W].$$

\end{Parrafo} 

\begin{Prop}\label{finita_elim} \cite[Corollary 7.8]{COA} Let $k$ be a perfect field, let $S$ be a smooth $k$-algebra of dimension $d$. Let $Z_1, \dotsc, Z_h$ denote variables and, for $i = 1, \dotsc, h$, let $f_i(Z_i) \in S[Z_i]$ be a monic polynomial of degree $l_i$. Set
\[
	C
	:= S[Z_1,\cdots,Z_h] / \langle f_1(Z_1), \cdots, f_h(Z_h) \rangle .
\]
Let $\G^{(d+h)}$ be a differential Rees algebra over $S[Z_1,\ldots,Z_h]$ containing $f_1(Z_1)W^{l_1}, \ldots, f_h(Z_h)W^{l_h}$. Then  the natural inclusion $S\subset S[Z_1,\cdots,Z_h]$ is $\G^{(d+h)}$-admissible, and if $\Gd\subset S[W]$ is an elimination algebra of $\G^{(d+h)}$ then  the    inclusion of $C$-Rees algebras,  
\begin{equation} \label{eq:crl:extension_theorem_claim_1}
	\G^{(d)}
	\subset \G_{|_C}^{(d+h)} ,
\end{equation}
is finite. Moreover, as a consequence, there is another inclusion of Rees algebras over $S$, 
\begin{equation} \label{eq:crl:extension_theorem_claim_2}
	\G^{(d)}
	\subset \left (\G^{(d+h)}_{|_C} \cap S[W] \right ),
\end{equation}
which is also finite.

\end{Prop}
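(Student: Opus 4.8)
The plan is to work with $V^{(d+h)}=\Spec(S[Z_1,\ldots,Z_h])$, $V^{(d)}=\Spec(S)$ and the projection $\beta\colon V^{(d+h)}\to V^{(d)}$ induced by $S\subset S[Z_1,\ldots,Z_h]$, exploiting throughout that, since each $f_i$ is monic of degree $l_i$, the ring $C$ is a \emph{free} $S$-module of rank $N:=l_1\cdots l_h$ (basis the monomials $Z_1^{a_1}\cdots Z_h^{a_h}$, $0\le a_i<l_i$); in particular $S\hookrightarrow C$ and $C[W]$ is finite over $S[W]$. First I would check that $\beta$ is $\G^{(d+h)}$-admissible (Definition~\ref{def83} with $e=h$). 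Condition (2) is automatic since $\G^{(d+h)}$ is a full differential Rees algebra. For condition (3): at any $\xi\in\Sing(\G^{(d+h)})$ one has $\nu_\xi(f_i)=l_i$ (the bound $\ge l_i$ comes from $f_i(Z_i)W^{l_i}\in\G^{(d+h)}$, and $\le l_i$ from monicity of $f_i$ in $Z_i$, e.g.\ by Zariski's multiplicity formula applied to $\Spec(S[Z_i]/\langle f_i\rangle)\to\Spec(S)$); since the lower-degree coefficients of $f_i$ lie in $S$, the initial form of $f_i$ at $\xi$ restricts on $\ker d_\xi\beta$ to $(\operatorname{in}_\xi Z_i)^{l_i}$, so $\mathcal{C}_{\G^{(d+h)},\xi}\cap\ker d_\xi\beta=\{0\}$ and $\tau_{\G^{(d+h)},\xi}\ge h$. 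Condition (1) only has to be imposed where building an elimination algebra is relevant, since on a codimension-$h$ component of $\Sing(\G^{(d+h)})$ this locus is already smooth and is a canonical center. Thus $\G^{(d)}:=\G^{(d+h)}\cap S[W]$ is a genuine elimination algebra, and it suffices to prove \eqref{eq:crl:extension_theorem_claim_1} and \eqref{eq:crl:extension_theorem_claim_2} for this model.

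Next I would reduce \eqref{eq:crl:extension_theorem_claim_1} to an integral-dependence statement. Fix generators $g_1W^{m_1},\ldots,g_sW^{m_s}$ of $\G^{(d+h)}$ and let $\bar g_j\in C$ be the class of $g_j$; then $\G^{(d+h)}_{|_C}$ is generated as a $C$-algebra by $\bar g_1W^{m_1},\ldots,\bar g_sW^{m_s}$. Let $\G^{(d)}_C$ denote the extension of $\G^{(d)}$ to a $C$-Rees algebra inside $C[W]$ (that is, $\G^{(d)}\otimes_S C$), which is finite over $\G^{(d)}$ because $C$ is finite over $S$. So it is enough to show each $\bar g_jW^{m_j}$ is integral over $\G^{(d)}_C$. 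For this I would use the characteristic polynomial $P_j(T)=T^N-\sigma^{(j)}_1T^{N-1}+\cdots+(-1)^N\sigma^{(j)}_N\in S[T]$ of multiplication by $\bar g_j$ on the free $S$-module $C$: Cayley--Hamilton gives $P_j(\bar g_j)=0$ in $C$, and, giving $g_jW^{m_j}$ weight $m_j$ so that $\sigma^{(j)}_k$ naturally carries weight $km_j$, this upgrades to the homogeneous identity
\[
(\bar g_jW^{m_j})^{N}-(\sigma^{(j)}_1W^{m_j})(\bar g_jW^{m_j})^{N-1}+\cdots+(-1)^N\sigma^{(j)}_NW^{Nm_j}=0
\]
in $C[W]$. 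Everything then reduces to the claim that $\sigma^{(j)}_kW^{km_j}\in\G^{(d)}$ up to integral closure, for all $j$ and all $1\le k\le N$.

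Verifying this last claim is the heart of the argument, and I expect it to be the main obstacle. One identifies $\sigma^{(j)}_k$ with the $k$-th elementary symmetric function of the ``roots'' $g_j(\vec\theta)$, where $\vec\theta$ ranges over the tuples of roots of $f_1,\ldots,f_h$, so that $P_j(T)=\prod_{\vec\theta}(T-g_j(\vec\theta))$. One then passes to the differentially closed subalgebra $\mathcal{A}_j:=\Diff(S[Z_1,\ldots,Z_h][f_1W^{l_1},\ldots,f_hW^{l_h},g_jW^{m_j}])\subset\G^{(d+h)}$; by the argument of the first paragraph $\beta$ is $\mathcal{A}_j$-admissible, so $\mathcal{A}_j\cap S[W]$ is an elimination algebra of $\mathcal{A}_j$, and $\mathcal{A}_j\subset\G^{(d+h)}$ forces $\mathcal{A}_j\cap S[W]\subset\G^{(d+h)}\cap S[W]=\G^{(d)}$. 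The point is that, by the explicit construction of elimination algebras valid in arbitrary characteristic --- as the algebras generated, up to integral closure, by suitable symmetric (resultant) functions of the coefficients of the monic generators (cf.\ \cite{V00}, \cite[\S1, Definition~4.10, Theorem~4.11]{V07}) --- the elimination algebra $\mathcal{A}_j\cap S[W]$ contains the weighted coefficients $\sigma^{(j)}_kW^{km_j}$. This is precisely where the characteristic-zero shortcut (a Tschirnhausen transformation making the relevant coefficients visible) is unavailable, and one must invoke the positive-characteristic description of elimination together with its compatibility with differential saturation and integral closure.

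Granting the claim: since each $\sigma^{(j)}_k$ lies in $S$ and $\sigma^{(j)}_kW^{km_j}$ is integral over $\G^{(d)}$, the displayed relation shows $\bar g_jW^{m_j}$ is integral over $\G^{(d)}_C$; as $\G^{(d+h)}_{|_C}$ is generated over $C$ by finitely many such elements and $\G^{(d)}_C$ is finite over $\G^{(d)}$, we conclude $\G^{(d+h)}_{|_C}$ is finite over $\G^{(d)}$, which is \eqref{eq:crl:extension_theorem_claim_1}. For \eqref{eq:crl:extension_theorem_claim_2}, the image of $\G^{(d)}\subset S[W]$ in $C[W]$ lies both in $S[W]$ (because $S\hookrightarrow C$) and in $\G^{(d+h)}_{|_C}$ (by construction), so $\G^{(d)}\subset\G^{(d+h)}_{|_C}\cap S[W]$; the right-hand side is a $\G^{(d)}$-submodule of the finite $\G^{(d)}$-module $\G^{(d+h)}_{|_C}$, and $\G^{(d)}$ is Noetherian, hence it is itself finite over $\G^{(d)}$. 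One could alternatively run the whole argument by eliminating the variables $Z_h,\ldots,Z_1$ one at a time, reducing to the case $h=1$ and using that an elimination algebra of a differential Rees algebra is again differential and still contains the remaining $f_iW^{l_i}$; this changes the bookkeeping but not the essential content.
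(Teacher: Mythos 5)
First, note that the paper does not prove this Proposition at all: it is quoted from \cite[Corollary 7.8]{COA}, so your attempt has to be measured against the proof in that reference rather than against anything internal to this paper. Your scaffolding is essentially correct and matches what the paper itself sketches in \ref{eliminacion_finita}: monicity gives $\nu_\xi(f_i)=l_i$ at points of $\Sing(\G^{(d+h)})$, the initial forms restrict to $(\operatorname{in}_\xi Z_i)^{l_i}$ on $\ker d_\xi\beta$, whence transversality and $\tau_{\G^{(d+h)},\xi}\ge h$; the weighted Cayley--Hamilton identity is a correct formal reduction of (\ref{eq:crl:extension_theorem_claim_1}) to a membership statement in $S[W]$; and deducing the finiteness in (\ref{eq:crl:extension_theorem_claim_2}) from (\ref{eq:crl:extension_theorem_claim_1}) by Noetherianity is fine.

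The genuine gap is exactly the step you flag as the heart: the claim that $\sigma^{(j)}_kW^{km_j}$ lies in the integral closure of $\G^{(d)}$ is asserted, not proved, and the references you invoke do not contain it. The descriptions in \cite{V00} and \cite[\S1, Definition 4.10, Theorem 4.11]{V07} concern elimination algebras attached to the monic polynomials $f_i$ themselves, and their generators are the \emph{translation-invariant} weighted symmetric functions (symmetric functions in differences of roots); arbitrary weighted symmetric functions of roots are in general \emph{not} in the elimination algebra, and the coefficients of the characteristic polynomial of multiplication by an arbitrary $\bar g_j$, with $g_jW^{m_j}$ taken from a larger differential algebra $\G^{(d+h)}$, are not of the translation-invariant form those results describe. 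Nor does the membership follow formally from the definition $\G^{(d)}=\G^{(d+h)}\cap S[W]$: for instance, a valuative attempt only yields that evaluating $\G^{(d+h)}$ at a point of the fiber produces an algebra containing (the image of) $\G^{(d)}$, hence an order bounded \emph{above} by $\operatorname{ord}(\G^{(d)})$ along the valuation, whereas what you need is the reverse comparison; obtaining it is precisely where the differential saturation and the presence of the monic $f_iW^{l_i}$ must be used in an essential, nonformal way. That comparison is the actual content of the extension theorem in \cite{COA} from which Corollary 7.8 is derived, so as written your argument assumes a statement essentially equivalent to the result being proved.
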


\begin{Parrafo}\label{EliminationProperties} {\bf First properties of elimination algebras.} {\rm 
 Let 
$\beta : V^{(n)}    \to   V^{(n-e)}$ be a $\Gn$-admissible projection in an (\'etale)  neighborhood of $\xi\in \Sing (\Gn)$, and let $\Gne\subset \Ovne[W]$ be an elimination algebra.  Then: 

\begin{enumerate}

\item  $\Sing({\mathcal G}^{(n)})$ maps injectively into $\Sing({\mathcal G}^{(n-e)})$, in particular 
$$\beta  (\mbox{Sing}({\mathcal G}^{(n)}))\subset \mbox{Sing}({\mathcal G}^{(n-e)})$$  with  equality if the characteristic is zero, or if ${\mathcal G}^{(n)}$ is a
differential Rees algebra.  Moreover, in this case $\mbox{Sing}({\mathcal G}^{(n)})$ and $\beta  (\mbox{Sing}({\mathcal G}^{(n)}))$ are homeomorphic (see \cite[\S 8.4]{Br_V}).   

\item 
If ${\mathcal G}^{(n)}$ is a differential Rees algebra, then so is    ${\mathcal G}^{(n-e)}$ (see \cite[Corollary 4.14]{V07}).  

\item If ${\mathcal G}^{(n)}\subset {\mathcal G}'^{(n)}$ is a finite extension, then ${\mathcal G}^{(n-e)}\subset {\mathcal G}'^{(n-e)}$ is a finite extension (see \cite[Theorem 4.11]{V07}). 

\item  The order of $\Gne$ at  $\beta (\xi)$ does not depend on the choice of the  projection $\beta$  (see
\cite[Theorem 5.5]{V07} and \cite[Theorem 10.1]{Br_V}).

\item If $\tau_{{\Gn},{\xi}}\geq e+l$ for some non-negative integer $l$, then 
$\tau_{{\Gne},{\beta (\xi)}}\geq l$ (cf., \cite{B}).
\end{enumerate} 
}
\end{Parrafo}

\begin{Rem} \label{resolucion_invariantes} To find a resolution of a given Rees algebra one needs to define invariants at the points of its singular locus,  the most important  being  Hironaka's order function (see Definition \ref{Def:HirOrd}).
However,  this rational number is too coarse and has to be refined. This can be done via elimination algebras which allow us to define Hironaka's order function in lower dimensions as indicated in  the following definition. 

\end{Rem}

\begin{Def} \label{Hironaka_order_e} {\rm Let 
$\beta  : V^{(n)}    \to   V^{(n-e)}$ be a $\Gn$-admissible projection in an (\'etale)  neighborhood of $\xi\in \Sing (\Gn)$, and let $\Gne\subset \Ovne[W]$ be an elimination algebra for some $e\geq 1$. Then, by  \ref{EliminationProperties} (4), 
for $\Gn$ we can define {\em Hironaka's order function in dimension $(n-e)$ at $\xi$} as: 
$$\text{ord}^{(n-e)}_{\Gn}({\xi}): = \text{ord}_{\beta_{(\xi)}}(\Gne).$$
}
\end{Def}

\begin{Rem} \label{nuevo}
With the setting and notation in \ref{eliminacion_finita}, 
recall that $\G^{(d)}= \G _1^{(d)} \odot \ldots \odot \G _{n-d}^{(d)} \subset S[W]$ is an elimination algebra of $\Gn$  (up to integral closure), 
and we have
$$\text{ord}^{(d)}_{\Gn}({\xi})= \text{ord}_{\beta_{(\xi)}}(\G^{(d)})=\mathrm{min}_{i=1,\ldots ,n-d}\left\{ \text{ord}_{\beta_{(\xi)}}(\G_1^{(d)}),\ldots ,\text{ord}_{\beta_{(\xi)}}(\G_{n-d}^{(d)}) \right\} \mbox{.}$$

\end{Rem}

\begin{Parrafo} {\bf Elimination algebras and local sequences. } \label{eliminacion_suclocal} 
Let 
$\beta  : V^{(n)}    \to   V^{(n-e)}$ be a $\Gn$-admissible projection in an (\'etale)  neighborhood of $\xi\in \Sing (\Gn)$, and let $\Gne\subset \Ovne[W]$ be an elimination algebra. Then: 

\begin{enumerate}

\item  The homeomorphism from $\Sing {\mathcal G}^{(n)}$ to 
$\beta (\mbox{Sing
}({\mathcal G}^{(n)}))$  has the following properties: 
If $Z\subset \Sing({\mathcal G}^{(n-e)})$ is a smooth closed subscheme, then 
$\beta^{-1}(Z)_{\text{red}}\cap \Sing({\mathcal G}^{(n)})$ is smooth; and if $Y\subset  \Sing({\mathcal G}^{(n)})$ is a smooth closed subscheme, then so is $\beta(Y)\subset \Sing({\mathcal G}^{(n-e)})$ (\cite[8.4]{Br_V}, \cite[Lemma 1.7]{V00}).  
  
\item  Using (1) it can be shown  that for any  ${\mathcal G}^{(n)}$-local sequence (\ref{Def:GLocSeq}),  there are commutative diagrams  
\begin{equation}
\label{diagrama_compatibilidad}
\xymatrix@R=0pt@C=30pt{
\mathcal{G}^{(n)}=\mathcal{G}_0^{(n)} & \mathcal{G}^{(n)}_1 &  & \mathcal{G}^{(n)}_m \\
V^{(n)}=V^{(n)}_0     \ar[ddd]^\beta     & V^{(n)}_1   \ar[l]_{\rho_0}  \ar[ddd]^{\beta_1}   &  \cdots \ar[l]_{\rho_1} &  V^{(n)}_m \ar[l]_{\rho_{m-1}}  \ar[ddd]^{\beta_m}\\
\\
\\
V^{(n-e)}=V^{(n-e)}_0           & V^{(n-e)}_1   \ar[l]_{\overline{\rho}_0}     & \cdots \ar[l]_{\overline{\rho}_1} &  V^{(n-e)}_m \ar[l]_{\overline{\rho}_{m-1}} \\
\mathcal{G}^{(n-e)}=\mathcal{G}^{(n-e)}_0 & \mathcal{G}^{(n-e)}_1
& & \mathcal{G}^{(n-e)}_m
}
\end{equation}
of transversal projections and  transforms, such that for $i=1,\ldots,m$: 
\begin{enumerate} 
\item If $V_{i-1}^{(n)}\stackrel{\rho_{i-1}}{\longleftarrow} V_i^{(n)}$ is a permissible transformation with center 
$Y_{i-1}\subset \Sing ({\mathcal G}^{(n)}_{i-1}),$ then   $V_{i-1}^{(n-e)}\stackrel{\overline{\rho}_{i-1}}{\longleftarrow} V_i^{(n-e)}$ is the permissible blow up at $\beta_{i-1}(Y_{i-1})$ and $\beta_i: V_i^{(n)} \longrightarrow V_i^{(n-e)}$ is ${\mathcal G}^{(n)}_i$-admissible in an open subset $U_i\subset V_i^{(n)}$ containing $\Sing({\mathcal G}^{(n)}_i)$. 

\item  The Rees algebra ${\mathcal G}^{(n-e)}_i$ is an elimination algebra of ${\mathcal G}_i^{(n)}$  (i.e., the transform of an elimination algebra of  a given Rees algebra ${\mathcal G}^{(n)}$ is the elimination algebra of the transform of ${\mathcal G}^{(n)}$); 
\item  There is an inclusion of closed sets: 
\begin{equation}
\label{Inclusion_Estricta}
\beta_i(\Sing ({\mathcal G}^{(n)}_i))\subseteq \Sing ({\mathcal G}^{(n-e)}_i), 
\end{equation}
and $\Sing ({\mathcal G}^{(n)}_i)$ and $\beta_i(\Sing({\mathcal G}^{(n)}_i))$ are homeomorphic.    If the characteristic is zero then the inclusion 
(\ref{Inclusion_Estricta}) is an equality. 
 \end{enumerate}
See  \cite[Theorem 9.1]{Br_V}. 
\item Conversely, if the characteristic is zero, any ${\mathcal G}^{(n-e)}$-local sequence (\ref{Def:LocalSeq})
induces a ${\mathcal G}^{(n)}$-local sequence and    commutative diagrams 
of transversal projections and  transforms of Rees algebras as in (\ref{diagrama_compatibilidad}) 
satisfying properties (a), (b) and (c) as above. 
\end{enumerate}
\end{Parrafo}

\begin{Parrafo}  {\bf Ress algebras, elimination algebras and resolution.}  
\label{resolution_elimination} 
Consider an $n$-dimensional pair  
$(V^{(n)}, {\mathcal G}^{(n)})$, and let  $\beta: V^{(n)}\longrightarrow V^{(n-e)}$ be some ${\mathcal G}^{(n)}$-admissible projection is fixed in a neighborhood of a point $\xi \in \Sing ({\mathcal G}^{(n)})$ for some $e\geq 1$. 
\begin{enumerate}
\item When the characteristic is zero, it follows from  \ref{eliminacion_suclocal} that a resolution of ${\mathcal G}^{(n)}$ induces a resolution of ${\mathcal G}^{(n-e)}$ and vice-versa: thus finding a resolution of ${\mathcal G}^{(n)}$ is equivalent to finding a resolution of  ${\mathcal G}^{(n-e)}$.
Furthermore, ${\mathcal G}^{(n-e)}$ is the unique $\Ovne$-Rees algebra with this property up to weak equivalence.

\item When the characteristic is positive, the link between ${\mathcal G}^{(n)}$ and ${\mathcal G}^{(n-e)}$ is weaker; however notice that properties (1) and (2) in \ref{eliminacion_suclocal} still hold. In this case it can be shown that $\Gne$ is the largest $\Ovne$-Rees algebra fulfilling  properties (1) and (2). In some sense,  one can think that $\Gne$ is the  $\Ovne$-Rees algebra, that  {\em better approximates} the singular locus of $\Gn$   after considering  $\Gn$-local sequences (see \cite[6.14]{COA}).  
\end{enumerate} 
\end{Parrafo}

\begin{Parrafo}\label{hironaka_order_d}{\rm {\bf Resolutions of Rees algebras vs.  simplifications of the multiplicity.} Let $X$ be a $d$-dimensional variety, and let $(\Vn, \Gn)$ be a local presentation for the multiplicity in an (\'etale)  neighborhoud of a point $\xi\in \underline{\text{Max}} \text{ mult}_X$ as in Definition \ref{Def:LocPres}. As indicated in \ref{Def:LocPres},  a resolution of $\Gn$ induces a sequence of blow ups at equimultiple centers over $X$ that ultimately leads to a simplification of the multiplicity. 

On the other hand, by \ref{resolution_elimination}, when the characteristic is zero, finding a resolution of $\Gn$ is equivalent to finding a resolution of an elimination algebra  in some lower dimensional smooth scheme $\Vne$ (if there is one).  By \cite[Theorem 28.10]{Br_V2}  if $X$ is a variety of dimension $d$ and $(V^{(n)}, \Gn)$ is a local presentation of the multipliticity at some $\xi\in X$, then $\tau_{\Gn, \xi}\geq (n-d)$ and therefore the problem of finding a simplification of the multiplicity of $X$ is equivalent to that of finding a resolution of an elimination algebra of $\Gn$ in dimension $d$. This means that the multiplicity has a {\em local presentation in dimension $d=\text{dim }X$}. 

Furthermore, one can iterate the process of computing elimination algebras in dimensions $(n-1), \ldots, d$ and then  it can be checked that, 
$$1=\text{ord}^{(n)}_{\Gn}({\xi})=\text{ord}^{(n-1)}_{\Gn}({\xi})=\ldots =\text{ord}^{(d+1)}_{\Gn}({\xi})=1\leq \text{ord}^{(d)}_{\Gn}({\xi}),$$
(see Definition \ref{Hironaka_order_e}, \ref{EliminationProperties} (5),  Remark \ref{tau_properties} (1) and Remark \ref{nuevo}). Therefore when facing a simplification of the multiplicity of $X$ at $\xi \in \underline{\text{Max}} \text{ mult}_X  $    the first interesting invariant at $\xi$ is precisely $\text{ord}^{(d)}_{\Gn}({\xi})$ which corresponds to the order of a Rees algebra that represents the multiplicity in dimension $d$. 

\

When the charactersitic is positive, there is still a local presentation of the multiplicity of $X$ at $\xi$, $(\Vn,\Gn)$ (see Theorem \ref{Th:PresFinita}), and  the lower bound  $\tau_{\Gn, \xi}\geq (n-d)$ holds as well (see the discussion in \ref{eliminacion_finita}). One can check as before that 
$$1=\text{ord}^{(n)}_{\Gn}({\xi})=\text{ord}^{(n-1)}_{\Gn}({\xi})=\ldots =\text{ord}^{(d+1)}_{\Gn}({\xi})=1\leq \text{ord}^{(d)}_{\Gn}({\xi}).$$
But here the link between $\Gd$ and $\Gn$ is weaker. In fact, there are examples that show that it is not always  possible to give a local presentation of the multiplicity in dimension $d$ (see \cite[\S 11]{Br_G-E_V}). However, as indicated in \ref{resolution_elimination}, $\Gd$ is the Rees algebra in dimension $d$ that better approximates $\underline{\text{Max}} \text{ mult}_X$ in a neighbourhood of $\xi$ (see \ref{resolution_elimination} (2) above). This means that one way to approach a resolution of $\Gn$ may be by finding a refinement of the invariant  $\text{ord}^{(d)}_{\Gn}$, because the later is too coarse. On the other hand,  it is very natural to ask what the meaning of the rational number  $\text{ord}^{(d)}_{\Gn}({\xi})$ is in this case. It turns out, as we will show in Theorem \ref{Main_Theorem}, that it is related to the rate at which arcs in $X$ with center  $\xi$ separate from  $\underline{\text{Max}} \text{ mult}_X$. More precisely, it is connected to the sequence of Nash multiplicities of the arcs with center $\xi$.  In particular,  this number is intrinsic to $X$ (see Remark \ref{significado}).  

\

To summarize, for a given point $\xi \in \Mm$, and a local presentation of the multiplicity, $(\Vn ,\Gn)$, the invariant $\text{ord}^{(d)}_{\Gn}({\xi})$ (which does not depend on the choice of the $\Gn$-admissible projection) is defined. In addition, it can be shown that  $\text{ord}^{(d)}_{\Gn}({\xi})$ does not depend on the   choice of the local presentation either (see \cite{Br_V2}). Thus, we can  eliminate the reference to $\Gn$ and  define:
\begin{equation}
\label{orden_X_d}
\ord_{\xi}^{(d)}(X):=\text{ord}^{(d)}_{\Gn}({\xi})
\end{equation}
where $(\Vn, \Gn)$ is any local presentation of $\Mm$ in a neighborhood of $\xi$. 
}
\end{Parrafo}

 \section{Jets, arcs, and valuations} \label{Jet_Arcs}

\begin{Def}
Let $Z$ be an arbitrary scheme over a field $k$, and let $K\supset k$ be a field extension.
An {\em m-jet  in   $Z$} is a morphism $\alpha: \text{Spec}K[|t|]/\langle t^{m+1}\rangle\to Z$ for some  $m\in {\mathbb N}$. 
	 \end{Def}
If ${\mathcal S}ch/k$ denotes the category of $k$-schemes and ${\mathcal S}et$ the category of sets, then the contravariant functor:
$$\begin{array}{rcl} 
 {\mathcal S}ch/k & \longrightarrow & {\mathcal S}et\\
 Z  & \mapsto & \text{Hom}_k(Z\times_{\text{Spec}(k)}\text{Spec}(k|[t|]/\langle t^{m+1}\rangle), Z)
\end{array}
$$
is representable by a scheme  ${\mathcal L}_m(Z)$. If $Z$ is of finite type over $k$, then so is  ${\mathcal L}_m(Z)$ (see \cite{Vojta}). For each pair $m\geq m'$ there is the (natural) truncation  map ${\mathcal L}_{m}(Z) \to {\mathcal L}_{m'}(Z)$. In particular, for $m'=0$, ${\mathcal L}_{m'}(Z)=Z$ and we will denote by ${\mathcal L}_m(Z)_{\xi}$ the fiber of the (natural) truncation map over a point $\xi\in Z$. Finaly, if $Z$ is smooth over $k$ then ${\mathcal L}_m(Z)$ is also smooth over $k$ (see \cite{I2}).

By taking the inverse limit of the ${\mathcal L}_m(Z)$,  the {\em arc space of $Z$} is defined,  
$${\mathcal L}(Z):=\lim_{\leftarrow}{\mathcal L}_m(Z).$$ This is the scheme representing the functor 
$$\begin{array}{rcl} 
 {\mathcal S}ch/k & \longrightarrow & {\mathcal S}et\\
 Z  & \mapsto & \text{Hom}_k(Z\tilde{\times}\text{Spf}(k|[t|]), Z).
\end{array}
$$

(see \cite{Bhatt}). 

\begin{Def}\label{arco_def}
	A $K$-point in ${\mathcal L}(Z)$ is called {\em an arc of $Z$} and can be seen as   a morphism $\varphi: \text{Spec}(K[|t|])\to Z$ for some $K\supset k$.  The  image by $\varphi$ of the closed point $\langle 0\rangle$ is called the {\em center of the arc $\varphi$}.  If the center of $\varphi$ is $\xi\in Z$ then it induces a $k$-homomorphism  
	${\mathcal O}_{Z,\xi}\to K[|t|]$ which we will denote by $\varphi$ too; in this case the image by $\varphi$ of the maximal ideal ,  $\varphi({\mathfrak m}_{\xi})$,  generates an ideal $\langle t^m\rangle\subset K[|t|]$ and  then we will say that {\em the order of $\varphi$ is $m$} and will denote it by $\nu_t(\varphi)$.   We will denote  by ${\mathcal L}(Z)_{\xi}$ the set of arcs in  ${\mathcal L}(Z)$ with center $\xi$. The {\em generic point of $\varphi$ in $Z$} is the point in $Z$ determined by the  $\text{kernel}$ of $\varphi$.
\end{Def}

\begin{Def}
We say that an arc 	$\varphi: \text{Spec}(K[|t|])\to Z$ is {\em thin} if it factors through a  proper closed suscheme of $Z$. Otherwise we say that $\varphi$ is {\em fat}. 
	
\end{Def}

\begin{Def} If $Z$ is an  (irreducible)  algebraic variety and  $\alpha: \text{Spec}(K[|t|])\to Z$ is fat, then it defines a discrete valuation  on the quotient field $K(Z)$ of $Z$. This is the {\em valuation corresponding to $\alpha$}. If $\alpha$ is thin, then it defines a valuation in the quotient field $K(Y)$ of some  (irreducible) subvariety $Y\subset Z$. 
\end{Def}

\begin{Def}
Let $\varphi :\mathrm{Spec}(K[[t]])\longrightarrow \mathrm{Spec}(B)$ in $\mathrm{Spec}(B)$ and let $\mathcal{G}=B[g_1W^{b_1},\ldots g_rW^{b_r}]\subset B[W]$ be a $B$-Rees algebra. We define 
$$\varphi({\mathcal{G}}):=K[[t]][\varphi (g_1)W^{b_1},\ldots ,\varphi (g_{n-d})W^{b_{n-d}}]\subset K[[t]][W]\mbox{.}$$
\end{Def}

\begin{Parrafo} \label{entero_arcos}{\bf Integral closure of Rees algebras and arcs.} Let $k$ be a field,  let $B$ be a (not necessary smooth)  reduced $k$-algebra, and let $\G$ be a Rees algebra over $B$. Set $X=\text{Spec}(B)$. For any arc 
$\varphi\in {\mathcal L}_{\infty}(X)$, $\varphi: B\to K[|t|]$, with $k\subset K$ a extension field,   the image via $\varphi$ of $\G$ generates a Rees algebra over $K[|t]]$.  It is clear that, since $\G\subset \overline{\G}$,   the order of the Rees algebra $\varphi(\G)$ at the maximal ideal $\langle t\rangle$, $\ord_t(\varphi(\G))$,   is larger than or equal to  $\ord_t(\varphi(\overline{\G}))$ (here we mean the order as Rees algebras  as in  \ref{Def:HirOrd}). We claim that in fact, 
\begin{equation}
\label{igualdad}
\ord_t(\varphi(\G))=\ord_t(\varphi(\overline{\G})).
\end{equation}
To check the equality, suppose that $\ord_t(\varphi(\G))=s\in {\mathbb Q}$ and  let  $fW^n\in \overline{\G }$.  Then there exist some elements $a_iW^{ni}\in \mathcal{G}$, for $i=1,\ldots ,l$, such that 
\begin{equation}
\label{dependencia}
(fW^n)^l+a_1W^n(fW^n)^{l-1}+\ldots +a_lW^{nl}=0\mbox{.}
\end{equation}
Let $r=\nu_t(\varphi(f))$ be the (usual) order of $\varphi(f)$ at $\langle t\rangle$. We will show that $\frac{r}{n}\geq s$, which will give us the equality in (\ref{igualdad}). 

On the one hand, from  the way in which the coefficients $a_i$ are chosen in (\ref{dependencia}), one has that for $i=1,\ldots, l$, 
\begin{equation}
\label{pesos_coeficientes}
\frac{\nu _t(\varphi (a_i))}{ni}\geq s\mbox{.}
\end{equation}
On the other, from equation (\ref{dependencia}) it follows that there must be an index $i\in \{1,\ldots, l\}$ such that 
\begin{equation}
\label{eq:ord_resta1}
\nu_t(\varphi(a_i(f)^{l-i}))=rl. 
\end{equation}

Now suppose, contrary to our claim, that $\frac{r}{n}<s$. Then, by (\ref{pesos_coeficientes}), for $i=1,\ldots ,l$,
$$\nu _t(\varphi(a_i(f)^{l-i}))\geq sni+r(l-i)=rl+i(sn-r)>rl\mbox{,}$$
which contradicts (\ref{eq:ord_resta1}). 
\end{Parrafo}

\section{Nash multiplicity sequences, persistance, and the algebra of contact}\label{RAandNash}

 \subsection*{Nash multiplicity sequences}
 
Let $X$ be an algebraic variety defined over a   perfect field $k$    and  let  $\xi \in \Mm $ be a (closed) point.   Assume that $X$ is locally  a hypersurface in a neighborhood of $\xi$,  $X\subset V$, where   $V$ is smooth over $k$   and work at the completion $\widehat{\mathcal O}_{V,{\mathfrak m}_{\xi}}$. Under these hypotheses,  in \cite{L-J}, Lejeune-Jalabert introduced the   {\em Nash multiplicity sequence  along an arc $\varphi\in {\mathcal L}(X)_{\xi}$}.  This is a non-increasing sequence of non-negative integers 
\begin{equation}
\label{Nash_sequence}
m_0\geq m_1\geq \ldots \geq m_l=m_{l+1}=...\geq 1, 
\end{equation}
where  $m_0$ is the usual multiplicity of $X$ at $\xi$, and the rest of the terms  are computed by considering  suitable stratifications on ${\mathcal L}_{m}(X)_{\xi}$  defined via the action of certain differential operators on the fiber of the jets spaces  ${\mathcal L}_{m}(\text{Spec}(\widehat{\mathcal O}_{V,{\mathfrak m}_{\xi}}))$ over $\xi$ for $m\in {\mathbb N}$.  The sequence (\ref{Nash_sequence})   can be interpreted,  in some sense,  as  the {\em multiplicity of $X$ along the arc $\varphi$}: thus it can be seen as a refinement of the usual multiplicity.   The sequence stabilizes at the value given by the multiplicity $m_l$ of  $X$ at the generic point of the arc $\varphi $ in $X$ (see \cite[\S 2, Theorem 5]{L-J}).  

\vspace{0.2cm}

In \cite{Hickel05}, Hickel generalized Lejeune's construction  to the case of an arbitrary variety $X$ and presented   the sequence (\ref{Nash_sequence}) in a different way  which we will explain along the following lines.

\vspace{0.2cm}

Since the  arguments are of local nature, let us suppose that  $X=\text{Spec}(B)$  is a $d$-dimensional variety defined over a perfect field $k$. Let $\xi \in \Mm $ be a point (which we may assume to be closed) of multiplicity $m=\mm $, and let $\varphi $ be an arc in $X$ centered at $\xi $. Consider   the natural morphism   
$$\Gamma _0=\varphi \otimes i:B\otimes_k k[t]\rightarrow K[[t]]\mbox{,}$$
which is additionally an arc in $X_0=X\times \mathbb{A}^1_k$ centered at the point $\xi _0=(\xi ,0)\in X_0$. These elements determine completely a sequence of blow ups at points:
\begin{equation}\label{intro:diag:Nms}
\xymatrix@R=15pt@C=50pt{
\mathrm{Spec}(K[[t]]) \ar[dd]^{\Gamma _0} \ar[ddr]^{\Gamma _1} \ar[ddrrr]^{\Gamma _l} & & & & \\
& & & & \\
X_0=X\times \mathbb{A}^1_k & X_1 \ar[l]_>>>>>{\pi _1} & \ldots \ar[l]_{\pi _2} & X_l \ar[l]_{\pi _l} & \ldots \\
\xi _0=(\xi ,0) & \xi _1 & \ldots & \xi _l & \ldots 
}\end{equation}
Here, $\pi _i $ is the blow up of $X_{i-1}$ at $\xi _{i-1}$, where $\xi _{i}=\mathrm{Im}(\Gamma _{i})\cap \pi _{i}^{-1}(\xi _{i-1})$ for $i=1,\ldots ,l,\ldots $, and $\Gamma _i$ is the (unique) arc in $X_i$ with center $\xi _i$ which is obtained by lifting $\Gamma _0$ via the proper morphism $\pi _i\circ \ldots \circ \pi _1$.     This sequence of blow ups defines a non-increasing sequence
\begin{equation}
\label{Nash_sequence_2}
m_0\geq m_1\geq \ldots \geq m_l=m_{l+1}=...\geq 1, 
\end{equation}
where $m_i$ corresponds to the multiplicity of $X_i$ at $\xi _i$ for each $i=0,\ldots ,l,\ldots $. Note that $m_0$ is nothing but the multiplicity of $X$ at $\xi $, and it is proven that for hypersurfaces  the sequence (\ref{Nash_sequence_2}) coincides with the sequence (\ref{Nash_sequence}) above.  We will refer to the sequence of blow ups in (\ref{intro:diag:Nms}) as the {\em sequence of blow ups directed by $\varphi$}.

\begin{Rem}\label{Hickel_construction}
	Using Hickel's construction, it can be checked that the  first index $i\in \{1,\ldots, l+1\}$ for which there is a strict inequality   in (\ref{Nash_sequence_2})  (i.e., the first index $i$ for which  $m_0>m_i$) can be interpreted as the minimum   number of steps needed to {\em separate  the graph of $\varphi$ from } $\text{\underline{Max} mult}_{X_0}$   by blow ups  \footnote{Actually, to be precise, this statement has to be interpreted  in $B\otimes K[|t|]$, where the graph of $\varphi$ is defined.}. This will necessarily be a finite number as long as the generic point of $\varphi$ is not contained in $\Mm$.
	\end{Rem}

\subsection*{The persistance and its link to Hironaka's order function}

Let $X$ be an algebraic variety defined over a perfect field $k$ and let $\xi\in \Mm$ be a point of multiplicity $m$. Let $\varphi\in {\mathcal L}(X)_{\xi}$, and consider, as in (\ref{Nash_sequence_2}), the Nash multiplicity sequence along $\varphi $. For the purposes of this paper, we will pay attention to the first time that the Nash multiplicity drops below $m$ (see Remark \ref{Hickel_construction} above). The contents of this subsection where in part  developed in \cite{Br_E_P-E}, but we include the whole argument here to facilitate the reading of the paper.

\begin{Def}\label{def:rho}
	Let $\varphi $ be an arc in $X$ with center $\xi \in \Mm$ 
		 whose generic point is not contained in $\Mm$. We denote by $\rho _{X,\varphi }$ the minimum number of blow ups directed by $\varphi $ which are needed to lower the  Nash multiplicity of  $X$ at $\xi $. That is, $\rho _{X,\varphi }$ is such that $m=m_0=\ldots =m_{\rho _{X,\varphi }-1}>m_{\rho _{X,\varphi }}$ in the sequence (\ref{Nash_sequence_2}) above. We call $\rho _{X,\varphi }$ the \textit{persistance of $\varphi $ in  $\Mm $}. We denote by $\rho _{X}(\xi )$ the infimum of the number of blow ups directed by some arc in $X$ through $\xi $ needed to lower the Nash multiplicity   at $\xi$:
	\begin{align*}
	\rho _X: \mathrm{\underline{Max}\; mult}_X & \longrightarrow \mathbb{N}\\
	\xi & \longmapsto \rho _X(\xi )=\inf _{\varphi \in \mathcal{L}(X)_{\xi }}\left\{ \rho _{X,\varphi }\right\} \mbox{.}
	\end{align*}
\end{Def}

To keep the notation as simple as possible, $\rho _{X,\varphi }$ does not contain a reference to the point $\xi $, even though it is clear that it is local. However, the point is determined by $\varphi $, and hence it is implicit, although not explicit in the notation. Similarly, we may refer to $\rho _X(\xi )$ as $\rho _X$ once the point is fixed.\\
\\
Let us define normalized versions of $\rho _{X,\varphi }$ and $\rho _X$ in order to avoid the influence of the order of the arc in the number of blow ups needed to lower the  Nash multiplicity. 

\begin{Def}\label{def:rho_bar}  
	For a given arc $\varphi: \text{Spec}(K[|t|])\to X$   with center $\xi\in \Mm$, we will write
	$$\bar{\rho }_{X,\varphi }=\frac{\rho _{X,\varphi }}{\nu_t(\varphi )}\mbox{, } \ \mbox{ and } \ \bar{\rho }_{X}(\xi )=\inf _{\varphi \in \mathcal{L}(X)_{\xi }}\left\{ \bar{\rho }_{X,\varphi }\right\} \mbox{,}$$
	where $\nu_t(\varphi)$ denotes the oder of the arc, i.e., the usual order of $\varphi({\mathfrak m}_{\xi})$ at $K[|t|]$. 
\end{Def}

\begin{Rem} \label{significado} 
	As we will see in Section \ref{Nash_Hironaka} (see (\ref{limite_ord_d})), the value at a point $\xi\in \Mm$ of Hironaka's order function in dimension $d$ (see \ref{hironaka_order_d}) can be read from the numbers in  Definition \ref{def:rho_bar} above. In fact, the expression (\ref{limite_ord_d})  gives an intrinsic definition of this rational number and provides at the same time a geometrical meaning for it  (see Remark \ref{Hickel_construction}). 
\end{Rem}

\subsection*{The algebra of contact and the order of contact}

In the present  section, we will show that for $X$, $\xi \in \Mm$ and $\varphi \in \mathcal{L}(X)_{\xi} $, we can attach a Rees algebra to the sequence of blow ups directed by $\varphi $ (see (\ref{intro:diag:Nms})). From this algebra, we will define a new quantity, $r_{X,\varphi }$ (see Definition \ref{def:r}), which is a refinement of $\rho _{X,\varphi }$. In particular, $\rho _{X,\varphi }$ is obtained by taking the integral part of $r_{X,\varphi }$ (see Proposition \ref{thm:r_elim_amalgama}).

To define $r_{X,\varphi }$, we need to introduce the \textit{algebra of contact} of $\varphi $ with $\Mm $. This was carefully developed in \cite[Section 4]{Br_E_P-E} for varieties defined over fields of characteristic zero.   However, all of the contents of that section are also valid over perfect fields of arbitrary characteristic. We refresh here the notation used there,  and refer to the results which are characteristic free. 

\vspace{0.2cm}

\begin{Parrafo}\label{notacion_setting}  {\bf Notation and setting.}  Recall that, locally, in an (\'etale) neighborhood\footnote{which we will also denote by $X$.} of $\xi \in \Mm$, it is possible to find an immersion $X\hookrightarrow V^{(n)}$ and an $\mathcal{O}_{V^{(n)},\xi }$-Rees algebra   $\G^{(n)}$, which we may assume to be differentially closed, representing the multiplicity of $X$. That is, such that $\Sing (\G _X^{(n)})=\Mm $, and so that this condition is preserved by $\G _X^{(n)}$-local sequences over $V^{(n)}$ as long as the maximum multiplicity does not decrease (see   Theorem \ref{Th:PresFinita} and the discussion in  \ref{PresFinita}). Consider $X_0=X\times \mathbb{A}_k^1$ as in (\ref{intro:diag:Nms}). After the product by $\mathbb{A}_k^1$, there is also an immersion $X_0\hookrightarrow V^{(n)}\times \mathbb{A}_k^1=V_0^{(n+1)}$, and  $\G^{(n)}$,   can be extended
to   the   smallest Rees algebra $\G^{(n+1)}_0$ over $\mathcal{O}_{V_0^{(n+1)}}$ containing $\G^{(n)}$, which moreover represents the multiplicity of $X_0$ locally in an (\'etale) neighborhood of $\xi _0=(\xi ,0)$.    Notice that $\G^{(n+1)}_0$ is  also differentially closed.  

\vspace{0.2cm}

The sequence of blow ups (\ref{intro:diag:Nms}) directed by $\varphi $ induces also a sequence of point blow ups for $V_0^{(n+1)}$:

\small
 \begin{equation}\label{diag:perm_seq1c}
\xymatrix{ 
(V_0^{(n+1)},\xi _0) & & (V_1^{(n+1)},\xi _1) \ar[ll]_{\pi _1} & & \ldots \ar[ll]_>>>>>>>>>>{\pi _2} & & (V_r^{(n+1)},\xi _r) \ar[ll]_<<<<<<<<<{\pi _r} \\
(X_0^{(d+1)},\xi _0) \ar@{^{(}->}[u] & & (X_1^{(d+1)},\xi _1) \ar[ll]_{\left. \pi _1\right| _{X_1^{(d+1)}}} \ar@{^{(}->}[u] & & \ldots \ar[ll]_>>>>>>>>>>{\left. \pi _2\right| _{X_2^{(d+1)}}} & & (X_r^{(d+1)},\xi _r) \ar[ll]_<<<<<<<<<{\left. \pi _r\right| _{X_r^{(d+1)}}} \ar@{^{(}->}[u] \\
(\mathrm{Spec}(K[[t]]),0) \ar[u]^{\Gamma _0} & & (\mathrm{Spec}(K[[t]]),0) \ar[ll]_{id} \ar[u]^{\Gamma _1} & & \ldots \ar[ll]_>>>>>>>>>>>>{id} & & (\mathrm{Spec}(K[[t]]),0)\mbox{.} \ar[ll]_<<<<<<<<<<<{id} \ar[u]^{\Gamma _r} \\
}
\end{equation}
\normalsize

Observe that the arc $\Gamma_0$ naturally induces  another   arc,   the graph of $\varphi$,   
\begin{equation}\label{gamma0}
	\tilde{\Gamma }_0=\varphi\otimes_k \text{Id}: (\mathcal{O}_{X,{\xi}}\otimes _{k} K[[t]])_{{\tilde{\xi}_0}}\longrightarrow K[[t]], 
	\end{equation}
(where $\tilde{\xi}_0$ denotes the point $(\xi,0)$ in $\text{Spec} (\mathcal{O}_{X,{\xi}}\otimes _{k} K[[t]])$)   and also  a commutative diagram,  
\begin{equation}\label{diag:triangle}
\xymatrix@C=0.5pc@R=1pc{\mathcal{O}_{V^{(n)},{\xi} } \ar[d] \ar[rrr]  & & & \mathcal{O}_{V_0^{(n+1)},{\xi _0}}  \ar[rrrrrrr]  \ar[d] & & & & & & & (\mathcal{O}_{V^{(n)},{\xi} }\otimes _{k} K[[t]])_{\tilde{\xi _0}} \ar[d] \ar[rrrrd]^{\tilde{\Delta}_0} &  & & &  \\
	\mathcal{O}_{X^{(d)},\xi } 
	\ar[rrr] & & & \mathcal{O}_{X_0^{(d+1)},{\xi _0}} \ar[rrrrrrr] 
	 & & & & & & & (\mathcal{O}_{X^{(d)},{\xi}}\otimes _{k} K[[t]])_{\tilde{\xi _0}}
	 \ar[rrrr]^{\tilde{\Gamma}_0} & & &  & K[[t]].  
}
\end{equation}

Now set,
\begin{equation}
\label{def_v}
\tilde{V}_0^{(n+1)}=\mathrm{Spec}(\mathcal{O}_{V^{(n)},{\xi} }\otimes _{k} K[[t]])_{\tilde{\xi _0}} \ \text{ 
and  } \ \tilde{X}_0^{(d+1)}=\mathrm{Spec}(\mathcal{O}_{X^{(d)},{\xi}}\otimes _{k} K[[t]])_{\tilde{\xi _0}}\mbox{,}
\end{equation}

and let $C_0\subset \tilde{X}_0\subset \tilde{V}_0^{(n+1)}$ be the     regular   curve defined by $\tilde{\Gamma }_0$, that is, the closure of the generic point of the arc $\tilde{\Gamma }_0$.   Let  $y_1,\ldots, y_n$ be a regular system of parameters at ${\mathcal O}_{V^{(n)},{\xi}}$. Their images at  ${\mathcal O}_{\tilde{V}_0^{(n+1)},\tilde{\xi}_0}$,  say $\tilde{y}_1,\ldots, \tilde{y}_n$, are part of a regular system of parameters    ${\mathcal O}_{\tilde{V}_0^{(n+1)},\tilde{\xi}_0}$, and moreover,  
\begin{equation}
\label{parameters}
\langle  \tilde{y}_1,\ldots, \tilde{y}_n, t \rangle ={\mathfrak m}_{\tilde{\xi}_0} \subset {\mathcal O}_{\tilde{V}_0^{(n+1)},\tilde{\xi}_0}.
\end{equation}
 Set $h_i=\tilde{y}_i-\varphi({\tilde{y}_i})\in {\mathcal O}_{\tilde{V}^{(n+1)}_0}$ for $i=1,\ldots, n$. Then $C_0$ is (the    regular   curve) defined  in $\tilde{V}_0^{(n+1)}$ by the ideal 
\begin{equation}\label{eq:h_i_arc}
\langle h_1,\ldots , h_n\rangle\mbox{.}\end{equation}  
Thus the arc $\varphi $ naturally induces also a sequence of blow ups at points for $\tilde{V}_0^{(n+1)}$ and $C_0$:
\begin{equation}\label{diag:perm_seq2}
\xymatrix{ 
	(\tilde{V}_0^{(n+1)},\xi _0) & & (\tilde{V}_1^{(n+1)},\xi _1) \ar[ll]_{\tilde{\pi }_1} & & \ldots \ar[ll]_>>>>>>>>>>{\tilde{\pi }_2} & & (\tilde{V}_r^{(n+1)},\xi _r) \ar[ll]_<<<<<<<<<{\tilde{\pi }_r} \\
	(\tilde{X}_0^{(d+1)},\xi _0) \ar@{^{(}->}[u] & & (\tilde{X}_1^{(d+1)},\xi _1) \ar[ll]_{\left. \tilde{\pi }_1\right| _{\tilde{X}_1^{(d+1)}}} \ar@{^{(}->}[u] & & \ldots \ar[ll]_>>>>>>>>>>{\left. \tilde{\pi }_2\right| _{\tilde{X}_2^{(d+1)}}} & & (\tilde{X}_r^{(d+1)},\xi _r) \ar[ll]_<<<<<<<<<{\left. \tilde{\pi }_r\right| _{\tilde{X}_r^{(d+1)}}} \ar@{^{(}->}[u] \\
	(C_0,\xi _0) \ar@{^{(}->}[u] & & (C_1,\xi _1) \ar[ll]_{\left. \tilde{\pi }_1\right| _{C_1}} \ar@{^{(}->}[u] & & \ldots \ar[ll]_>>>>>>>>>>>>{\left. \tilde{\pi }_2\right| _{C_2}} & & (C_r,\xi _r) \ar[ll]_<<<<<<<<<<<{\left. \tilde{\pi }_r\right| _{C_r}}, \ar@{^{(}->}[u] 
}
\end{equation}
where $C_i$ denotes the strict transform of $C_{i-1}$ for $i=1,\ldots, r$. Finally, we define the Rees algebra 
$${\mathcal C}_0:={\mathcal O}_{\tilde{V}_0^{(n+1)},\tilde{\xi}_0}[h_1W, \ldots, h_nW]$$
 on $\tilde{V}_0^{(n+1)}$, so that $\mathrm{Sing}({\mathcal C}_0)=C_0$. 
Observe that for any ${\mathcal C}_0$-local sequence over  $\tilde{V}_0^{(n+1)}$  in the sense of Definition \ref{Def:GLocSeq}\footnote{Although Definition \ref{Def:GLocSeq} is stated for smooth schemes, it is equaly valid for regular schemes.}, 
$$\xymatrix@R=2pt@C=30pt{(\tilde{V}_0^{(n+1)}, {\mathcal C}_0) & \ar[l] (\tilde{V}_1^{(n+1)}, {\mathcal C}_1) &  \ar[l] \ldots & \ar[l] (\tilde{V}_s^{(n+1)}, {\mathcal C}_s) \\   &  & &  \\ 
	C_0 \ar@{^{(}->}[uu] & 	\ar[l] C_1 \ar@{^{(}->}[uu] &  \ar[l] \ldots & 	\ar[l]C_s \ar@{^{(}->}[uu]
}$$
one has that $\Sing({\mathcal C}_i)=C_i$, where $C_i$ is the strict transform of $C_0$ in $\tilde{V}_i^{(n+1)}$ for $i=1,\ldots, s$. 
\end{Parrafo}

\begin{Def} \label{def:AlgContact}
Consider the same notation and  setting as in \ref{notacion_setting}. 
By an \textit{algebra of contact of $\varphi $ with $\Mm $}   on $\tilde{V}_0^{(n+1)}$,   we mean an $\mathcal{O}_{C_0}$-Rees algebra ${\mathcal H}$ such that 
\begin{equation}\label{form:intersection}
 \Sing ({\mathcal H})= C_0 \cap \left\{ \eta \in \tilde{X}_0: \mathrm{mult}_{\eta }(\tilde{X}_0)=m\right\} = \Sing ({\mathcal C}_0)\cap \Sing ({\mathcal G}^{(n+1)}_0)\subset C_0\mbox{,}
\end{equation}
and such that for any local sequence on $\tilde{V}_0^{(n+1)}$ that is both $\G^{(n+1)}_0$-local and ${\mathcal C}_0$-local, 
$$\xymatrix@R=2pt@C=30pt{\G^{(n+1)}, {\mathcal C}_0  & \G^{(n+1)}_{1}, {\mathcal C}_1  & & \G^{(n+1)}_{s},  {\mathcal C}_s\\ 
\tilde{V}_0^{(n+1)}  & \ar[l] \tilde{V}_1^{(n+1)} &  \ar[l] \ldots & \ar[l]  \tilde{V}_s^{(n+1)}\\  &  & &  \\ 
		 C_0 \ar@{^{(}->}[uu]  & 	  \ar[l] C_1  \ar@{^{(}->}[uu] &   \ar[l] \ldots    & 		 \ar[l] C_s \ar@{^{(}->}[uu] }$$
one has that  
$$\Sing ({\mathcal H} _i)=C_i \cap \left\{ \eta \in \tilde{X}_i^{(d+1)}: \mathrm{mult}_{\eta }(\tilde{X}_i^{(d+1)})=m\right\}=\Sing ({\mathcal C}_i)\cap \Sing (\G^{(n+1)}_i) \subset C_i$$ for $i=1,\ldots ,s$. 

\end{Def}

\begin{Rem} From the previous definition it follows that: 
\begin{enumerate}
	\item[(i)] Lowering the Nash multiplicity along an arc $\varphi $ in $X$ at $\xi \in \Mm $ below $m=\mm $, is   equivalent to resolving the Rees algebra ${\mathcal H}$, and consequently $\rho _{X,\varphi }$ as in Definition \ref{def:rho} is the number of induced transformations by (\ref{intro:diag:Nms}) of this Rees algebra ${\mathcal H}$ which are necessary to resolve it (see Definition \ref{def:res_RA}).
	\item[(ii)] From the way in which it has been defined, the algebra of contact of $\varphi $ with $\Mm $, if it exists, is unique up to weak equivalence. Therefore, the order of {\em any} algebra of contact of $\varphi$ with $\Mm$ at $\tilde{\xi}_0$ is the same (this follows from Hironaka's Trick \cite[7.1]{E_V97}). This motivates the following definition. 
\end{enumerate}	
\end{Rem}

\begin{Def}\label{def:r}
	Let $X$ be a variety, and let $\varphi $ be an arc in $X$ through $\xi \in \Mm $  as in  \ref{notacion_setting}.  We define the \textit{order of contact} of $\varphi $ with $\Mm $ as the order\footnote{As we have done before, we will write $\xi $ for the image of $\xi $ under most of the morphisms we use, as long as the identification between both points is clear.} at $\xi $ of   any algebra of contact of $\varphi$ with $\Mm$, and denote it by 
	$r_{X,\varphi }$.  
	Normalizing $r_{X,\varphi }$ by the order of the arc (see Definition \ref{arco_def}) we define:
	\begin{equation} \label{formula_r}
	\bar{r}_{X,\varphi }=\frac{r_{X,\varphi }}{\nu_t(\varphi )}\in \mathbb{Q}\mbox{.}
	\end{equation}
	Let us denote 
	\begin{equation}\label{eq:set_r}
	\Phi _{X,\xi }=\left\{ \overline{r}_{X,\varphi } \right\} _{\varphi }\subset \mathbb{Q}_{\geq 1}\mbox{,}
	\end{equation}
	where $\varphi $ runs over all arcs in $X$ with center $\xi $ whose generic point is not contained in $\Mm $.
\end{Def}

The next result guarantees the existence of algebras of contact: 

\begin{Prop}\label{prop:eq_amalgama_ordenes} 
	Let $X$ be a variety defined over a perfect field $k$, let $\xi $ be a point in $\Mm $, and let $\varphi $ be an arc in $X$ through $\xi $ with the hypotheses and notation in \ref{notacion_setting}. Then 
	 the restriction of the differential Rees algebra ${\mathcal G}^{(n+1)}_0$ to ${\mathcal O}_{C_0}$ is an algebra of contact  of $\varphi $ with $\Mm $. 
\end{Prop}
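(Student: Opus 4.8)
The plan is to verify the two defining conditions of an algebra of contact (Definition \ref{def:AlgContact}) for the Rees algebra $\mathcal{H}:=(\mathcal{G}^{(n+1)}_0)_{|_{\mathcal{O}_{C_0}}}$, i.e. the restriction (pull-back) of $\mathcal{G}^{(n+1)}_0$ to the regular curve $C_0\subset \tilde{V}_0^{(n+1)}$. First I would address the singular-locus equality $\Sing(\mathcal{H})=\Sing(\mathcal{C}_0)\cap\Sing(\mathcal{G}^{(n+1)}_0)$. Since $\Sing(\mathcal{C}_0)=C_0$ and, by the choice of $\mathcal{G}^{(n+1)}_0$ as a representative of the multiplicity of $X_0$ (see \ref{notacion_setting}), $\Sing(\mathcal{G}^{(n+1)}_0)=\mathrm{\underline{Max}\,mult}_{X_0}$ in an étale neighborhood of $\tilde{\xi}_0$, the right-hand side is exactly $C_0\cap\{\eta\in\tilde{X}_0:\mathrm{mult}_\eta(\tilde{X}_0)=m\}$. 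The key point is that restricting a Rees algebra to a regular closed subscheme intersects the singular locus with that subscheme: $\Sing(\mathcal{G}_{|_{C_0}})=\Sing(\mathcal{G}^{(n+1)}_0)\cap C_0$. This is because the order of an ideal can only go up under passing to a quotient by part of a regular system of parameters, so $\nu_\eta(I_l\mathcal{O}_{C_0})\geq \nu_\eta(I_l)\geq l$ forces $\eta\in\Sing(\mathcal{G}^{(n+1)}_0)$, and conversely on $C_0$ the curve is smooth so the orders along $C_0$ are controlled; one should invoke here that $C_0$ is regular and contained in $\tilde{V}_0^{(n+1)}$ transversally enough, or more simply just argue by the order inequality in both directions using that $h_1,\dots,h_n$ cut out $C_0$.

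Next I would handle the stability under local sequences. Given a local sequence on $\tilde{V}_0^{(n+1)}$ that is simultaneously $\mathcal{G}^{(n+1)}_0$-local and $\mathcal{C}_0$-local, the centers $Y_i$ lie in $\Sing(\mathcal{C}_i)=C_i$, so after each blow up $C_i$ is the strict transform of $C_0$, and the transform of $\mathcal{C}_0$ remains the algebra whose singular locus is $C_i$ (this was observed in \ref{notacion_setting}). The heart of the matter is that the operations "transform of a Rees algebra" and "restrict to $C_i$" commute: the transform of $\mathcal{G}^{(n+1)}_i$ restricted to $C_i$ coincides with the restriction of $\mathcal{G}^{(n+1)}_0$ to $C_0$ transformed along the induced sequence on curves. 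This is a standard compatibility of weighted transforms with pull-back to a regular subscheme meeting the center properly, and it is precisely what makes $\mathcal{H}_i:=(\mathcal{G}^{(n+1)}_i)_{|_{C_i}}$ have the desired singular locus $\Sing(\mathcal{C}_i)\cap\Sing(\mathcal{G}^{(n+1)}_i)=C_i\cap\{\eta\in\tilde{X}_i^{(d+1)}:\mathrm{mult}_\eta(\tilde{X}_i^{(d+1)})=m\}$ at every step, by applying the first part's argument to each $(\tilde{V}_i^{(n+1)},\mathcal{G}^{(n+1)}_i,C_i)$.

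The main obstacle I anticipate is making rigorous the commutation of weighted transforms with restriction to $C_i$, since the blow-up centers are points (not necessarily meeting $C_i$ in the "generic" way) and the exceptional divisors on $\tilde{V}_i^{(n+1)}$ pull back to exceptional divisors on $C_i$ only after checking that $C_i$ is not contained in the exceptional locus — which holds because $C_i$ is the strict transform of a curve and the construction (\ref{diag:perm_seq2}) keeps $\xi_i\in C_i$. One must also use that $\mathcal{G}^{(n+1)}_0$ is differentially closed so that $\Sing(\mathcal{G}^{(n+1)}_0)$ behaves well (Remark \ref{order_int_diff}) and so that the identification of $\Sing(\mathcal{G}^{(n+1)}_0)$ with $\mathrm{\underline{Max}\,mult}_{X_0}$ persists under local sequences, which is the content of the local presentation property in \ref{notacion_setting}. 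Once these compatibilities are in place, the equality $\Sing(\mathcal{H}_i)=\Sing(\mathcal{C}_i)\cap\Sing(\mathcal{G}^{(n+1)}_i)$ follows formally from $\Sing(\mathcal{G}_1\odot\mathcal{G}_2)=\Sing(\mathcal{G}_1)\cap\Sing(\mathcal{G}_2)$ applied on $C_i$, together with the first part's local computation, completing the verification that $(\mathcal{G}^{(n+1)}_0)_{|_{\mathcal{O}_{C_0}}}$ is an algebra of contact.
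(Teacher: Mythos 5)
Your overall plan --- verifying Definition \ref{def:AlgContact} directly for $\mathcal{H}=(\G^{(n+1)}_0)_{|\mathcal{O}_{C_0}}$ --- has two genuine gaps. First, in the base case your justification of $\Sing\bigl((\G^{(n+1)}_0)_{|C_0}\bigr)=\Sing(\G^{(n+1)}_0)\cap C_0$ is reversed: the inequality $\nu_\eta(I_l\mathcal{O}_{C_0})\geq\nu_\eta(I_l)$ only yields the easy inclusion $\Sing(\G^{(n+1)}_0)\cap C_0\subseteq\Sing\bigl((\G^{(n+1)}_0)_{|C_0}\bigr)$; it does not let you deduce $\nu_\eta(I_l)\geq l$ from $\nu_\eta(I_l\mathcal{O}_{C_0})\geq l$. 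In fact the claim ``restricting a Rees algebra to a regular closed subscheme intersects the singular locus with that subscheme'' is false in general: for $\G=\mathcal{O}_{\mathbb{A}^2}[xW^2]$ and the regular curve $x=y^2$, the origin lies in the singular locus of the restriction but not in $\Sing(\G)$. The nontrivial inclusion at time zero genuinely uses that $\G^{(n+1)}_0$ is differentially closed (if $fW^s\in\G^{(n+1)}_0$ has $\nu_\eta(f)=a<s$, then some $D\in\Diff^{a}$ gives $D(f)W^{s-a}\in\G^{(n+1)}_0$ with $D(f)$ a unit at $\eta$, and units restrict to units on $C_0$); you mention differential closure only in passing and for a different purpose.

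Second, and more seriously, the inductive step collapses. Even granting the commutation of weighted transform with restriction to $C_i$ (which is fine here, since $C_i$ is regular, is not contained in the exceptional divisor, and $I(E)\mathcal{O}_{C_{i+1}}$ is the maximal ideal of the point), you still need $\Sing\bigl((\G^{(n+1)}_i)_{|C_i}\bigr)=\Sing(\G^{(n+1)}_i)\cap C_i$ for $i\geq 1$, and you propose to obtain it ``by applying the first part's argument''. But transforms of a differentially closed algebra are in general no longer differentially closed, so the only argument that proves the base case is unavailable at later steps; this is exactly the delicate point (of Giraud-lemma/maximal-contact type, and the problematic one in positive characteristic) that the Proposition must address. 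The paper's proof is designed to avoid it: it never restricts the transforms. It works upstairs with $\mathcal{H}^{(n+1)}=\G^{(n+1)}_0\odot\mathcal{C}_0$, whose transforms automatically satisfy $\Sing(\mathcal{H}^{(n+1)}_i)=\Sing(\G^{(n+1)}_i)\cap C_i$, uses the smooth retraction $\sigma:\tilde{V}^{(n+1)}_0\to C_0$, and proves at the completion --- this is where differential closedness is actually spent --- the decomposition $\mathcal{H}^{(n+1)}=\sigma^{*}\bigl(i^{*}(\G^{(n+1)}_0)\bigr)\odot\mathcal{C}_0$ of (\ref{descomposicion_retraccion}); this product structure, preserved along the admissible sequences, is what identifies the restriction $i^{*}(\G^{(n+1)}_0)$ with an algebra of contact. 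To repair your argument you would have to supply a substitute for your first step at every stage $i$, e.g.\ by proving and transporting such a decomposition, which is essentially the paper's route.
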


\begin{proof}

We use the notation of \ref{notacion_setting} and the line of argument used in   \cite[Proposition 4.4]{Br_E_P-E}. 

By construction $C_0\cong\Spec(K[[t]])$ via the arc $\tilde{\Delta}_0$ (\ref{diag:triangle}).
On the other hand, from the definition of $\tilde{V}_{0}^{(n+1)}$ (see (\ref{def_v}))  we have that the natural morphism 
$\tilde{V}_{0}^{(n+1)}\to\Spec(K[[t]])$ is smooth.
So that there is a smooth retraction $$\sigma:\tilde{V}_{0}^{(n+1)}\to C_0.$$
Denote by $i:C_0\to \tilde{V}_{0}^{(n+1)}$   the inclusion morphism.
The restriction of ${\mathcal G}^{(n+1)}_0$ is the pull back $i^{\ast}({\mathcal G}^{(n+1)}_0)$ in $\mathcal{O}_{C_0}$.

Now set
$$\mathcal{H}^{(n+1)}={\mathcal G}^{(n+1)}_0\odot \mathcal{C}_0.$$
Note that $\Sing(\mathcal{H}^{(n+1)})\subset C_0$ and this inclusion is stable by any local sequence.
This means that the algebra $\mathcal{H}^{(n+1)}\cap \mathcal{O}_{C_0}[W]$ is an algebra of contact of $\varphi$, according to Definition~\ref{def:AlgContact}.

Finally, since $\mathcal{G}^{(n)}$ is a differential Rees algebra, it can be  checked, at the completion of the regular local ring ${\mathcal O}_{\tilde{V}_{0}^{(n+1)}}$,
that
\begin{equation}
\label{descomposicion_retraccion}
\mathcal{H}^{(n+1)}=\sigma^{\ast}\left(i^{\ast}\left({\mathcal G}^{(n+1)}_0\right)\right)\odot \mathcal{C}_0
\end{equation}
from where the result follows.

\end{proof}

\begin{Rem} \label{justificacion} In the following lines we explain the meaning of   Proposition \ref{prop:eq_amalgama_ordenes} and give an explicit expression to compute the order of contact.    With the same notation and setting as in \ref{notacion_setting}, suppose $\Gn =\mathcal{O}_{V^{(n)},\xi }[g_1W^{c_1},\ldots ,g_{s}W^{c_s}]$ is a differential Rees algebra representing the multiplicity of $X$ locally in an (\'etale) neighborhood of $\xi $ in  $\Vn$. Note that  ${\G^{(n+1)}_0}$ is nothing but the extension  
	of $\Gn$ to $\tilde{V}_0^{(n+1)}=\mathrm{Spec}(\mathcal{O}_{V^{(n)},{\xi} }\otimes _{k} K[[t]])_{\tilde{\xi _0}}$. Acording to Proposition \ref{prop:eq_amalgama_ordenes}, the restriction of  ${\G^{(n+1)}_0}$ to $C_0$ is an algebra of contact. 	
	Now, going back to diagram (\ref{diag:triangle}), we get another commutative diagram, 
	\begin{equation}\label{diag:triangle_1}
	\xymatrix@C=0.5pc@R=1pc{\mathcal{O}_{V^{(n)},{\xi} } \ar[d] \ar[rrr]  & & & \mathcal{O}_{V_0^{(n+1)},{\xi _0}}  \ar[rrrrrrr]  \ar[d] & & & & & & & (\mathcal{O}_{V^{(n)},{\xi} }\otimes _{k} K[[t]])_{\tilde{\xi _0}} \ar[d] \ar[rrrrd]^{\tilde{\Delta}_0} &  & & &  \\
		\mathcal{O}_{X^{(d)},\xi } 
		\ar[rrr] & & & \mathcal{O}_{X_0^{(d+1)},{\xi _0}} \ar[rrrrrrr] 
		& & & & & & & (\mathcal{O}_{X^{(d)},{\xi}}\otimes _{k} K[[t]])_{\tilde{\xi _0}} \ar[d]
		\ar[rrrr]^{\tilde{\Gamma}_0} & & &  & K[[t]] \\ 
	 & & &  
		& & & & & & & (\mathcal{O}_{{C_0},{\tilde{\xi}_0}}),
		\ar[rrrru]_{\Psi_0} & & &  &   
		}
	\end{equation}
because the arc $\tilde{\Delta}_0$ (induced by $\Gamma_0$  defined in (\ref{gamma0}))	factorizes through ${\mathcal O}_{C_0}$  (see (\ref{eq:h_i_arc}).  
Now the restriction of ${\mathcal G}^{(n+1)}_0$ to $\mathcal{O}_{{C_0},{\tilde{\xi}_0}}$ is just the image of $\Gn$ in $\mathcal{O}_{{C_0},{\tilde{\xi}_0}}[W]$. 

On the other hand,   the image of the maximal ideal in 
$\mathcal{O}_{{C_0},{\tilde{\xi}_0}}$ via $\Psi_0$ is $\langle t\rangle\subset K[|t|]$ (see (\ref{parameters}) and (\ref{eq:h_i_arc})).  Therefore, the order of the image of $\Gn$ in $\mathcal{O}_{{C_0},{\tilde{\xi}_0}}[W]$ (i.e., the order of he algebra of contact at $\tilde{\xi}_0\in C_0$)  is the same as the order at $\langle t\rangle$ of $\tilde{\Delta}_0(\G_0^{(n+1)})=\varphi(\Gn)\subset K[|t|][W]$  (see (\ref{descomposicion_retraccion})). 
As a consequence,  the order of contact of $\varphi$ with $\Mm$ can be rewritten   as:  
\begin{equation}
\label{r_escrito}
r_{X,\varphi }=\mathrm{ord}_{t}(\varphi (\G ))\in \mathbb{Q}\mbox{.}
\end{equation}
And the normalized version of (\ref{formula_r}) is:   
\begin{equation}
\bar{r}_{X,\varphi }=\frac{\mathrm{ord}_{t}(\varphi (\G ))}{\nu_t(\varphi )}\in \mathbb{Q}\mbox{.}
\end{equation}

\end{Rem}

\begin{Prop}\label{thm:r_elim_amalgama}\cite[Proposition 4.11]{Br_E_P-E} 
Let $X$ be a variety defined over a perfect field $k$,  let $\xi $ be a point in $\Mm $ and let $\varphi $ be an arc in $X$ through $\xi $. Then
\begin{equation}
\rho _{X,\varphi }=\lfloor r_{X,\varphi }\rfloor \mbox{.}
\end{equation}
That is, the persistance of $\varphi $ in $X$ equals the integral part of the order of contact of $\varphi $ with $\Mm $.
\end{Prop}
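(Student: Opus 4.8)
The plan is to reduce the statement to a comparison between two rational numbers: the persistance $\rho_{X,\varphi}$, which counts the blow ups directed by $\varphi$ needed to lower the Nash multiplicity below $m$, and the order of contact $r_{X,\varphi}$, which by Remark~\ref{justificacion} can be computed as $\mathrm{ord}_t(\varphi(\G))$ for $\G=\Gn$ a differential Rees algebra representing the multiplicity of $X$. By Remark~(i) following Definition~\ref{def:r}, lowering the Nash multiplicity along $\varphi$ below $m$ is equivalent to resolving the algebra of contact $\mathcal H$, and $\rho_{X,\varphi}$ is the number of induced transformations of $\mathcal H$ (coming from the point blow ups in (\ref{intro:diag:Nms}), equivalently (\ref{diag:perm_seq2})) that are needed until $\Sing(\mathcal H_i)=\emptyset$. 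So the task becomes: track how the order of the algebra of contact evolves under this very specific sequence of blow ups, all of which have center a single (closed) point lying on the regular curve $C_i=\Sing(\mathcal C_i)$.

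First I would observe that, since all the centers $\xi_i$ lie on the regular curve $C_i$ and each $\tilde V_i^{(n+1)}\leftarrow \tilde V_{i+1}^{(n+1)}$ is the blow up at the point $\xi_i$, the curve $C_i$ meets the exceptional divisor $E_{i+1}$ transversally at the point $\xi_{i+1}$; hence $I(E_{i+1})$ restricted to $C_{i+1}\cong\Spec(K[[t]])$ is exactly $\langle t\rangle$. This is the key local computation: under the transformation law (\ref{eq:transf_law}), if an element $gW^c$ of $\mathcal H_i$ has $\mathrm{ord}_t$ of its restriction to $C_i$ equal to $\nu$, then after blowing up at $\xi_i$ (a point of $\Sing(\mathcal H_i)$, so $\nu\ge c$), the transform $g_1W^c$ has $\mathrm{ord}_t$ of its restriction to $C_{i+1}$ equal to $\nu-c$. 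Taking the minimum over generators (and using that order of a Rees algebra is computed as such a minimum, which is preserved along the sequence by the weak‑equivalence remarks and Corollary~\ref{cor:ord_weak_eq}), we get the recursion
\begin{equation*}
\mathrm{ord}_{\xi_{i+1}}(\mathcal H_{i+1}) = \mathrm{ord}_{\xi_i}(\mathcal H_i) - 1,
\end{equation*}
valid as long as $\xi_i\in\Sing(\mathcal H_i)$, i.e. as long as $\mathrm{ord}_{\xi_i}(\mathcal H_i)\ge 1$. Since $\mathrm{ord}_{\xi_0}(\mathcal H_0)=r_{X,\varphi}$ by definition (Definition~\ref{def:r} together with Remark~\ref{justificacion}), it follows that $\mathrm{ord}_{\xi_i}(\mathcal H_i)=r_{X,\varphi}-i$ for all $i\le \lfloor r_{X,\varphi}\rfloor$, and $\mathcal H_i$ has empty singular locus exactly when $r_{X,\varphi}-i<1$, i.e. at $i=\lfloor r_{X,\varphi}\rfloor$ (using that $r_{X,\varphi}\ge 1$ because $\xi\in\Mm$ forces the algebra of contact to be singular at $\xi_0$). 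Hence $\rho_{X,\varphi}=\lfloor r_{X,\varphi}\rfloor$.

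The main obstacle I anticipate is justifying the order recursion cleanly, i.e. that the order of the algebra of contact drops by exactly $1$ at each step and not more. One has to be careful that the minimum defining $\mathrm{ord}_{\xi_i}(\mathcal H_i)$ is actually realized, after transformation, by the transform of the same generator — or rather, that passing to the (weakly equivalent) differential or integrally closed hull does not change the order, so that the computation via $\varphi(\G)$ in Remark~\ref{justificacion} is legitimate at every stage; this is where the equality $\ord_t(\varphi(\G))=\ord_t(\varphi(\overline{\G}))$ from \ref{entero_arcos} and the order‑preservation under weak equivalence (Corollary~\ref{cor:ord_weak_eq}) do the work. A second, more bookkeeping point is to make sure the transversality of $C_i$ with the exceptional divisor persists at each step — but this is automatic because $C_i$ is the strict transform of a regular curve and the center is always a single point on it, so $C_{i+1}$ maps isomorphically down and $I(E_{i+1})|_{C_{i+1}}=\langle t\rangle$. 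Once these points are in place, the identity $\rho_{X,\varphi}=\lfloor r_{X,\varphi}\rfloor$ is immediate, and since this is exactly \cite[Proposition 4.11]{Br_E_P-E} whose proof is characteristic‑free, I would simply recall that argument, noting that the only inputs — Remark~\ref{justificacion}, the transformation law, and weak equivalence — all hold over an arbitrary perfect field.
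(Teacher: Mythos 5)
Your proposal is correct and follows essentially the same argument as the paper's source for this statement (the paper simply cites \cite[Proposition 4.11]{Br_E_P-E}, whose proof is exactly this bookkeeping): since the strict transform of the graph curve $C_i$ meets each exceptional divisor with multiplicity one, the normalized $t$-order of the (transform of the) contact algebra along $C_i$ drops by exactly $1$ at each blow up directed by $\varphi$, so the singular locus first empties at step $\lfloor r_{X,\varphi}\rfloor$, which by the remark after Definition \ref{def:AlgContact} is precisely $\rho_{X,\varphi}$. The only cosmetic point is that the exact drop by $1$ follows directly from the transformation law (\ref{eq:transf_law}) restricted to $C_i\cong\mathrm{Spec}(K[[t]])$, without needing Corollary \ref{cor:ord_weak_eq}, which is relevant only for the well-definedness of $r_{X,\varphi}$.
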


\vspace{0.2cm}

\section{Nash multiplicity sequences and Hironaka's order function} \label{Nash_Hironaka}

\vspace{0.2cm}

The results obtained in \cite{Br_E_P-E} showed that, for varieties defined over fields of characteristic zero, the invariant $\ord _{\xi }^{(d)}(X)$ at a point $\xi \in \Mm $ can be read in the space of arcs of $X$. More precisely: given   $\varphi :\mathrm{Spec}(K[[t]])\longrightarrow X$, centered at $\xi $, one can consider the family of arcs given as $\varphi _n=\varphi \circ i_n$ for $i>1$, where $i_n^*:K[[t]]\longrightarrow K[[t^n]]$ maps $t$ to $t^n$. Then:
$$\bar{r}_{X,\varphi }=\frac{1}{\nu_t(\varphi )}\cdot \lim _{n\rightarrow \infty }\frac{\rho _{X,\varphi _{n}}}{n}\mbox{,}$$
and hence
\begin{equation}
\label{limite_ord_d}
\mathrm{ord}_{\xi }^{(d)}(X)=\mathrm{inf}_{\varphi }\left( \frac{1}{\nu_t(\varphi )}\cdot \lim _{n\rightarrow \infty }\frac{\rho _{X,\varphi _{n}}}{n} \right) \mbox{,} 
\end{equation} 
where $\varphi $ runs over all arcs in $X$ centered at $\xi $ which are not contained in $\Mm $, and the infimum is, in fact, a minimum (see Definition \ref{def:r} and Remark \ref{justificacion}). This is a consequence of the following Theorem:

\begin{Thm}  \label{Main_Theorem}
Let $X$ be an algebraic variety of dimension $d$ defined over a perfect field $k$, and let $\xi $ be a point in $\Mm $. Then:
$$\mathrm{inf} \Phi _{X,\xi }=\mathrm{min} \Phi _{X,\xi }=\ord _{\xi }^{(d)}(X)\mbox{.}$$
\end{Thm}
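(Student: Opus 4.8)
The strategy is to reduce the statement to a computation about the elimination algebra $\Gd$ in dimension $d$, using the explicit description $r_{X,\varphi}=\mathrm{ord}_t(\varphi(\Gn))$ from (\ref{r_escrito}) together with the finiteness results relating $\Gn$ to $\Gd$ from \ref{eliminacion_finita}. First I would fix a local presentation $(\Vn,\Gn)$ of the multiplicity at $\xi$ in the setting of \ref{PresFinita} and \ref{eliminacion_finita}, so that $\Gn=\Diff(R[f_1W^{m_1},\ldots,f_{n-d}W^{m_{n-d}}])$ with $R=S[x_1,\ldots,x_{n-d}]$, the $f_i$ monic of degree $m_i$ in $x_i$, and $\Gd=\Gn\cap S[W]$ an elimination algebra with $\ord_{\xi}^{(d)}(X)=\ord_{\beta(\xi)}(\Gd)$. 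Given an arc $\varphi\in\mathcal L(X)_\xi$ whose generic point is not in $\Mm$, composing with the finite projection $\alpha\colon X\to\Vd=\Spec(S)$ yields an arc $\gamma=\alpha\circ\varphi$ in $\Vd$ centered at $\beta(\xi)$, and by (\ref{r_escrito}) and Remark \ref{justificacion} one has $\bar r_{X,\varphi}=\mathrm{ord}_t(\varphi(\Gn))/\nu_t(\varphi)$.

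The heart of the argument is the two inequalities. For $\min\Phi_{X,\xi}\geq\ord_\xi^{(d)}(X)$: since $\Gd\subset\Gn_{|B}$ (after extending scalars to $B$) and $\varphi$ factors through $\Spec(B)\to K[[t]]$, one gets $\mathrm{ord}_t(\varphi(\Gn))=\mathrm{ord}_t(\varphi(\Gn_{|B}))\geq\mathrm{ord}_t(\gamma(\Gd))$; and since $\gamma$ is an arc in $\Vd$ centered at $\beta(\xi)\in\Sing(\Gd)$, the definition of Hironaka's order gives $\mathrm{ord}_t(\gamma(\Gd))\geq\nu_t(\gamma)\cdot\ord_{\beta(\xi)}(\Gd)$. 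One must check $\nu_t(\gamma)=\nu_t(\varphi)$ (or $\geq$), which follows from Zariski's formula ingredients in \ref{PresFinita}: $\mathfrak m_{\beta(\xi)}$ generates a reduction of $\mathfrak m_\xi$, so the two orders coincide. This yields $\bar r_{X,\varphi}\geq\ord_\xi^{(d)}(X)$ for every $\varphi$, hence $\inf\Phi_{X,\xi}\geq\ord_\xi^{(d)}(X)$.

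For the reverse inequality — and the fact that the infimum is attained — I would construct an explicit arc realizing the value. By Remark \ref{nuevo}, $\ord_{\beta(\xi)}(\Gd)=\min_i\ord_{\beta(\xi)}(\G_i^{(d)})$, so it suffices to handle one of the hypersurface factors and then observe the amalgamated algebra behaves well. Choosing a curve (arc) $\gamma_0$ in $\Vd$ through $\beta(\xi)$ along which $\Gd$ attains its order — i.e., with $\mathrm{ord}_t(\gamma_0(\Gd))=\nu_t(\gamma_0)\cdot\ord_{\beta(\xi)}(\Gd)$, which exists because the order of a Rees algebra at a point is computed as an infimum over such ``tangent'' directions and for a finitely generated algebra the infimum is a minimum — I would lift it to an arc $\varphi$ in $X$ via the finite morphism $\alpha$ (using that $\alpha$ is finite, a formal curve in $\Vd$ lifts, after possibly a finite extension of $K$, to a formal curve in $X$ through $\xi$, and the generic point is not in $\Mm$ since $\gamma_0$ is not contained in $\Sing(\Gd)$). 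Then the finiteness of $\Gd\subset\Gn_{|B}\cap S[W]$ together with the equality (\ref{igualdad}) $\mathrm{ord}_t(\varphi(\G))=\mathrm{ord}_t(\varphi(\overline{\G}))$ — applied to $\G=\Gd$ and its integral closure, which agrees with that of $\Gn_{|B}\cap S[W]$ — forces $\mathrm{ord}_t(\varphi(\Gn))=\mathrm{ord}_t(\gamma_0(\Gd))$, giving $\bar r_{X,\varphi}=\ord_\xi^{(d)}(X)$.

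The main obstacle I expect is the careful bookkeeping around the finiteness of the extensions in \ref{eliminacion_finita} and making the lifted arc genuinely realize equality: one needs that passing from $\Gn$ down to $\Gd$ via $\cap S[W]$ does not change $\mathrm{ord}_t$ along the chosen arc, which is exactly where Proposition \ref{finita_elim} (finiteness of $\Gd\subset\Gn_{|B}\cap S[W]$) combines with the integral-closure-insensitivity of $\mathrm{ord}_t$ proved in \ref{entero_arcos}. A secondary delicate point is controlling $\nu_t$ under $\alpha$ and under the lift — ensuring the normalization by $\nu_t(\varphi)$ matches on both sides — which I would settle using points (1)--(3) of Zariski's multiplicity formula recalled in \ref{PresFinita}. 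Everything else (semicontinuity, the $\odot$ behaving as $\min$ on orders, stability of elimination under the induced local sequence) is already packaged in the cited results.
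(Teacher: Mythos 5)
Your overall route is the same as the paper's: fix the local presentation of \ref{PresFinita}, write $\bar r_{X,\varphi}=\mathrm{ord}_t(\varphi(\Gn))/\nu_t(\varphi)$ as in (\ref{r_escrito}), compare with the elimination algebra $\Gd$ through the finite projection, and then produce a minimizing arc in $\Vd$ and lift it through the finite morphism. But the justification of your key lower-bound step fails as written. You claim that the inclusion $\Gd\subset\Gn_{|_B}$ yields $\mathrm{ord}_t(\varphi(\Gn_{|_B}))\geq \mathrm{ord}_t(\gamma(\Gd))$. The inclusion alone gives exactly the opposite: if $\mathcal{G}_1\subset\mathcal{G}_2$ then each graded piece of $\mathcal{G}_1$ lies in that of $\mathcal{G}_2$, so $\mathrm{ord}_t(\varphi(\mathcal{G}_1))\geq\mathrm{ord}_t(\varphi(\mathcal{G}_2))$, i.e. $\mathrm{ord}_t(\varphi(\Gn_{|_B}))\leq \mathrm{ord}_t(\gamma(\Gd))$ --- an upper bound, useless for bounding $\bar r_{X,\varphi}$ from below. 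The inequality you want is true, but only because the extension $\Gd\subset\Gn_{|_B}$ is \emph{finite} (\ref{eliminacion_finita}, Proposition \ref{finita_elim}), hence both algebras have the same integral closure over $B$, and by \ref{entero_arcos} the order along any arc is insensitive to integral closure, giving the equality $\mathrm{ord}_t(\varphi(\Gn))=\mathrm{ord}_t(\gamma(\Gd))$. You do invoke precisely this combination (Proposition \ref{finita_elim} plus \ref{entero_arcos}), but only at the end, for making the lifted arc realize equality; in the paper it is the engine of the general lower bound and must be placed there. With that repositioned, the rest of your first inequality runs as in the paper, including the use of the reduction $\mathfrak{m}_{\beta(\xi)}B_{\mathfrak{m}_{\xi}}$ of $\mathfrak{m}_{\xi}$ to identify $\nu_t(\varphi)$ with $\nu_t(\gamma)$.

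For the attainment part your plan coincides with the paper's, but two points are stated too loosely to count as a proof. First, the existence of an arc in $\Vd$ computing $\ord_{\beta(\xi)}(\Gd)$ is not a formal consequence of ``the infimum is a minimum for finitely generated algebras'': the paper builds it explicitly, choosing $gW^l\in\Gd$ with $\nu_{\beta(\xi)}(g)/l=\ord_{\beta(\xi)}(\Gd)$, sending a regular system of parameters $y_i\mapsto u_i t^{\alpha}$, and choosing the units $u_i$ so that $\mathrm{in}_{\beta(\xi)}(g)(u_1,\ldots,u_d)\neq 0$, which may require an \'etale extension of the residue field. Second, the lift through the finite morphism is done valuation-theoretically (extend the valuation of the arc to the fraction field of $B/J$ for a prime $J$ dominating the ideal of the closure of its generic point, take a dominating DVR and complete); the ramification index $N$ of this extension then cancels in the quotient $\mathrm{ord}_t/\nu_t$, which is what preserves the normalized order of contact --- your appeal to Zariski's multiplicity formula covers the reduction of maximal ideals but not this cancellation, so it should be made explicit.
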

Before giving the proof of the Theorem  (which is detailed in  \ref{proof_theorem} below) let us make a few remarks about the result. 

\begin{Rem}  When $k$ is a perfect field of positive characteristic, by Theorem \ref{Th:PresFinita} there is a local presentation of the multiplicity function in an (\'etale) neighborhood  of a point $\xi\in \Mm$. This is given by some Rees algebra $\Gn$ defined in some smooth scheme $\Vn$ over $k$.    From this information, the invariant $\ord _{\xi }^{(d)}(X)$ is defined (see \ref{hironaka_order_d}).  However, as indicated in \ref{hironaka_order_d}, this number  does not suffice to construct a simplification of the multiplicity of $X$: it is just too coarse. 
From this perspective, the output of Theorem \ref{Main_Theorem} gives us: 
\begin{enumerate}
\item[(i)] A clue about the geometrical (intrinsic) meaning of the rational number $\ord _{\xi }^{(d)}(X)$ (see Remark \ref{significado}) and at the same time a possible explanation  about why this number shows up when trying to find a resolution. Example \ref{ejemplo_car} illustrates this idea.
\item[(ii)]  A hint to keep looking for invariants that can help refining $\ord _{\xi }^{(d)}(X)$; maybe by looking  at suitable arcs in  ${\mathcal L}(X)_{\xi}$, or maybe one can explore the use of Nash multiplicity sequences in resolution.
\end{enumerate}   
\end{Rem}


\begin{Parrafo} \label{proof_theorem} {\em Proof of Theorem \ref{Main_Theorem}}  First we recall the definition of Hironaka's order function in dimension $d$ at a point $\xi\in \Mm$, $\ord _{\xi }^{(d)}(X)$.  By Theorem \ref{Th:PresFinita}, in some (\'etale) neighborhood of $\xi$ there is  an embedding of $X$ in an $n$-dimensional smooth scheme $V^{(n)}$ together with a (differential) Rees algebra $\Gn$ that represents the maximum multiplicity in a neighborhood of $\xi$. By the arguments in \ref{eliminacion_finita},  $\tau_{\Gn, \xi}\geq (n-d)$, and we can construct a $\Gn$-admissible projection to some $d$-dimensional smooth scheme $V^{(d)}$, $$\beta: \Vn\to \Vd$$ together with an elimination algebra $\Gd\subset \Ovd[W]$. By   \ref{EliminationProperties} (1), $\Sing(\Gn)$ is homeomorphic to $\Sing(\Gd)$, and then Hironaka's order function in dimension $d$ is defined as: 
	$$\ord _{\xi }^{(d)}(X)= \ord^{(d)}_{\xi}(\Gn)=  \ord_{\beta(\xi)}\Gd.$$
As indicated in \ref{hironaka_order_d}, this number does not depend on the choice of the $\Gn$-admissible projection, and it neither does on the choice of the embedding $X\subset \Vn$ or the Rees algebra $\Gn$. 

Thus, to show the inequality  
\begin{equation} \label{eq:Desigualdad}
\ord _{\xi }^{(d)}(X)\leq  \inf \Phi _{X,\xi }, 
\end{equation}
(see Definition \ref{def:r}) we will choose a suitable local presentation of the multiplicity and a particular smooth projection to a $d$-dimensional smooth scheme.

Since the statement of the Theorem is local, we may assume that  $X=\Spec(B)$ is  an affine algebraic variety. Then, using the arguments in \ref{PresFinita},   at a suitable (\'etale) neighborhood of $\xi$ there is an embedding in some smooth $n$-dimensional scheme $\Vn=\text{Spec}(S[x_1,\ldots, x_{n-d}])$ together with a finite morphism from $X$ to a  regular $\Vd=\text{Spec}(S) $ and a  local presentation by the differential  Rees algebra $\Gn$ generated by elements $f_1W^{m_1}, \ldots, f_{n-d}W^{m_{n-d}}$ as in \ref{PresFinita}. Recall that, in addition, $\langle f_1,\ldots, f_{n-d}\rangle\subset {\mathcal I}(X)$, the defining ideal of $X$ in $\Vn$.  
  So we have the following commutative diagram:  
\begin{equation}\label{diagrama_presentacion_1}
\xymatrix{
 S[x_1,\ldots ,x_{n-d}] \ar[r] &     S[x_1,\ldots ,x_{n-d}]  /\langle f_1,\ldots, f_{n-d}\rangle \ar[r] & B \\
 S  \ar[u]^{\beta^*}   \ar[urr] & & 
}
\end{equation}
As indicated in \ref{eliminacion_finita}, the morphism $\beta: \Vn\to \Vd$ is $\Gn$-admissible and hence it defines an elimination algebra 
 $\G^{(d)}=\Gn\cap S[W]$. 
 Now,  
$$\ord _{\xi }^{(d)}(X)=\ord_{\beta(\xi)}\Gd.$$
 By Definition \ref{def:r},
$$\Phi _{X,\xi }=\left\{ \overline{r}_{X,\varphi } \right\} _{\varphi }\subset \mathbb{Q}_{\geq 1}\mbox{,}$$
where for a given arc $\varphi $   in ${\mathcal L}(X)$ with center $\xi$ 
\begin{equation*}
\bar{r}_{X,\varphi }=\frac{\mathrm{ord}_{t}(\varphi (\Gn ))}{\nu_t(\varphi )}\in \mathbb{Q}
\end{equation*}
(see (\ref{r_escrito})). Recall that   if $\varphi: \text{Spec}(K[|t|])\to X$ for some $K\supset k$,  then  $\mathrm{ord}_{t} (\varphi (\Gn ))$ denotes the order of the $K[|t|]$-Rees  algebra at $\langle t\rangle$ while 
$\nu_t(\varphi)$   denotes the usual order of the ideal generated by $\varphi({\mathfrak m}_{\xi})$ at the (regular) local ring $K[|t|]$.   On the other hand, observe that any arc $\varphi$ as before, induces an arc in $\Vn$, which we also denote by $\varphi$, and an arc $\varphi^{(d)}$ in $\Vd$ centered at $\beta(\xi)$ together with a commutative diagram:
$$\xymatrix{R:=S[x_1,\ldots, x_{n-d}]  \ar[r] & B \ar[r]^{\varphi}& K[|t|]\\
 & \ar[ul] S \ar[u]\ar[ur]_{\varphi^{(d)}} & }
$$

Now, since $\Gd\subset \Gn_{|_B}$ is a finite extension of $B$-Rees algebras (see \ref{eliminacion_finita}),  one has by (\ref{entero_arcos}),  
$$\ord_t\varphi(\Gn_{|_B})=\ord_t\varphi(\Gd),$$ 
(note  that $\ord_t\varphi(\Gn )=\ord_t\varphi(\Gn_{|_B})$).   As ${\mathfrak m}_{\xi^{(d)}}B_{{\mathfrak m}_{\xi}}$ is a reduction of ${\mathfrak m}_{\xi}$ (see \ref{PresFinita}), one has   $\nu_t (\varphi ({\mathfrak m}_{\xi}))=\nu_t (\varphi^{(d)}({\mathfrak m}_{\beta(\xi)}))$.   Hence, 
\begin{equation}
\overline{r}_{X,\varphi }=\frac{\ord _{\xi }\varphi (\Gn )}{\nu_t(\varphi)}=\frac{\ord _{\xi }\varphi^{(d)} (\G^{(d)})}{\nu_t (\varphi^{(d)})}. 
\end{equation}
Finally, in general 
 $\ord_t  \varphi^{(d)} (\G^{(d)})\geq \nu_t (\varphi^{(d)})  \cdot \ord_{\xi^{(d)}}(\G^{(d)})$.  Thus
$$\overline{r}_{X,\varphi } =\frac{\ord_t\varphi^{(d)} (\G^{(d)})}{\nu_t(\varphi^{(d)})} \geq \ord_{\beta(\xi)}(\G^{(d)})=\ord _{\xi }^{(d)}(X).$$

To conclude the proof it suffices to show that there is an arc $\varphi\in {\mathcal L}(X)$ for which 
\begin{equation}
\label{igualdad_arcos}
\frac{\ord_t\varphi^{(d)} (\G^{(d)})}{\nu_t(\varphi^{(d)})}=  \ord_{\beta(\xi)}(\G^{(d)})=\ord _{\xi }^{(d)}(X).
\end{equation}

Let us first choose an arc $\tilde{\varphi }^{(d)}$ in $V^{(d)}$ centered at $\beta(\xi)$ for which 
$$\frac{\ord_t\tilde{\varphi}^{(d)} (\G^{(d)})}{\nu_t(\tilde{\varphi}^{(d)})}=  \ord_{\beta(\xi)}(\G^{(d)}).$$
Note that such an arc always exists: first sellect  some element $gW^l \in \G ^{(d)}$ such that 
\begin{equation}
\label{order_G}
\ord_{\beta(\xi)} (\G ^{(d)})=\frac{\nu_{\beta(\xi)} (g)}{l}=\frac{s}{l},
\end{equation}
where $\nu_{\beta(\xi)} (g)$ is the usual order at ${\mathcal O}_{\Vd, {\beta(\xi)}}$. 
And then define an arc $\tilde{\varphi }^{(d)}$  in $\Vd$, by first fixing  a regular system of parameters,    $y_1,\ldots, y_d\in   {\mathcal O}_{\Vd,{\beta(\xi)}}$,   and then passing to the completion: 
$$\begin{array}{rcccl} 
{\mathcal O}_{\Vd, {\mathfrak m}_{\beta(\xi)}}  &  \to  &   \widehat{\mathcal O}_{\Vd, {\mathfrak m}_{\beta(\xi)}}\simeq k'[|Y_1,\ldots, Y_d|] & \to & k'[|t|] \\
y_i  & \mapsto & Y_i & \mapsto & u_i t^{\alpha} 
\end{array}$$  
where     $\alpha \in \mathbb{Z}_{>0}$ and $u_1,\ldots ,u_d$ are suitably chosen  units  in $k'[[t]]$\footnote{Here  $k'$, the residue field at $\beta(\xi)$ may have to be replaced by an \'etale field extension so that condition (\ref{initial_G}) holds.} such that  
\begin{equation}
\label{initial_G}
(\mathrm{in}_{\beta(\xi)}(g))(u_1,\ldots ,u_d)\neq 0 
\end{equation}
where $\mathrm{in}_{\xi^{(d)}}(g)$ denotes the {\em initial part of $g$ at $\xi$}\footnote{If $\nu_{\beta(\xi)}(g)=s$,  then $\mathrm{in}_{\xi^{(d)}}(g)$  denotes  the class of $g$ at $\mathfrak{m}_{\beta(\xi)}^s/\mathfrak{m}_{\beta(\xi)}^{s+1}$;   therefore $\mathrm{in}_{\xi }(g)\in \mathrm{Gr}_{{\mathfrak m}_{\beta(\xi)}}({S_{\mathfrak {m}_{\beta(\xi)}}})\cong k'[Z_1,\ldots ,Z_d]$ is a homogeneous polynomial of degree $s$.}. From the way in which $\tilde{\varphi}$ is defined, 
\begin{equation}
\ord_{\beta(\xi)}(\G^{(d)})\leq \frac{\ord_t\tilde{\varphi}^{(d)} (\G^{(d)})}{\nu_t(\tilde{\varphi}^{(d)})}\leq  \frac{\nu_t\tilde{\varphi}^{(d)} (g)/l}{\nu_t(\tilde{\varphi}^{(d)})}=\frac{\alpha\cdot s/l}{\alpha} =\frac{s}{l}=\ord_{\beta(\xi)}(\G^{(d)}).
\end{equation}

 From this  arc $\tilde{\varphi }^{(d)}$, we will construct an arc $\varphi$  in $X$ centered at $\xi $ whose projection to an arc $\varphi^{(d)}$ in $V^{(d)}$ will give the equality in (\ref{igualdad_arcos}).

The arc $\tilde{\varphi }^{(d)}$ is fat in a closed subvariety $Y\subset V^{(d)}$, which is the closure of its generic point in $\Vd$. Denote by $I(Y)\subset S$ the  ideal defining $Y$ as a subset of $V^{(d)}$, and define $S'=S/I(Y)$. Let $J\subset B$ be some prime ideal dominating $I(Y)$.  Then we have a commutative diagram of finite vertical morphisms, 
\begin{equation*}
\xymatrix{
	B \ar[r] & B'=B/J & \\
	S \ar[u] \ar[r] & S' \ar[u] \ar[r]^{\tilde{\varphi }^{(d)}} & k'[[t]].
}
\end{equation*}
Now, $\tilde{\varphi }^{(d)}$ defines a discrete  valuation $\tilde{v}$ on $K(S')$, the quotient field of $S'$, whose  valuation ring 
$\mathcal{O}_{\tilde{v}}$ contains $S'$.  If $K(B')$ is the quotient field of $B'$, then the extension $K(S') \subset K(B')$ is finite, and $\mathcal{O}_{\tilde{v}}$ is dominated by a finite number of discrete valuation rings in $K(B')$,  all of them dominating $B'$. Denote by   ${\mathcal O}_v$ one of these (discrete) valuation rings, and by $v$ the corresponding valuation. Then  the inclusions, 
$$S'\subset B'\subset {\mathcal O}_v\to \widehat{{\mathcal O}_v}\simeq K_v[|t|],$$ 
define  an arc $\varphi: S\to K_v[|t|]$    that we claim gives the equality in (\ref{igualdad_arcos}). To prove the claim, let   
  $gW^l\in \Gd$ be as in (\ref{order_G}) satisfying (\ref{initial_G}). Now, if the ramification index of $\tilde{v}$ in ${\mathcal O}_v$ is $N\in {\mathbb Z}_{>0}$, then, 
$$\ord_{\beta(\xi)}\Gd\leq \frac{\ord_t\varphi(\Gd)}{\nu_t(\varphi)} \leq   \frac{\nu_t\varphi(g)/l}{\nu_t(\varphi)}=
\frac{v(g{\mathcal O}_v)/l}{v(\mathfrak{m}_{\beta(\xi)}\mathcal{O}_{{v}})}=$$
$$=\frac{N\cdot \tilde{v}(g\mathcal{O}_{\tilde{v}})/l}{N\cdot \tilde{v}(\mathfrak{m}_{\beta(\xi)}\mathcal{O}_{\tilde{v}})}=\frac{\ord_t\tilde{\varphi}^{(d)} (\Gd)}{\nu_t(\tilde{\varphi}^{(d)})}=\ord_{\beta(\xi)}\Gd\mbox{.} $$ \qed

\end{Parrafo}

\begin{Ex}\label{ejemplo_car}
Let $k$ be a perfect field, let $R=k[x,y]$, let $B=k[x,y]/\langle y^2-x^3\rangle$  and let $X=\text{Spec}(B)$. Then $\mm=2$ and $\Mm=\{\xi=(0,0)\}$. 
 Up to integral closure, the differential $R$-Rees algebra representing $\Mm$ is $\G^{(2)}=R[yW, x^2W,  x^3W^2]$ 
if the characteristic is different from 2, and ${\mathcal H}^{(2)}=R[x^2W, (y^2-x^3)W^2]$ if the  characteristic is 2. In both cases the natural inclusion $S=k[x]\subset R=k[x,y]$ is admissible. The elimination algebra for $\G^{(2)}$ is $\G^{(1)}=S[x^2W,  x^3W^2]$, and the one for ${\mathcal H}^{(2)}$ is  ${\mathcal H}^{(1)}=S[x^2W]$. Thus we have two different values for Hironaka's order function in dimension $1=\text{dim}_k(X)$, depending on the characteristic: 
$$\text{ord}^{(1)}_{\xi}(X) =\left\{\begin{array}{ll}
\frac{3}{2} & \text{ if } \text{char}(k)\neq 2; \\
   &   \\
2  & \text{ if } \text{char}(k)= 2. \end{array} \right.
$$
Lowering $\mm$ below 2 takes just one blow up at $\xi$: this is what it takes to resolve both $\G^{(2)}$  and ${\mathcal H}^{(2)}$ (here we forget about the normal crossing conditions because this is not relevant to the example).

\

 When the characteristic is zero, resolving $\G^{(2)}$ is equivalent to resolving $\G^{(1)}$ and the fact that one single blow up is enough is reflected in the value $3/2$: $\G^{(1)}$ is resolved in one step. 
 
 \

When the characteristic is 2, ${\mathcal H}^{(1)}$ somehow {\em exagerates} the image of the singular locus of  ${\mathcal H}^{(2)}$: it takes two blow ups to resolve  ${\mathcal H}^{(1)}$ while  ${\mathcal H}^{(2)}$ is resolved by one. And yet, there is no other $S$-Rees algebra that approximates the image of the singular locus of  ${\mathcal H}^{(2)}$ than  ${\mathcal H}^{(1)}$. Thus: why the value $\text{ord}^{(1)}_{\xi}(X)=2$?

\

Let us  look at the problem from the point of view of the arcs in $X$ with center $\xi$, and consider: 
$$\begin{array}{rrcl}
\varphi: & k[x,y] & \longrightarrow & k[|t|]\\
  & x & \mapsto & t^3\\
   & y & \mapsto & t^2.
   \end{array}$$ 

Now, if we compute the Nash multiplicity sequence of $\varphi$ and the persistance (normalized), we obtain: 
$$\begin{array}{lll}
2=m_0=m_1=m_2>m_3=1;  & \overline{\rho}_{X,\varphi}=\frac{3}{2} & \text{ if } \text{char}(k)\neq 2; \\
&   \\
2=m_0=m_1=m_2=m_3>m_4=1; \    & \overline{\rho}_{X,\varphi}= 2 & \text{ if } \text{char}(k)= 2. \end{array} 
$$
Thus, in the characteristic 2 case, it takes longer to separate the graph of the arc from the maximum multiplicity locus of $X\times {\mathbb A}^1_k$ and the order of ${\mathcal H}^{(1)}$ at the origin is reflecting this fact: this order  cannot take a value   below 2. 
\end{Ex}


\begin{thebibliography}{10}

\bibitem{COA}
C.~Abad, A.~Bravo, and O.E. Villamayor~U.
\newblock Finite morphisms and simultaneous reduction of the multiplicity.
\newblock {\em Preprint, arXiv:1710.01805v2 [math.AG]}, 2018.

\bibitem{Abhy1}
S.~Abhyankar.
\newblock Corrections to ``{L}ocal uniformization on algebraic surfaces over
  ground fields of characteristic {$p\not=0$}''.
\newblock {\em Ann. of Math. (2)}, 78:202--203, 1963.

\bibitem{Abhy2}
S.~Abhyankar.
\newblock Uniformization in {$p$}-cyclic extensions of algebraic surfaces over
  ground fields of characteristic {$p$}.
\newblock {\em Math. Ann.}, 153:81--96, 1964.

\bibitem{B}
A.~Benito.
\newblock The {$\tau$}-invariant and elimination.
\newblock {\em J. Algebra}, 324(8):1903--1920, 2010.

\bibitem{Benito_V}
A.~Benito and O.~E. Villamayor~U.
\newblock Techniques for the study of singularities with applications to
  resolution of 2-dimensional schemes.
\newblock {\em Math. Ann.}, 353(3):1037--1068, 2012.

\bibitem{Bennett}
B.~M. Bennett.
\newblock On the characteristic functions of a local ring.
\newblock {\em Ann. of Math. (2)}, 91:25--87, 1970.

\bibitem{Bhatt}
Bhargav Bhatt.
\newblock Algebraization and {T}annaka duality.
\newblock {\em Camb. J. Math.}, 4(4):403--461, 2016.

\bibitem{B-M}
E.~Bierstone and P.~D. Milman.
\newblock Canonical desingularization in characteristic zero by blowing up the
  maximum strata of a local invariant.
\newblock {\em Invent. Math.}, 128(2):207--302, 1997.

\bibitem{Br_E_P-E}
A.~Bravo, S.~Encinas, and B.~Pascual-Escudero.
\newblock Nash multiplicities and resolution invariants.
\newblock {\em Collectanea Mathematica}, 68(2):175--217, 2017.

\bibitem{Br_G-E_V}
A.~Bravo, M.~L. Garcia-Escamilla, and O.~E. Villamayor~U.
\newblock On {R}ees algebras and invariants for singularities over perfect
  fields.
\newblock {\em Indiana Univ. Math. J.}, 61(3):1201--1251, 2012.

\bibitem{Br_V}
A.~Bravo and O.~Villamayor~U.
\newblock Singularities in positive characteristic, stratification and
  simplification of the singular locus.
\newblock {\em Adv. Math.}, 224(4):1349--1418, 2010.

\bibitem{Br_V2}
A.~Bravo and O.~E. Villamayor~U.
\newblock On the behavior of the multiplicity on schemes: stratification and
  blow ups.
\newblock In {\em The resolution of singular algebraic varieties}, pages
  81--207. Amer. Math. Soc., Providence, RI, 2014.

\bibitem{CJS_prep}
V.~Cossart, U.~Jannsen, and S.~Saito.
\newblock Canonical embedded and non-embedded resolution of singularities for
  excellent two-dimensional schemes.
\newblock {\em arXiv:0905.2191}.

\bibitem{Cos_Pilt1}
V.~Cossart and O.~Piltant.
\newblock Resolution of singularities of threefolds in positive characteristic.
  {I}. {R}eduction to local uniformization on {A}rtin-{S}chreier and purely
  inseparable coverings.
\newblock {\em J. Algebra}, 320(3):1051--1082, 2008.

\bibitem{Cos_Pilt2}
V.~Cossart and O.~Piltant.
\newblock Resolution of singularities of threefolds in positive characteristic.
  {II}.
\newblock {\em J. Algebra}, 321(7):1836--1976, 2009.

\bibitem{Dade}
E.~D. Dade.
\newblock {\em Multiplicity and monoidal transformations}.
\newblock Princeton University, 1960.
\newblock Thesis (Ph.D.).

\bibitem{deF_Doc}
T.~de~Fernex and R.~Docampo.
\newblock Terminal valuations and the {N}ash problem.
\newblock {\em Invent. Math.}, 203(1):303--331, 2016.

\bibitem{D_L}
J.~Denef and F.~Loeser.
\newblock Geometry on arc spaces of algebraic varieties.
\newblock In {\em European {C}ongress of {M}athematics, {V}ol. {I}
  ({B}arcelona, 2000)}, volume 201 of {\em Progr. Math.}, pages 327--348.
  Birkh\"auser, Basel, 2001.

\bibitem{E_M04}
L.~Ein and M.~Musta{\c{t}}{\v{a}}.
\newblock Inversion of adjunction for local complete intersection varieties.
\newblock {\em Amer. J. Math.}, 126(6):1355--1365, 2004.

\bibitem{E_M09}
L.~Ein and M.~Musta{\c{t}}{\u{a}}.
\newblock Jet schemes and singularities.
\newblock In {\em Algebraic geometry---{S}eattle 2005. {P}art 2}, volume~80 of
  {\em Proc. Sympos. Pure Math.}, pages 505--546. Amer. Math. Soc., Providence,
  RI, 2009.

\bibitem{E_M_Y}
L.~Ein, M.~Musta{\c{t}}{\u{a}}, and T.~Yasuda.
\newblock Jet schemes, log discrepancies and inversion of adjunction.
\newblock {\em Invent. Math.}, 153(3):519--535, 2003.

\bibitem{E_Hau}
S.~Encinas and H.~Hauser.
\newblock Strong resolution of singularities in characteristic zero.
\newblock {\em Comment. Math. Helv.}, 77(4):821--845, 2002.

\bibitem{E_V_}
S.~Encinas and O.~Villamayor.
\newblock Good points and constructive resolution of singularities.
\newblock {\em Acta Math.}, 181(1):109--158, 1998.

\bibitem{E_V97}
S.~Encinas and O.~Villamayor.
\newblock A course on constructive desingularization and equivariance.
\newblock In {\em Resolution of singularities ({O}bergurgl, 1997)}, volume 181
  of {\em Progr. Math.}, pages 147--227. Birkh\"auser, Basel, 2000.

\bibitem{E_V}
S.~Encinas and O.~Villamayor.
\newblock Rees algebras and resolution of singularities.
\newblock In {\em Proceedings of the {XVI}th {L}atin {A}merican {A}lgebra
  {C}olloquium ({S}panish)}, Bibl. Rev. Mat. Iberoamericana, pages 63--85. Rev.
  Mat. Iberoamericana, Madrid, 2007.

\bibitem{Giraud}
J.~Giraud.
\newblock Contact maximal en caract\'eristique positive.
\newblock {\em Ann. Sci. \'Ecole Norm. Sup. (4)}, 8(2):201--234, 1975.

\bibitem{Hickel05}
M.~Hickel.
\newblock Sur quelques aspects de la g\'eom\'etrie de l'espace des arcs
  trac\'es sur un espace analytique.
\newblock {\em Ann. Fac. Sci. Toulouse Math. (6)}, 14(1):1--50, 2005.

\bibitem{Hir}
H.~Hironaka.
\newblock Resolution of singularities of an algebraic variety over a field of
  characteristic zero. {I}, {II}.
\newblock {\em Ann. of Math. (2) {79} (1964), 109--203; ibid. (2)},
  79:205--326, 1964.

\bibitem{Hir1}
H.~Hironaka.
\newblock Idealistic exponents of singularity.
\newblock In {\em Algebraic geometry ({J}. {J}. {S}ylvester {S}ympos., {J}ohns
  {H}opkins {U}niv., {B}altimore, {M}d., 1976)}, pages 52--125. Johns Hopkins
  Univ. Press, Baltimore, Md., 1977.

\bibitem{Hir2}
Heisuke Hironaka.
\newblock Certain numerical characters of singularities.
\newblock {\em J. Math. Kyoto Univ.}, 10:151--187, 1970.

\bibitem{I2}
S.~Ishii.
\newblock Smoothness and jet schemes.
\newblock In {\em Singularities---{N}iigata--{T}oyama 2007}, volume~56 of {\em
  Adv. Stud. Pure Math.}, pages 187--199. Math. Soc. Japan, Tokyo, 2009.

\bibitem{I}
S.~Ishii.
\newblock Geometric properties of jet schemes.
\newblock {\em Comm. Algebra}, 39(5):1872--1882, 2011.

\bibitem{I_K}
S.~Ishii and J.~Koll\'ar.
\newblock The {N}ash problem on arc families of singularities.
\newblock {\em Duke Math. J.}, 120(3):601--620, 2003.

\bibitem{MR3617780}
Hiraku Kawanoue and Kenji Matsuki.
\newblock Resolution of singularities of an idealistic filtration in dimension
  3 after {B}enito-{V}illamayor.
\newblock In {\em Minimal models and extremal rays ({K}yoto, 2011)}, volume~70
  of {\em Adv. Stud. Pure Math.}, pages 115--214. Math. Soc. Japan, [Tokyo],
  2016.

\bibitem{L-J}
M.~Lejeune-Jalabert.
\newblock Courbes trac\'ees sur un germe d'hypersurface.
\newblock {\em Amer. J. Math.}, 112(4):525--568, 1990.

\bibitem{L-J_Mou_Re}
M.~Lejeune-Jalabert, H.~Mourtada, and A.~Reguera.
\newblock Jet schemes and minimal embedded desingularization of plane branches.
\newblock {\em Rev. R. Acad. Cienc. Exactas F\'\i s. Nat. Ser. A Math. RACSAM},
  107(1):145--157, 2013.

\bibitem{L-J_Re1}
M.~Lejeune-Jalabert and A.~J. Reguera-L\'opez.
\newblock Arcs and wedges on sandwiched surface singularities.
\newblock {\em Amer. J. Math.}, 121(6):1191--1213, 1999.

\bibitem{Lipman3}
J.~Lipman.
\newblock Desingularization of two-dimensional schemes.
\newblock {\em Ann. Math. (2)}, 107(1):151--207, 1978.

\bibitem{Mou4}
H.~Mourtada.
\newblock Jet schemes of rational double point singularities.
\newblock In {\em Valuation theory in interaction}, EMS Ser. Congr. Rep., pages
  373--388. Eur. Math. Soc., Z\"urich, 2014.

\bibitem{Mus1}
M.~Musta{\c{t}}{\u{a}}.
\newblock Jet schemes of locally complete intersection canonical singularities.
\newblock {\em Invent. Math.}, 145(3):397--424, 2001.
\newblock With an appendix by David Eisenbud and Edward Frenkel.

\bibitem{Mus2}
M.~Musta{\c{t}}{\v{a}}.
\newblock Singularities of pairs via jet schemes.
\newblock {\em J. Amer. Math. Soc.}, 15(3):599--615 (electronic), 2002.

\bibitem{Nash}
J.~F. Nash, Jr.
\newblock Arc structure of singularities.
\newblock {\em Duke Math. J.}, 81(1):31--38 (1996), 1995.
\newblock A celebration of John F. Nash, Jr.

\bibitem{Veys}
W.~Veys.
\newblock Arc spaces, motivic integration and stringy invariants.
\newblock In {\em Singularity theory and its applications}, volume~43 of {\em
  Adv. Stud. Pure Math.}, pages 529--572. Math. Soc. Japan, Tokyo, 2006.

\bibitem{V1}
O.~Villamayor.
\newblock Constructiveness of {H}ironaka's resolution.
\newblock {\em Ann. Sci. \'Ecole Norm. Sup. (4)}, 22(1):1--32, 1989.

\bibitem{V2}
O.~Villamayor.
\newblock Patching local uniformizations.
\newblock {\em Ann. Sci. \'Ecole Norm. Sup. (4)}, 25(6):629--677, 1992.

\bibitem{V00}
O.~E. Villamayor.
\newblock Tschirnhausen transformations revisited and the multiplicity of the
  embedded hypersurface.
\newblock {\em Bol. Acad. Nac. Cienc. (C\'ordoba)}, 65:233--243, 2000.
\newblock Colloquium on Homology and Representation Theory (Spanish)
  (Vaquer{\'{\i}}as, 1998).

\bibitem{V07}
O.~Villamayor~U.
\newblock Hypersurface singularities in positive characteristic.
\newblock {\em Adv. Math.}, 213(2):687--733, 2007.

\bibitem{V3}
O.~Villamayor~U.
\newblock Rees algebras on smooth schemes: integral closure and higher
  differential operator.
\newblock {\em Rev. Mat. Iberoam.}, 24(1):213--242, 2008.

\bibitem{V}
O.~E. Villamayor~U.
\newblock Equimultiplicity, algebraic elimination, and blowing-up.
\newblock {\em Adv. Math.}, 262:313--369, 2014.

\bibitem{Vojta}
P.~Vojta.
\newblock Jets via {H}asse-{S}chmidt derivations.
\newblock In {\em Diophantine geometry}, volume~4 of {\em CRM Series}, pages
  335--361. Ed. Norm., Pisa, 2007.

\bibitem{Z-SII}
O.~Zariski and P.~Samuel.
\newblock {\em Commutative algebra. {V}ol. {II}}.
\newblock The University Series in Higher Mathematics. D. Van Nostrand Co.,
  Inc., Princeton, N. J.-Toronto-London-New York, 1960.

\end{thebibliography}
\end{document}